\DeclareMathOperator*{\argmax}{arg\,max}
\newcommand{\pp}{\mathbf{P}}
\newcommand{\bE}{\mathbf{E}}
\newcommand{\bb}{\mathbf{b}}
\newcommand{\bx}{\mathbf{x}}
\newcommand{\calS}{\mathcal{S}}
\newcommand{\calB}{\mathcal{B}}
\newcommand{\bZ}{\mathbf{Z}}
\newcommand{\bY}{\mathbf{Y}}
\newcommand{\by}{\mathbf{y}}
\newcommand{\bv}{\mathbf{v}}
\providecommand{\norm}[1]{\| #1\|}
\newcommand{\x}{\mathbf{x}}
\newcommand{\p}{\mathbb{P}}
\newcommand{\bbN}{\mathbb{N}}
\newcommand{\rel}[1][N]{V^{}_{\mathrm{rel}}(\bm(0),T)}
\newcommand\bA{\mathbf{A}}
\newcommand\ba{\mathbf{a}}
\newcommand\bs{\mathbf{s}}
\newcommand\bzero{\mathbf{0}}
\newcommand\calO{\mathcal{O}}
\newcommand\calE{\mathcal{E}}
\newcommand\calA{\mathcal{A}}
\newcommand\calY{\mathcal{Y}}
\newcommand\calU{\mathcal{U}}
\newcommand\be{\mathbf{e}}
\newcommand\bd{\mathbf{d}}
\newcommand{\expect}[1]{\mathbb{E}\left[#1\right]}
\newcommand{\var}[1]{\mathrm{var}\left[#1\right]}
\newcommand{\proba}[1]{\mathbb{P}\left(#1\right)}
\newcommand{\abs}[1]{\left|#1\right|}
\newcommand{\norme}[1]{\left\| #1 \right\|}
\newcommand{\norminf}[1]{\left\| #1 \right\|_\infty}
\newcommand\bXN{\mathbf{X}^{N}}
\newcommand\bX{\mathbf{X}}
\newcommand\bN{\mathbf{N}}
\newcommand\br{\mathbf{r}}
\newcommand\calN{\mathcal{N}}
\newcommand\rmax{r_{\mathrm{max}}}
\newcommand\calD{\mathcal{D}}
\newcommand\calV{\mathcal{V}}
\newcommand\calX{\mathcal{X}}
\newcommand\piNopt{\pi^{N}_{\mathrm{opt}}}
\newcommand\piN{\pi^{N}}
\newcommand\multi{\mathrm{multi}}
\newcommand\cvd{\xrightarrow[N \rightarrow \infty]{d}}
\newcommand\simd{\stackrel{d}{\sim}}
\newcommand\bc{\mathbf{c}}
\newcommand\bSigma{\mathbf{\Sigma}}
\newcommand\Ndelta{\delta_N}
\newcommand\OlogN{\calO\left(\frac{1}{N^{\log N}}\right)}
\newcommand{\RR}{\mathbb{R}}
\newcommand\tON{\tilde{\mathcal{O}} \left(\frac{1}{N}\right)}
\newcommand{\proj}[2]{\mathrm{Proj}_{#1}\left(#2\right)}
\newcommand{\pidelta}{\Pi_{\Ndelta}(\by^*)}
\newcommand{\pisub}{\Pi_{\text{fluid}}(\by^*)}
\newcommand{\Npic}{{\Pi}^c_{\Ndelta}(\by^*)}
\newcommand{\xini}{\bx_{\mathrm{ini}}}
\newcommand{\pic}{{\Pi}^c_{\tilde{D}}(\by^*)}
\newcommand{\pip}{\pi_{\text{pre}}}
\newcommand{\Ex}[1]{\mathbb{E}\left[#1\right]}
\newcommand{\Var}[1]{\mathrm{Var}\left(#1\right)}
\newcommand{\Cov}[1]{\mathrm{Cov}\left(#1\right)}
\newcommand{\wass}[2]{d_W^{(1)}\left(#1,#2\right)}
\newcommand{\simpXN}{\Delta^S_N}
\newcommand{\simpYN}{\Delta^{2S}_N}
\newcommand{\VarMulti}{U}
\newcommand{\bVarMulti}{\mathbf{U}}
\newcommand{\Xsys}{\bX^N}
\newcommand{\Xdiff}{\tilde{\bX}}
\newcommand{\VNopt}{V^N_{\mathrm{opt}}}
\newcommand{\VNLPres}{V^N_{{\mathrm{fluid}}}}
\newcommand{\VNdiffopt}{\tilde{V}^N_{\mathrm{opt}}}
\newcommand{\VLP}{\overline{V}_{\mathrm{LP}}}
\newcommand{\Yopt}{\bY^{N,*}}
\newcommand{\Ydiff}{\tilde{\bY}}
\newcommand{\Ysys}{\bY^N}
\newcommand{\round}{\mathrm{round}}
\newcommand{\YLP}{\overline{\mathbf{Y}}}
\newcommand{\QLP}{\overline{Q}_{\mathrm{LP}}}
\NewDocumentCommand{\VL}{o}{%
    \IfValueTF{#1}{%
        \texttt{V}^{[#1]}%
    }{%
        \texttt{V}^{[\mathbf{L}_1]}%
    }
}
\theoremstyle{plain}
\newtheorem{lemma}{Lemma}[section]
\newtheorem{claim}{Claim}[section]
\newtheorem{definition}{Definition}[section]
\newtheorem{assumption}{Assumption}[section]
\title{Achieving $\tilde{\mathcal{O}}(1/N)$ Optimality Gap in Restless Bandits through Gaussian Approximation}
\author{
    Chen Yan\thanks{Electrical Engineering and Computer Science Department, University of Michigan, Ann Arbor. Email: \texttt{chenyaa@umich.edu}, \texttt{leiying@umich.edu}} \and
    Weina Wang\thanks{Computer Science Department, Carnegie Mellon University. Email: \texttt{weinaw@cs.cmu.edu}} \and
    Lei Ying\footnotemark[1]
}
\date{}
\begin{document}

\maketitle

\begin{abstract}
We study the finite-horizon Restless Multi-Armed Bandit (RMAB) problem with $N$ homogeneous arms.
Prior work has shown that when an RMAB satisfies a non-degeneracy condition, Linear-Programming-based (LP-based) policies derived from the fluid approximation, which captures the mean dynamics of the system, achieve an exponentially small optimality gap.
However, it is common for RMABs to be degenerate, in which case LP-based policies can result in a $\Theta(1/\sqrt{N})$
\footnote{We adopt standard asymptotic notation throughout this paper. Specifically, for functions $f(N)$ and $g(N)$, we write $f(N) = \mathcal{O}(g(N))$ if there exist positive constants $C$ and $N_0$ such that $|f(N)| \leq C|g(N)|$ for all $N \geq N_0$. Similarly, we write $f(N) = \Omega(g(N))$ if $g(N) = \mathcal{O}(f(N))$, and $f(N) = \Theta(g(N))$ if both $f(N) = \mathcal{O}(g(N))$ and $f(N) = \Omega(g(N))$ hold simultaneously. Additionally, we use $\tilde{\mathcal{O}}(\cdot)$ and $\tilde{\Theta}(\cdot)$ notation to indicate that logarithmic factors are omitted.}
optimality gap per arm.
In this paper, we propose a novel Stochastic-Programming-based (SP-based) policy that, under a uniqueness assumption, achieves an $\tilde{\mathcal{O}}(1/N)$ optimality gap for degenerate RMABs.
Our approach is based on the construction of a Gaussian stochastic system that captures not only the mean but also the variance of the RMAB dynamics, resulting in a more accurate approximation than the fluid approximation.
We then solve a stochastic program for this system to obtain our policy.
This is the first result to establish an $\tilde{\mathcal{O}}(1/N)$ optimality gap for degenerate RMABs.
\end{abstract}

\tableofcontents

\section{Introduction}

The Restless Multi-Armed Bandit (RMAB) problem is an important framework in sequential decision-making, where a decision maker selects a subset of tasks (arms) to work on (pull) at each time step to maximize cumulative rewards, under known model parameters \citep{whittle-restless}. Unlike the classical (restful) bandit \citep{Gittins79banditprocesses}, in the restless variant, the state of each arm evolves stochastically regardless of whether it is pulled. RMABs have been widely applied in domains such as machine maintenance \citep{glazebrook2005index, demirci2024restless}, healthcare resource allocation \citep{mate2020collapsing,mate2021risk}, and target tracking \citep{le2006multi, la2006optimal}, to name a few, where optimal decision-making under uncertainty is critical. 
A general RMAB is PSPACE-hard \citep{Papadimitriou99thecomplexity}, and finding optimal policies is computationally challenging, especially as the number of arms grows.
Recently, there have also been efforts to use deep learning and reinforcement learning to learn heuristic policies for RMABs, such as \citep{nakhleh2021neurwin, xiong2022reinforcement, killian2022restless, xiong2023finite, avrachenkov2024lagrangian}.

In this paper, we focus on the finite-horizon version of the RMAB problem with \( N \) homogeneous arms and horizon \( H \), where each arm follows the same (time-dependent, known) state transition and reward function. While computing the exact optimal policy is impractical, the homogeneity of the model allows for the design of computationally efficient policies. One such class of policies is based on fluid approximation, which transforms the original \( N \)-armed RMAB problem into a Linear Program (LP), and an LP-based policy can be efficiently computed based on the solution to the LP.

\textbf{Optimality gap.} In \citep{hu2017asymptotically}, an LP-based index policy was proposed and achieves an \( o(1) \) optimality gap. This gap was later improved to \( \mathcal{O}(\log N / \sqrt{N}) \) in \citep{ZayasCabn2017AnAO}, and subsequently to \( \mathcal{O}(1/\sqrt{N}) \) in \citep{Brown2020IndexPA}. In the numerical experiments of \citep{Brown2020IndexPA}, it was observed that while this \( \mathcal{O}(1/\sqrt{N}) \) gap appears tight for certain problems, in others the gap converges to zero more rapidly. This empirical observation was theoretically confirmed in \citep{zhang2021restless}, where it was shown that under a \emph{non-degenerate} condition (formally defined in Definition~\ref{def:non-degenerate-condition}), the gap is at a smaller order of \( \mathcal{O}(1/N) \).

\textbf{Non-degenerate condition.} This non-degenerate condition has since become a key assumption in subsequent works. \citep{gast2023linear} shows that, under this condition, the optimality gap becomes exponentially small when the rounding error induced by scaling the fluid approximation by \( N \) is eliminated. Further generalizations of the non-degenerate condition have been made in \citep{brown2023fluid}, extending it to multi-action and multi-constraint RMABs (also known as weakly coupled Markov decision processes). In \citep{zhang2024leveraging}, the non-degenerate condition was further extended to settings with heterogeneous arms.

\textbf{Prevalence of degenerant RMABs.} Despite the central role played by the non-degenerate condition, many  RMAB problems are \emph{degenerate}. Notable examples, which originate from real-world applications, have been discussed in \citep{Brown2020IndexPA, zhang2021restless, brown2023fluid}. Moreover, a numerical study presented in Appendix~\ref{append:proportion-degenerate-and-uniqueness} revealed that a significant proportion (about 50\% for some cases) of randomly generated RMABs are degenerate, and almost all of them satisfy the Uniqueness Assumption~\ref{ass:opt-lp-distance}. This highlights the practical importance of addressing degenerate RMABs.

However, to the best of our knowledge, in all previous works, when an RMAB is \emph{degenerate}, the best known optimality gap is \( \mathcal{O}(1/\sqrt{N}) \). It is widely believed that this upper bound is also order-wise tight under LP-based policies, making it \( \Theta(1/\sqrt{N}) \). We will formally prove this result in Theorem \ref{thm:lower-bounds} by using an example.

These results led us to ask the following central question of this paper:

{\em Does there exist a {computationally efficient} algorithm for degenerate RMABs with an optimality gap order-wise smaller than \( \mathcal{O}(1/\sqrt{N}) \)?}

\textbf{Contributions.} This paper answers the question affirmatively and includes the following results:
\begin{itemize}[leftmargin=1.5em]
  \item We construct a Gaussian stochastic system (see \eqref{eq:problem-formulation-Gaussian}) that more accurately captures the behavior of the $N$-system than the fluid system. Unlike the fluid system, which serves as a first-order approximation to the stochastic $N$-system, the Gaussian stochastic system incorporates both the mean and variance of the $N$-system.

  \item We demonstrate that the SP-based policy obtained from the Gaussian stochastic system (see Algorithm~\eqref{algo:SP-based}) achieves $\tilde{\calO}(1/N)$ optimality gap for degenerate RMABs that satisfy the Uniqueness Assumption~\ref{ass:opt-lp-distance} (see Theorem~\ref{thm:global}). We further prove that for degenerate RMABs without the Uniqueness Assumption, SP-based policy results in $\Omega(1/\sqrt{N})$ improvement per arm compared with a large class of LP-based policies (see Theorem~\ref{thm:improvement}).

  \item We further compliment our main result by presenting a \emph{degenerate} example in Section~\ref{sec:SP-based-policy}, demonstrating in Theorem~\ref{thm:lower-bounds} that not only the LP-based policy has \( \Theta(1/\sqrt{N}) \) optimality gap, but also the LP upper bound has \( \Theta(1/\sqrt{N}) \) gap from the optimal value. This indicates that the LP upper bound, which is a widely used baseline, itself is not tight for degenerate RMABs.
\end{itemize}

A comparison of state-of-the-art results in finite-horizon RMABs with this work is provided in Table~\ref{tab:convergence}.

\begin{table}[h]
    \centering
    \caption{Optimality gap and assumptions in finite-horizon RMABs}
    \label{tab:convergence}
    \begin{tabular}{@{\hskip 2pt}p{0.28\linewidth}@{\hskip 3pt}p{0.22\linewidth}@{\hskip 2pt}p{0.45\linewidth}@{\hskip 2pt}}
        \toprule
        \textbf{Paper}         & \textbf{Gap}                & \textbf{Assumption} \\ 
        \midrule
        \citet{Brown2020IndexPA}     & $\mathcal{O}(1/\sqrt{N})$   & General \\ 
        \citet{gast2023linear}       & $\mathcal{O}(\exp(-CN))$    & Non-degeneracy \\ 
        This Work                 & $\tilde{\calO}(1/N)$          & Degeneracy \& Uniqueness Assumption~\ref{ass:opt-lp-distance}  \\ 
        \bottomrule
    \end{tabular}
\end{table}

\begin{figure}[h]
    \centering
    \begin{subfigure}[b]{0.45\linewidth}
        \centering
        \includegraphics[width=\linewidth]{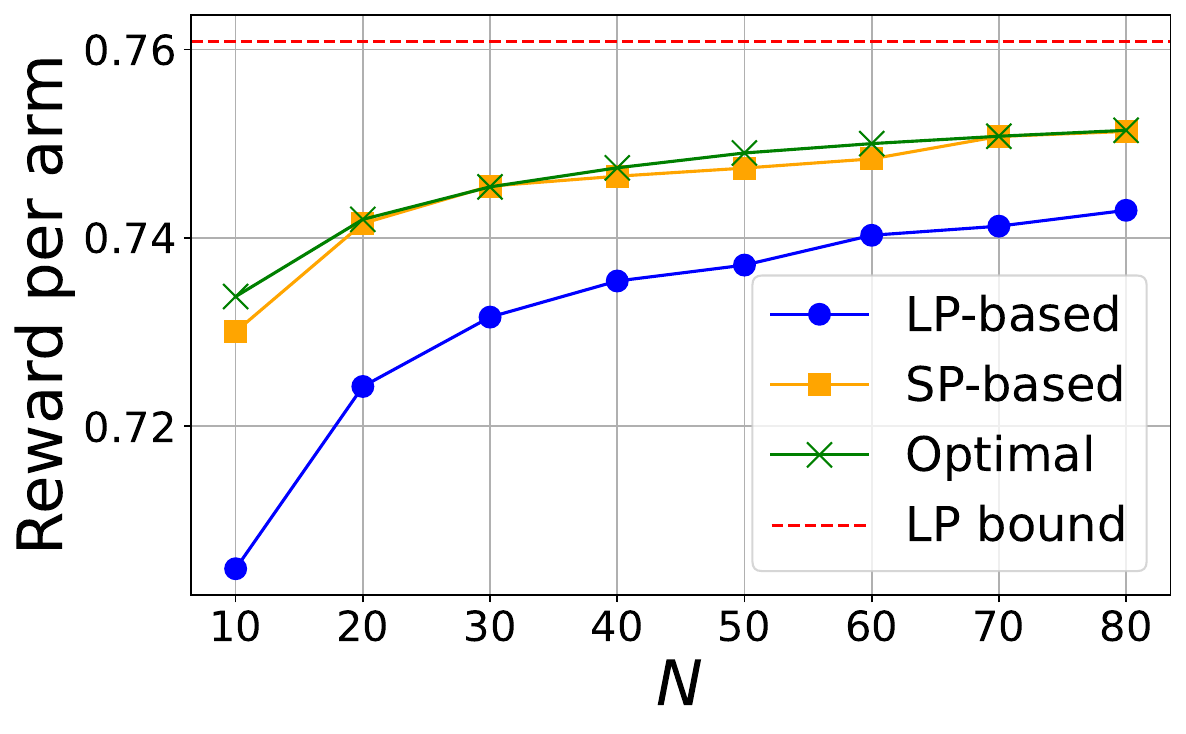}
        \caption*{}
    \end{subfigure}
    \qquad
    \begin{subfigure}[b]{0.45\linewidth}
        \centering
        \includegraphics[width=\linewidth]{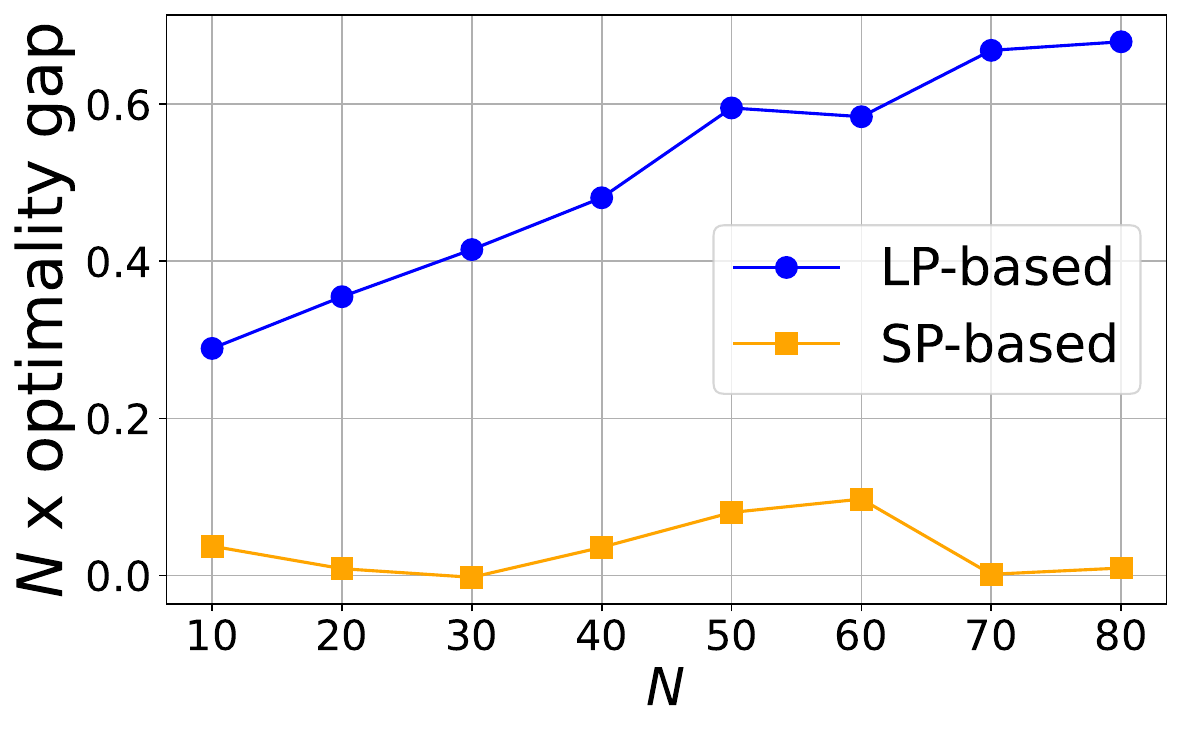}
        \caption*{}
    \end{subfigure}
    \caption{Comparison of LP and SP-based policies on a 2 states 2 steps RMAB example.}
    \label{fig:perf-1}
\end{figure}

As an illustration, consider a degenerate RMAB example with 2 states and a horizon of 2 steps (the details can be found in Section~\ref{sec:example}). Given the small problem size, the optimal policy can be computed exactly through brute-force methods. As shown in Figure~\ref{fig:perf-1}, the performance of the policy obtained by solving the Gaussian stochastic system is already very close to the true optimal value. Considering $N\times$optimality gap (i.e. the total optimality gap of the $N$ arms with respect to the optimal policy), it remains bounded under the SP-based policy 
 as $N$ increases, which confirms the $\tilde{\calO}(1/N)$ optimality gap; while the total optimality gap for the LP-based policy appears to grow unbounded.

\textbf{Additional related work.}
Gaussian approximation, based on the Central Limit Theorem, is a popular approach for approximating stochastic processes. For example, in the heavy-traffic analysis of continuous-time queue theory, such an approximation is called the diffusion approximation because the limiting process, after centering and scaling, becomes a diffusion process. The early works in this line include \cite{Bor_64,Bor_65,IglWhi_70,IglWhi_70_2}. Diffusion approximation in the queueing theory primarily focuses on understanding the convergence or the rate of convergence of the centered and scaled queueing system to the diffusion limit, while this paper uses the Gaussian stochastic system to obtain a near-optimal policy for a decision problem with $N$ homogenous components. Therefore, both the application of the approximation and its analysis are different.

\textbf{Notational convention.} 
Vectors are row vectors by default.
Throughout the paper, we consider three systems: the $N$-armed RMAB system, the fluid system, and the Gaussian stochastic system. As a general convention, variables and functions with ``$\bar{\quad }$'' are for the fluid system, while those with ``$\tilde{\quad }$'' are for the Gaussian stochastic system. The terms ``action'' and ``control'' are used interchangeably in the context of decision-making.

\section{Problem formulation}  \label{sec:model}

\textbf{RMAB model.} We consider the $H$-horizon Restless Multi-Armed Bandit (RMAB) problem with $N$ homogeneous arms. Each arm is modeled as a Markov Decision Process (MDP) with state space \(\calS := \{1, 2, \dots, S\}\) and action space \(\calA := \{0, 1\}\). At each time step $h$ (\(1 \leq h \leq H\)), the decision maker decides which arms to take action $1$, also referred to as the pulling action, subject to the budget constraint that exactly \(\alpha N\) arms should be pulled.
Here $0<\alpha<1$ and we assume \(\alpha N\) is an integer.
After the actions are applied, the \(N\) arms evolve independently.
Specifically, the state transitions from \(\bs \in \calS^N\) to \(\bs' \in \calS^N\) with probability $\pp_h(\bs' \mid \bs, \ba) = \prod_{n=1}^{N} \pp_h (s'_n \mid s_n, a_n)$,
where \((s_n, a_n)\) denotes the state-action pair for the \(n\)-th arm, and \(\pp_h(\cdot \mid \cdot, a)\) is the transition kernel under action \(a\).
For convenience, we refer to this RMAB system with $N$ arms as the \emph{$N$-system}.

At each time step \(h\), the decision maker collects an additive reward $\sum_{n=1}^{N} r_h(s_n, a_n)$, where \(r_h(s, a)\) denotes the reward for the state-action pair \((s, a)\) at time $h$ and is assumed to be nonnegative without loss of generality. The objective is to find a policy \(\pi^N\), which maps the bandit state vector \(\bs_h=(s_{1,h},s_{2,h},\dots,s_{N,h})\) to an action vector \(\ba_h=(a_{1,h},a_{2,h},\dots,a_{N,h})\) for each $h$, that maximizes the total expected reward per arm over the horizon:
\begin{alignat}{2}  
    & \quad \VNopt  :=  \underset{ \piN }{\mathrm{max}} \quad \sum_{h=1}^{H} \frac{1}{N} \sum_{n=1}^{N} \mathbb{E}_{\piN} \left[ r_h(s_{n,h}, a_{n,h} ) \right] \\
    \mathrm{s.t.} & \quad  \sum_{n=1}^N a_{n,h} = \alpha N, \quad 1 \le h \le H. 
\end{alignat}

To reduce the computational complexity, we leverage the fact that the \(N\) arms are homogeneous and aggregate the states of individual arms.
This gives us a simplified representation of the bandit state, which facilitates subsequent analysis and policy design.
Specifically, we represent the bandit state at time $h$ as a vector \(\bX_h =(X_h(s))_{s\in\calS}\in \simpXN\), where each \(X_h(s)\) denotes the fraction of arms in state \(s \in \calS\). 
Here, $\simpXN$ is a discrete subset of \(\Delta^S\), the probability simplex over $\calS$, such that each $\bx \in \simpXN$ satisfies that $N\bx$ has all integer entries.
Similarly, we represent the action at time $h$ as a vector \(\bY_h=(Y_h(s,a))_{s\in\calS,a\in\calA} \in \simpYN\), where each \(Y_h(s, a)\) denotes the fraction of arms in state \(s\) taking action \(a\).
We treat $\bY_h$ as a row vector of length $2S$.
We also write $\bY_h(\cdot,0)=(Y_h(s,0))_{s\in\calS}$ and $\bY_h(\cdot,1)=(Y_h(s,0))_{s\in\calS}$, which are both row vectors of length $S$.

Under the new state and action representation, given an action $\bY_h=\by_h$, the state evolves as
\begin{equation}  \label{eq:rv-Nsys}
  \bX_{h+1}=\frac{1}{N}\sum_{s,a}\mathbf{U}^{(s,a)}_h(\by_h),
\end{equation}
where $\mathbf{U}^{(s,a)}_h(\by_h) \in \mathbb{N}^S$ follows a multinomial distribution $\multi(N y_h(s,a), \pp_h(\cdot \mid s,a))$,
and the $\mathbf{U}^{(s,a)}_h(\by_h)$'s for different $(s,a)$'s are \emph{independent}.

Under the new state and action representation, a policy $\piN$ maps the state $\bX_h$ to an action vector $\bY_h$ for each $h$.
We also rewrite the reward function as a vector \(\br_h=(r_h(s,a))_{s\in\calS,a\in\calA} \in \mathbb{R}^{2S}\).
Suppose the initial state of the $N$-system is \(\bX_1=\xini \in \simpXN\).
Then the objective of the RMAB problem can be reformulated as:
\begin{alignat}{2}  
    & \quad \VNopt(\xini, 1)  =  \ \underset{ \piN }{\mathrm{max}} \quad \sum_{h=1}^{H}  \mathbb{E}_{\piN} \left[ \br_h \bY_h^\top \right] \label{eq:problem-formulation-original} \\
   \hbox{s.t.} & \quad \sum_{s} Y_h(s, 1) = \alpha, \quad 1\le h\le H, \label{eq:budget-constraint} \\
    & \quad \bY_h(\cdot, 0) + \bY_h(\cdot, 1) = \bX_h,\quad
    \bY_h \in \simpYN,\quad 1\le h\le H. \label{eq:positivity-constraint}
\end{alignat}
Here \eqref{eq:budget-constraint} corresponds to the budget constraint, and \eqref{eq:positivity-constraint} ensures that $\bY_h$ is a valid action for state $\bX_h$.

\textbf{Fluid approximation.} 
One classical approach to address the complexity of RMAB is to consider a fluid approximation as in \citep{hu2017asymptotically, ZayasCabn2017AnAO, Brown2020IndexPA, zhang2021restless, gast2023linear}, where the stochastic state transition is replaced by its expected value, leading to a deterministic transition at each step: 
\begin{equation}  \label{eq:f}
  \bx_{h+1}  =  \sum_{s,a}  y_h(s,a) \pp_h(\cdot \mid s,a).
\end{equation}
This fluid system smooths out the stochastic fluctuations in the $N$-system and leads to the following Linear Program (LP), which we refer to as the \emph{fluid LP}: 
\begin{alignat}{2}
    & \quad \VLP(\xini,1) :=  \ \underset{ (\bx_h,\by_h)_{h\in\{1,2,\dots,H\}} }{\mathrm{max}} \quad   \sum_{h=1}^{H}  \br_h \by_h^\top  \label{eq:problem-formulation-LP} \\
\text{s.t.} & \quad  \sum_{s} y_{h} (s,1) = \alpha,\quad 1\le h\le H,\\
&\quad 
\by_h(\cdot, 0) + \by_h(\cdot,1) = \bx_h(\cdot), \quad \by_h \ge \bzero,\quad 1\le h\le H, \\
&\quad \bx_{1} = \xini,
\quad  \bx_{h+1}(\cdot)  = \sum_{s,a} y_{h}(s,a) \mathbf{P}_{h} \left( \cdot \mid s, a \right),\quad 1\le h\le H-1.
\end{alignat}
Let $\bx^*=(\bx_h^*)_{h\in\{1,2,\dots,H\}}$ and $\by^*=(\by_h^*)_{h\in\{1,2,\dots,H\}}$ be an optimal solution to this fluid LP.
Note that this LP can be efficiently solved.
Furthermore, it has been shown that $\VNopt(\xini,1) \le \VLP(\xini,1)$; see, for instance, \citep[Lemma 1]{zhang2021restless} or \citep[Lemma 3]{yan2024optimal}. 
That is, the optimal value of the LP provides an upper bound on the optimal value of the RMAB problem.
Based on the solution to this LP, LP-based policies can be obtained but have  \( \Theta(1/\sqrt{N}) \)  optimality gap per arm in degenerate RMABs, as we pointed out in the Introduction.

\section{Gaussian approximation and SP-based policy}   \label{sec:SP-based-policy}

\subsection{Gaussian stochastic system} 
The performance of LP-based policies is inherently limited by how accurately the fluid system approximates the original $N$-system.
To design policies that outperform the LP-based policies, we construct a Gaussian stochastic system that better approximates the original $N$-system by capturing not only the mean but also the variance of the system.
This Gaussian stochastic system is centered around $\by^*$, an optimal solution to the fluid LP.
We then search for an optimal policy for the Gaussian stochastic system in a neighborhood of $\by^*$, which adjusts $\by^*$ to account for the stochasticity.
This policy is then applied to the $N$-system after properly handling the integer effect.

Specifically, we construct a Gaussian stochastic system with state space $\Delta^S$ and action space $\Delta^{2S}$. 
The system has the same initial state $\xini$ as the $N$-system.
Under action vector $\by_h$, the state of the Gaussian stochastic system at the next time step, \(\Xdiff_{h+1}\), is a random vector of the following form: 
\begin{equation}  \label{eq:X-c}
 \Xdiff_{h+1} = \proj{\Delta^S}{\sum_{s,a} y_h(s,a)\pp_h(\cdot \mid s,a)+\bZ_h/\sqrt{N}},
\end{equation} where $\bZ_h$ is a Gaussian random vector with distribution $\calN(\bzero, \Gamma_h(\by^*_h))$, and $\bZ_h$'s are independent across steps.
The covariance matrix $\Gamma_h(\by^*_h)$ is a constant matrix independent of $N$, with its explicit expression given below.
The projection, $\proj{\Delta^S}{\cdot}$, is onto the simplex $\Delta^S$ under the $\ell_2$-distance.
This projection will not be used with a high probability.
Indeed, we will show in Lemma~\ref{lem:high-proba-bound} in the appendix that $\sum_{s,a} y_h(s,a)\pp_h(\cdot \mid s,a) + \bZ_h/\sqrt{N} \in \Delta^S$ occurs with probability $1 - \calO(1/N^{\log N})$.

We now explain how the Gaussian stochastic system captures the mean and variance of the $N$-system.
Suppose the action at time $h$ is $\by_h$.
Ignoring the projection, we have 
\begin{equation*}
    \Xdiff_{h+1} =\sum_{s,a} y_h(s,a)\pp_h(\cdot \mid s,a) + \bZ_h/\sqrt{N}.
\end{equation*}
The term $\sum_{s,a} y_h(s,a)\pp_h(\cdot \mid s,a)$ is the expectation of the state $\bX_{h+1}$ in the $N$-system, which is the same as the state $\bx_{h+1}$ in the fluid system under action $\by_h$.
The additional term, $\bZ_h/\sqrt{N}$, is random, and we construct it in such a way that its covariance matrix matches that of $\bX_{h+1}$ in the $N$-system if $\by_h=\by_h^*$. In other words, for $(h,s,a) \in [1,H] \times \calS \times \calA$, denote the matrix $\bSigma_h(s,a)$ of size $\abs{\calS} \times \abs{\calS}$, for which the $(i,j)$-th entry is:
\begin{equation}  \label{eq:covariance-matrix}
 \begin{cases}
   - \pp_h(i \mid s,a) \pp_h(j \mid s,a), & \mbox{if } i \neq j \\
   \pp_h(i \mid s,a)  (1 - \pp_h(i \mid s,a)), & \mbox{if } i=j.
 \end{cases}
\end{equation}
Then
\begin{equation}  \label{eq:Gamma-definition}
    \Gamma_{h}(\by^*_h) := \sum_{s,a} y^*_{h}(s,a) \bSigma_{h}(s,a).
\end{equation}
Note that by construction, $\Gamma_{h}(\by^*_h)/N$ is the covariance matrix of the state $\bX_{h+1}$ in the $N$-system if the action at time $h$ is $\by_h^*$.
Furthermore, this term captures the variance of the $N$-system when $\by_h$ is close to the optimal solution $\by_h^*$ of the fluid LP. We remark that it is important to define the covariance matrix based on $\by^*_h$ instead of $\by_h.$ Using $\Gamma_h(\by_h)$ in \eqref{eq:Gamma-definition} would make $\tilde{\bX}_{h+1}$ a more accurate approximation of $\bX_{h+1}$ but would make  $\bZ_h$ action-dependent. The problem becomes much more difficult if $\bZ_h$'s distributions need to be optimized instead of being pre-determined

\subsection{Stochastic-Programming-based (SP-based) policy}

After constructing the Gaussian stochastic system, we search for an optimal policy in the Gaussian stochastic system.
However, we restrict the search to a \emph{neighborhood of $\by^*$}, since the Gaussian stochastic system closely approximates the $N$-system when the state and action of the $N$-system stay close to $\by^*$.
In particular, given a fixed parameter $\Ndelta :=  2 \log N/\sqrt N = \tilde{\Theta}(1/\sqrt{N})$, we define the following policy class: 
\begin{equation}\label{eq:policy-pidelta}
\begin{split}
 \pidelta :=  \big\{ &\pi\colon \forall 1\le h\le H,\;\|\pi(\bx_h,h)-\by^*_h\|_\infty\leq \kappa z_h \Ndelta,\;\hbox{if }  \|\bx_h-\bx^*_h\|_\infty \leq z_h \Ndelta;\\
   & \mspace{25mu}\pi = \pip \hbox{ for a predefined policy }\pip,  \; \hbox{if }  \|\bx_h-\bx^*_h\|_\infty > z_h \Ndelta \big\}.  
\end{split}
\end{equation}
Here $\kappa$ is a positive constant with value $\kappa := \max \left\{ 2 + 6S, \, 3 + 2 \rmax HS/\sigma \right\}$, where $\rmax := \max_{s,a,h} r_h(s,a)$; $\sigma$ is a constant depending on the fluid LP; $z_h$ for $1 \le h \le H$ is a recursive sequence. The explicit expressions for these constants are collected in Appendix~\ref{append:list-of-notations-and-constants}.
We note that all of them are independent of $N$, and can be computed or estimated solely from the fluid LP \eqref{eq:problem-formulation-LP}.

We then formulate the following Gaussian stochastic program (SP):  
\begin{alignat}{2}  
    &  \quad \ \underset{ \pi\in \pidelta}{\mathrm{max}} \quad \sum_{h=1}^{H}  \mathbb{E} \left[ \br_h \tilde\bY_h^\top \right] \label{eq:problem-formulation-Gaussian} \\
    \hbox{s.t.} & \quad \sum_{s} \tilde Y_h(s, 1) = \alpha,\quad 1\le h\le H,\\
    &\quad \tilde \bY_h(\cdot, 0) + \tilde \bY_h(\cdot, 1) = \tilde \bX_h,\quad \tilde \bY_h \geq {\bf 0}, \quad 1\le h\le H,\\
    & \quad \Xdiff_{h+1} = \proj{\Delta^S}{\sum_{s,a} \tilde{Y}_h(s,a)\pp_h(\cdot \mid s,a)+\bZ_h/\sqrt{N}}, \quad 1\le h\le H-1.
\end{alignat}

\begin{algorithm}[t]
  \caption{Stochastic-Programming-based (SP-based) policy}
  \label{algo:SP-based}
  \begin{algorithmic}[1]
    \State \textbf{Input:} An optimal solution $\by^*$ to LP~\eqref{eq:problem-formulation-LP}; constants $z_h$, $\Ndelta$ and $\kappa$; a predefined policy $\pip$
    \If{$\by^*$ is non-degenerate}
        \State Use an LP-based policy
        \State \textbf{Break}
      \EndIf
    \State Solve the Gaussian stochastic program~\eqref{eq:problem-formulation-Gaussian} to obtain an optimal policy $\tilde{\pi}^{N,*} \in \pidelta$
    \For{$h = 1$ \textbf{to} $H$}
      \State $\bx_h\gets$ state of the $N$-system at time $h$
      \If{$\norminf{\bx_h - \bx^*_h} \le z_h \Ndelta$}
        \State $\bY_h \gets \round(\tilde{\pi}^{N,*}(\bx_h,h))$
      \Else 
        \State $\bY_h \gets \round(\pip(\bx_h,h))$
      \EndIf
      \State Apply action $\bY_{h}$
    \EndFor
  \end{algorithmic}
\end{algorithm}

We now present our SP-based policy, formally described in Algorithm~\ref{algo:SP-based}.
The core idea of this policy is to solve the Gaussian stochastic program and obtain an optimal policy $\tilde{\pi}^{N,*} \in \pidelta$.
This policy is then applied to the $N$-system through a simple rounding procedure, which maps the action given by $\tilde{\pi}^{N,*}$ to a feasible action in the $N$-system with $\calO(1/N)$ error; see Appendix~\ref{app:details-Gaussian} for details.
We remark that we only use the policy $\tilde{\pi}^{N,*}$ when the RMAB is degenerate.
When the RMAB is non-degenerate, our policy defaults to a LP-based policy, which prior work (see Introduction) has shown to achieve an exponentially small optimality gap (in terms of \( N \)) relative to \( \VLP \).
The definition of non-degeneracy is given below, and it is easy for an algorithm to check whether an RMAB is non-degenerate or not.

\begin{definition}[Non-degeneracy \citep{zhang2021restless, gast2023linear, brown2023fluid}]  \label{def:non-degenerate-condition}
  An RMAB is \emph{non-degenerate} if, its corresponding fluid LP \eqref{eq:problem-formulation-LP} admits an optimal solution $\by^*$, such that for each $h$ with $1 \le h \le H$, there exists at least one state $s \in \calS$ such that $y^*_h(s,0)>0$ and $y^*_h(s,1)>0$. 
\end{definition}

\subsection{Illustration of the SP-based policy on a degenerate example}  
\label{sec:example}
We illustrate the SP-based policy via a two-state RMAB with horizon $H=2$ and pulling budget $\alpha=0.5$.
The rewards are given by $r_1(1,1) = r_2(1,1)=1$, with all other $(h,s,a)$-tuples being $r_h(s,a)=0$. 
The transition probabilities at $h=1$ are $\pp_1(1 \mid 1,1) = 0.2$, $\pp_1(1 \mid 1,0) = 0.9$, $\pp_1(1 \mid 2,1) = 0.7$, $\pp_1(1 \mid 2,0) = 0.25$. 
Assume at $h=1$ there are $N/2$ arms in state $1$, and the other $N/2$ arms are in state $2$.

\paragraph{$N$-system problem.}
Note that at time $h=2$, only $r_2(1,1)=1,$ so the optimal action at $h=2$ is to pull as many arms in state $1$ as possible, i.e., $Y_2(1,1)=\min\{0.5,X_2(1)\}.$ 
Therefore, we only need to decide the optimal action at $h=1$. 
We further notice that since $X_1(1)=X_1(2)=\alpha=0.5,$ the entries of $\bY_1$ are all determined by $Y_1(1,1)$ as follows
\[
Y_1(1,0)=Y_1(2,1)=0.5-Y_1(1,1),\quad Y_1(2,0)=Y_1(1,1).
\]
Therefore, we only need to optimize $Y_1(1,1)$.
The $N$-system optimal policy is given by the solution of the following problem
\begin{equation}\label{eq:example-N-system}
    \max_{0\le Y_1(1,1)\le 0.5} \ Y_1(1,1)  + \expect{\min \left\{0.5,X_2(1)\right\}}.
\end{equation}

\paragraph{Fluid LP and its optimal solution.}
In the fluid system, we replace $X_2(1)$ with its mean $x_2(1)=\sum_{s,a}y_1(s,a)\pp_1(1\mid s,a)=0.8-1.15\times y_1(1,1)$.
Then the fluid LP is
\begin{align} 
\VLP 
=&\max_{0 \le y_1(1,1) \le 0.5} y_1(1,1) + \min \big\{ 0.5, \; 0.8-1.15\times y_1(1,1) \big\},\label{eq:LP-simple}
\end{align} 
which exchanges the expectation and the min operator in the $N$-system problem \eqref{eq:example-N-system}.
The optimal solution is $y^*_1(1,1)=0.2609.$
One can verify that this problem is degenerate (see Definition~\ref{def:non-degenerate-condition}).

\paragraph{SP-based policy.}
Given the fluid solution $\by^*$ above, the corresponding Gaussian stochastic program is 
     \begin{equation}   \label{eq:Gaussian-system-problem}
    \max_{0\le \tilde{Y}_1(1,1)\le 0.5} \ \tilde{Y}_1(1,1)  + \expect{\min \left\{0.5, \; 0.8-1.15\times\tilde{Y}_1(1,1)+\frac{Z_1}{\sqrt{N}} \right\}},
     \end{equation} where $Z_1\simd \calN(0, \, 0.1624).$ Since we are searching for an optimal solution around $\by^*_1,$ let $\tilde{Y}_1(1,1)=y_1^*(1,1)+\frac{c}{\sqrt{N}}.$ 
     Then the stochastic program can be written as
     \begin{equation*} 
        \by^*_1(1,1)+0.5+\frac{1}{\sqrt{N}}\max_{c } \left(c + \expect{\min \left\{0, Z_1-1.15c \right\}}\right),
     \end{equation*} which is equivalent to  the following problem
        \begin{equation*} 
        \max_{c } \ c + \expect{\min \left\{0, Z_1-1.15c \right\}}.
     \end{equation*} There exists an explicit and unique solution to the problem above, and the solution can be numerically computed and is $c^*_{\text{d}}=0.3940$ (see details in Appendix~\ref{append:proof-of-sqrt-N-gap}). Therefore, the SP-based policy is given by $\round(\tilde{Y}^*_1(1,1))=\round(0.2609+0.3940/\sqrt{N})$.
This SP-based policy outperforms LP-based policies, as illustrated in Figure~\ref{fig:perf-1} in the Introduction.

\paragraph{Some insights.}
We compare the fluid approximation in \eqref{eq:LP-simple} with the Gaussian approximation in \eqref{eq:Gaussian-system-problem} when they both take an action $y_1(1,1)=\tilde{Y}_1(1,1)=y_1^*(1,1)+c/\sqrt{N}$ for a positive constant $c$.
In the fluid system, the reward for $h=2$ is $\min \big\{ 0.5, \; 0.8-1.15\times y_1(1,1) \big\}$, which is capped at $0.5$.
One can verify that this deviates from the value $\expect{\min \left\{0.5,X_2(1)\right\}}$ in the $N$-system by $\Theta(1/\sqrt{N})$, caused by the exchange of the expectation and the min operator.
In contrast, in the Gaussian stochastic system,
\[
\expect{\min \left\{0.5, 0.8-1.15\times\tilde{Y}_1(1,1)+\frac{Z_1}{\sqrt{N}} \right\}}=0.5+\expect{\min\left\{0,\frac{Z_1-1.15c}{\sqrt{N}}\right\}},
\]
which can be verified to be $\tilde{\calO}(1/N)$ away from the value $\expect{\min \left\{0.5,X_2(1)\right\}}$ in the $N$-system and thus giving a better approximation.
This better approximation allows us to find a better policy near $y_1^*(1,1)$.
The inaccuracy of the fluid approximation is in fact a fundamental reason that LP-based policies have a $\Theta(1/\sqrt{N})$ optimality gap for some degenerate RMABs, 
whereas the correction in our SP-based policy reduces the $\Theta(1/\sqrt{N})$ inaccuracy to $\tilde{\calO}(1/N)$.

\section{Main theoretical results}   \label{subsec:performance-analysis-general}

In this section, we present our main theoretical results.  
We will frequently use the following quantities.
Let $V^N_\pi(\bx_h, h)$ and ${Q}^N_\pi(\bx_h, \by_h, h)$ denote the value function and Q-function of policy $\pi$ evaluated in the $N$-system; and $\tilde{V}^N_\pi(\bx_h, h)$ and $\tilde{Q}^N_\pi(\bx_h, \by_h, h)$ denote the value function and Q-function of a policy $\pi$ evaluated in the Gaussian stochastic system.

Further, we consider the following optimal policies within the policy class $\pidelta$:
\begin{align*}
    \quad {\pi}^{N,*} \in \argmax_{\pi\in\pidelta}  {V}^N_\pi(\bx_h,h) \quad \mbox{and}\quad
    \tilde{\pi}^{N,*} \in \argmax_{\pi\in\pidelta}   \tilde{V}^N_\pi(\bx_h,h),
\end{align*} 
which are referred to as the {\em locally-optimal} policy and the {\em locally-SP-optimal} policy, respectively. 
We sometimes say that we apply the locally-SP-optimal policy $\tilde{\pi}^{N,*}$ to the $N$-system and denote its value function as $V^N_{\tilde{\pi}^{N,*}}(\bx_h, h)$, with the understanding that we apply $\tilde{\pi}^{N,*}$ with the rounding procedure.

There is a minor technical subtlety in the optimization within the policy class $\Pi_{\Ndelta}(\by^*)$.
By the definition of $\Pi_{\Ndelta}(\by^*)$, a policy in this class follows a predefined policy outside the $\tilde{\Theta}(1/\sqrt{N})$-neighborhood of $\bx^*_h$, which is not affected by the optimization.
With a slight abuse of terminology, we choose not to distinguish these locally optimal policies with the ones that also optimize outside this neighborhood. This simplification is justified for the following reason.  The smallest optimality gap of interest in this paper is of order $\tilde{\calO}(1/N)$.
However, according to the high probability bound established in Lemma~\ref{lem:high-proba-bound}, the state of the $N$-system's state falls out of this neighborhood with probability $\calO(1/N^{\log N})$, and consequently has only this order of influence in the value function.

\subsection{Global optimality} 
\begin{assumption}[Uniqueness]  \label{ass:opt-lp-distance}
The fluid LP in \eqref{eq:problem-formulation-LP} has a unique optimal solution $\by^*$.
\end{assumption}
Note that once we obtain an optimal solution to the LP, verifying uniqueness is straightforward \citep{bertsimas1997introduction}.

\begin{restatable}[Global optimality]{theorem}{GlobalOptimality}  \label{thm:global}
Consider an RMAB that satisfies the Uniqueness Assumption~\ref{ass:opt-lp-distance}.
Then the locally-SP-optimal policy, $\tilde{\pi}^{N,*}$, when applied to the $N$-system (with rounding), achieves an optimality gap of $\tilde{\calO}(1/N)$; i.e.,
\begin{equation*}
  \VNopt(\xini, 1) - V^N_{\tilde{\pi}^{N,*}}(\xini, 1) = \tilde{\calO}(1/N), 
\end{equation*}
where $\VNopt$ is the optimal value function, and $V^N_{\tilde{\pi}^{N,*}}$ is the value function of $\tilde{\pi}^{N,*}$, both in the $N$-system. 
\end{restatable}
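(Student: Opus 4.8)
The plan is to use the Gaussian stochastic system as a bridge between the true $N$-system optimum and the SP-based policy, and to reduce the theorem to two estimates that are each $\tilde{\calO}(1/N)$. Write $\pi^{N,*}$ for the locally-optimal policy, $\tilde{\pi}^{N,*}$ for the locally-SP-optimal policy, and recall $\tilde{V}^N_\pi$ denotes the value in the Gaussian system. I would establish (A) a \emph{policy-evaluation approximation}: for every $\pi\in\pidelta$, $\big|V^N_\pi(\xini,1)-\tilde{V}^N_\pi(\xini,1)\big|=\tilde{\calO}(1/N)$; and (B) a \emph{localization} estimate: $\VNopt(\xini,1)-\max_{\pi\in\pidelta}V^N_\pi(\xini,1)=\tilde{\calO}(1/N)$, i.e. restricting the search to $\pidelta$ costs only $\tilde{\calO}(1/N)$. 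Given (A) and (B), the theorem follows from a short sandwich (suppressing the argument $(\xini,1)$): since $V^N_{\pi^{N,*}}=\max_{\pi\in\pidelta}V^N_\pi$ and $\tilde{\pi}^{N,*}$ maximizes the Gaussian value,
\[
\VNopt \le V^N_{\pi^{N,*}} + \tilde{\calO}(1/N) \le \tilde{V}^N_{\pi^{N,*}} + \tilde{\calO}(1/N) \le \tilde{V}^N_{\tilde{\pi}^{N,*}} + \tilde{\calO}(1/N),
\]
where the second inequality is (A) applied to $\pi^{N,*}$. For the lower bound, (A) applied to $\tilde{\pi}^{N,*}$ gives $V^N_{\tilde{\pi}^{N,*}} \ge \tilde{V}^N_{\tilde{\pi}^{N,*}} - \tilde{\calO}(1/N)$. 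Subtracting yields $\VNopt - V^N_{\tilde{\pi}^{N,*}}=\tilde{\calO}(1/N)$ (the gap being nonnegative since $\tilde{\pi}^{N,*}$ is a feasible $N$-system policy).

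For (A) I would argue by backward induction on $h$. Since the rewards $\br_h\bY_h^\top$ are identical in both systems for a fixed policy, the value functions differ only through the one-step transition. Setting $g=V^N_\pi(\cdot,h+1)$,
\[
\big|V^N_\pi(\bx,h)-\tilde{V}^N_\pi(\bx,h)\big| \le \big|\Ex{g(\bX_{h+1})}-\Ex{g(\Xdiff_{h+1})}\big| + \Ex{\big|(V^N_\pi-\tilde{V}^N_\pi)(\Xdiff_{h+1},h+1)\big|},
\]
the second term being controlled by the induction hypothesis. Both $\bX_{h+1}$ and $\Xdiff_{h+1}$ are fluctuations of size $1/\sqrt N$ about the common conditional mean $\bar{\bx}_{h+1}:=\sum_{s,a}y(s,a)\pp_h(\cdot\mid s,a)$, so with $g$ Lipschitz on the tube with an $N$-independent constant (bounded by $\rmax$, $H$, $S$ and the transition matrices), the law-mismatch term is at most $\mathrm{Lip}(g)\cdot\wass{\bX_{h+1}}{\Xdiff_{h+1}}=\mathrm{Lip}(g)\cdot\frac1{\sqrt N}\wass{\sqrt N(\bX_{h+1}-\bar{\bx}_{h+1})}{\bZ_h}$. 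The normalized multinomial sum $\sqrt N(\bX_{h+1}-\bar{\bx}_{h+1})$ is $\tilde{\calO}(1/\sqrt N)$-close to $\bZ_h$ in $\wass{\cdot}{\cdot}$: a Berry--Esseen-type Wasserstein CLT bound contributes $\calO(1/\sqrt N)$, and the covariance mismatch $\|\Gamma_h(\by_h)-\Gamma_h(\by^*_h)\|=\calO(\|\by_h-\by^*_h\|)=\tilde{\calO}(1/\sqrt N)$, which holds because $\pi\in\pidelta$ keeps $\by_h$ within $\kappa z_h\Ndelta$ of $\by^*_h$, contributes the same order. Hence the per-step mismatch is $\tilde{\calO}(1/N)$, and summing over the fixed horizon $H$ preserves the order. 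The projection in \eqref{eq:X-c} is $1$-Lipschitz and active only with probability $\calO(1/N^{\log N})$ by Lemma~\ref{lem:high-proba-bound}, so it is absorbed into the $\tilde{\calO}(1/N)$ error, as is the low-probability event on which a policy in $\pidelta$ leaves the tube.

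The main obstacle is (B), the localization estimate, and this is exactly where the Uniqueness Assumption~\ref{ass:opt-lp-distance} is indispensable. I would show that the global optimum can be attained within $\pidelta$ up to $\tilde{\calO}(1/N)$. Two facts drive this. First, a concentration argument shows that under a policy whose actions stay near $\by^*_h$ the state trajectory remains in the $z_h\Ndelta$-tube around the fluid path $\bx^*$ with probability $1-\calO(1/N^{\log N})$, so outside the tube one may switch to $\pip$ at negligible cost. Second, uniqueness of the fluid optimum supplies an LP sensitivity (reduced-cost) constant $\sigma>0$ such that moving the action a distance $\Delta$ from $\by^*_h$ degrades the attainable continuation value at linear rate $\sigma\Delta$; since the only benefit of deviating from $\by^*_h$ is to correct the $\Theta(1/\sqrt N)$ Jensen gap at the kinks of the value function, the optimal correction is itself $\Theta(1/\sqrt N)$ and fits inside the $\kappa z_h\Ndelta$ action-neighborhood by the choice $\kappa=\max\{2+6S,\,3+2\rmax HS/\sigma\}$. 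If uniqueness fails, the optimal policy may hedge between distinct fluid optima and a single-neighborhood restriction can lose $\Theta(1/\sqrt N)$ — precisely the regime separated off in Theorem~\ref{thm:improvement} — which is why the assumption cannot be dropped.

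Finally I would collect the book-keeping: the rounding step in Algorithm~\ref{algo:SP-based} maps the continuous action $\tilde{\pi}^{N,*}(\bx_h,h)$ to a feasible integer action with $\calO(1/N)$ per-step error (Appendix~\ref{app:details-Gaussian}), contributing only $\calO(1/N)$ to $V^N_{\tilde{\pi}^{N,*}}$ over the fixed horizon; and the Lipschitz constants together with $z_h,\sigma,\kappa$ are all $N$-independent and computable from the fluid LP, so every $\tilde{\calO}(\cdot)$ above is uniform on the tube. Combining (A), (B), and the rounding bound through the sandwich of the first paragraph yields $\VNopt(\xini,1)-V^N_{\tilde{\pi}^{N,*}}(\xini,1)=\tilde{\calO}(1/N)$, as claimed.
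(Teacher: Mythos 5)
Your overall architecture matches the paper's: localize the $N$-system optimum to $\pidelta$ using uniqueness, compare the two systems via a Wasserstein bound on the one-step transitions, and close with a triangle inequality plus the rounding error. Your step (B) corresponds to the paper's Lemma~\ref{prop:sufficient-condition-of-assumption} (which in fact proves the localization gap is exactly zero, not merely $\tilde{\calO}(1/N)$), and your sandwich reproduces the combination of Lemmas~\ref{lem:vd} and~\ref{lem:ed}. Your sketch of (B) has the right ingredients (the reduced-cost constant $\sigma$ from uniqueness, the choice of $\kappa$, the concentration of the trajectory in the tube), though it glosses over the feasibility obstruction that $\by_h$ need not be a valid action for $\bx^*_h$ and vice versa; the paper resolves this with the lifted $(S+1)$-state LP and the explicit action mappings of Claim~\ref{claim:properties-mappings}, and this is a nontrivial part of the argument rather than book-keeping.

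The genuine gap is in (A). You claim that for \emph{every} $\pi\in\pidelta$ the continuation value $g=V^N_\pi(\cdot,h+1)$ is Lipschitz on the tube with an $N$-independent constant, and you use $|\Ex{g(\bX_{h+1})}-\Ex{g(\Xdiff_{h+1})}|\le \mathrm{Lip}(g)\cdot\wass{\bX_{h+1}}{\Xdiff_{h+1}}$. This is unjustified: membership in $\pidelta$ only constrains the \emph{distance} of $\pi(\bx_h,h)$ from $\by^*_h$, not its continuity in $\bx_h$, so a policy in $\pidelta$ can map states at distance $o(1/\sqrt N)$ to actions differing by $\Theta(\kappa z_h\Ndelta)=\tilde\Theta(1/\sqrt N)$, and its value function then has no $N$-independent Lipschitz constant. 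This is precisely the technical difficulty the paper flags; it is resolved by proving an \emph{approximate} local Lipschitz property only for the value function of the \emph{optimal} policy within the class (Lemma~\ref{lem:local-Lipschitz}), which relies on optimality together with the locally Lipschitz action-mapping construction of Lemma~\ref{lem:action-mapping-construction}, and by structuring the backward induction so that the function integrated against the coupled next-state laws is always this optimal Gaussian value function $\tilde V^N_{\tilde\pi^{N,*}}(\cdot,h+1)$ (this is also why Lemma~\ref{lem:ed}, the policy-evaluation comparison for $\tilde\pi^{N,*}$, goes through even though $V^N_{\tilde\pi^{N,*}}$ itself is not an optimal value function). Your sandwich only ever invokes (A) at the two locally optimal policies, so the theorem can be repaired along the paper's lines, but as written the Lipschitz assertion for arbitrary $\pi\in\pidelta$ is false in general and the induction needs to be rearranged so that the Wasserstein/Lipschitz pairing is applied to the optimal value function rather than to $V^N_\pi$ for a generic $\pi$.
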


A detailed proof of Theorem~\ref{thm:global} is presented in Appendix~\ref{appendix:proof-global-optimality}.
Below, we provide an outline and highlight some novel and technically interesting components of the proof.

\begin{figure}
\centering
\includegraphics[scale=1]{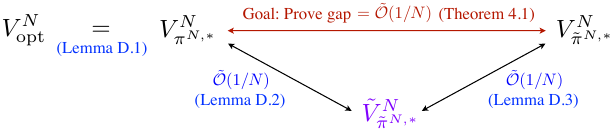}
\caption{Proof structure of Theorem~\ref{thm:global}.}
\label{fig:proof-outline}
\end{figure}
Our proof of Theorem~\ref{thm:global} consists of proving the following statements, as illustrated in Figure~\ref{fig:proof-outline}.
\begin{itemize}[leftmargin=1.5em]
    \item \emph{Prove that $\VNopt=V^N_{\pi^{N,*}}$ (Lemma~\ref{prop:sufficient-condition-of-assumption} in the appendix), where $\pi^{N,*}$ is the locally optimal policy within the policy class $\pidelta$.}
    This is proved by showing that a (globally) optimal policy for the $N$-system belongs to the policy class $\pidelta$ under Assumption \ref{ass:opt-lp-distance}.
    \item \emph{Prove that $|V^N_{\pi^{N,*}}(\bx, h)-\tilde{V}^N_{{\tilde{\pi}^{N,*}}}(\bx, h)| =\tilde{\calO}(1/N)$ (Lemma~\ref{lem:vd} in the appendix) and $|\tilde{V}^N_{{\tilde{\pi}^{N,*}}}(\bx, h)-V^N_{\tilde{\pi}^{N,*}}(\bx, h)|=\tilde{\calO}(1/N)$ (Lemma~\ref{lem:ed} in the appendix).}
    These two lemmas are enabled by the fact that we restrict the policies $\pi^{N,*}$ and $\tilde{\pi}^{N,*}$ to $\pidelta$, i.e., a $\tilde{\Theta}(1/\sqrt{N})$-neighborhood of $\by^*$, which translates into a Wasserstein distance of $\tilde{\calO}(1/N)$ between the respective next-state distributions in the $N$-system and in the Gaussian stochastic system.
\end{itemize}

We next highlight two interesting components in the proofs.
\begin{itemize}[leftmargin=1.5em]
    \item \emph{Characterization of optimal policies in the $N$-system.}
    Lemma~\ref{prop:sufficient-condition-of-assumption} in the appendix establishes that under Assumption~\ref{ass:opt-lp-distance}, there exists an optimal policy of the $N$-system whose actions are close to the optimal fluid solution $\by^*$.
    This result is noteworthy because optimal policies of the $N$-system are not well-understood in the literature.
    Prior work often circumvents this by only studying the LP upper bound $\VLP$, which can be loose as shown in Theorem~\ref{thm:lower-bounds}.
    \item \emph{Approximate Lipschitz continuity in the Gaussian stochastic system.}
    A key step in proving Lemmas~\ref{lem:vd} and \ref{lem:ed} in the appendix is to establish an approximate local Lipschitz property of the value function $\tilde{V}^N_{{\tilde{\pi}^{N,*}}}$ in the Gaussian stochastic system, where restricting the policy to ${\tilde{\pi}^{N,*}}$ introduces technical challenges.
    We overcome these challenges through the careful construction of an action mapping.
\end{itemize}

\subsection{The $\Theta(1/\sqrt{N})$ optimality gap of LP-based policies}
\label{sec:fluid-gap}
To complement Theorem~\ref{thm:global}, we next present a result showing that the $\Theta(1/\sqrt{N})$ optimality gap is fundamental to a large class of LP-based policies. 
Specifically, consider the following policy class, which includes a large class of LP-based policies such as those in \citep{Brown2020IndexPA, zhang2021restless, gast2023linear}:  
\begin{align}   \label{def:policy-sub}
\pisub := \big\{ \pi\colon  \|\pi(\bx_h,h) - \by^*_h \|_\infty\leq \kappa \|\bx_h - \bx^*_h\|_\infty, \; \forall 1\le h \le H \big\},
\end{align}
where $(\bx^*,\by^*)$ is an optimal solution of the fluid LP in ~\eqref{eq:problem-formulation-LP}.
It has been shown in \citep[Theorem 1]{gast2023linear} that any policy in $\pisub$ has an $\calO(1/\sqrt{N})$ optimality gap.
We now show that there exist RMAB instances where this optimality gap order is tight. The result is established based on the example given in Section \ref{sec:example}, and the detailed proof is presented in Appendix~\ref{append:proof-of-sqrt-N-gap}.

\begin{restatable}[Fluid gap]{theorem}{LowerBounds}   \label{thm:lower-bounds}
There exist RMAB instances for which all LP-based policies in $\pisub$ have an $\Theta(1 / \sqrt{N})$ optimality gap; i.e.,
$\VNopt(\xini,1) - \VNLPres(\xini,1) = \Theta(1 / \sqrt{N}),$
where $\VNopt$ is the optimal value function, and $\VNLPres$ is the value function of the optimal policy within the policy class $\pisub$.
Moreover, there is an $\Theta(1 / \sqrt{N})$ gap between the optimal value of the $N$-system and the optimal value of the fluid LP in \eqref{eq:problem-formulation-LP}; i.e., $\VLP(\xini,1) - \VNopt(\xini,1) = \Theta(1 / \sqrt{N})$.
\end{restatable}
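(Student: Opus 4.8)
The plan is to run the whole argument on the explicit two-state, two-step instance of Section~\ref{sec:example}, where both $\VNopt(\xini,1)$ and $\VLP(\xini,1)$ collapse to scalar optimizations. As recorded there, the last step is forced to be greedy ($Y_2(1,1)=\min\{0.5,X_2(1)\}$), so by \eqref{eq:example-N-system} the $N$-system optimum is $\VNopt(\xini,1)=\max_{0\le y\le 0.5}F_N(y)$ with $F_N(y):=y+\expect{\min\{0.5,X_2(1)\}}$ under $Y_1(1,1)=y$, while \eqref{eq:LP-simple} gives $\VLP(\xini,1)=y_1^*(1,1)+0.5$ exactly, attained at the kink $y_1^*(1,1)=0.3/1.15$ where the mean $0.8-1.15\,y_1^*(1,1)=0.5$. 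The structural observation driving the first claim is that the initial state equals the fluid state, $\bx_1=\bx_1^*=\xini$, so the defining inequality of $\pisub$ has vanishing right-hand side at $h=1$ and forces every policy in $\pisub$ to play $\by_1^*$ there (up to $\calO(1/N)$ rounding), i.e.\ $Y_1(1,1)=y_1^*(1,1)$; since the greedy last-step rule also lies in $\pisub$ (as $\kappa\ge 1$) and is optimal given $\bx_2$, the best achievable value in $\pisub$ is $\VNLPres(\xini,1)=F_N(y_1^*(1,1))+\calO(1/N)$.

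The technical core is a central-limit expansion of $F_N$ in a $1/\sqrt N$-window around $y_1^*(1,1)$. Writing $Y_1(1,1)=y_1^*(1,1)+c/\sqrt N$, the state $X_2(1)$ is an average of independent binomials with mean $0.5-1.15c/\sqrt N$ and variance $\Gamma/N$ with $\Gamma\to 0.1624$; setting $W_N:=\sqrt N\,(X_2(1)-\expect{X_2(1)})$ and using that $\min\{0.5,\cdot\}$ is $1$-Lipschitz gives
\begin{equation*}
\sqrt N\,\bigl(F_N(y_1^*(1,1)+c/\sqrt N)-y_1^*(1,1)-0.5\bigr)=c+\expect{\min\{0,W_N-1.15c\}}.
\end{equation*}
By the Lindeberg CLT $W_N\Rightarrow Z_1\simd\calN(0,0.1624)$, and since $\sup_N\expect{W_N^2}<\infty$ the family $\{\min\{0,W_N-1.15c\}\}$ is uniformly integrable, so the right-hand side converges to $g(c):=c+\expect{\min\{0,Z_1-1.15c\}}$; because $c\mapsto\expect{\min\{0,W-1.15c\}}$ is $1.15$-Lipschitz uniformly in the law of $W$, the convergence is uniform on compact $c$-intervals. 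To pass from the pointwise limit to the optimum I would confine the maximizer using the fluid upper bound $F_N(y)\le y+\min\{0.5,0.8-1.15y\}$ (Jensen), which decays linearly in $|y-y_1^*(1,1)|$ and hence forces the optimal $c$ into a fixed interval $[-C_0,C_0]$ for all large $N$; integrality of the $N$-system actions then shifts the optimum by only $\calO(1/N)$ since $F_N$ is $\calO(1)$-Lipschitz. Uniform convergence on $[-C_0,C_0]$ gives $\max_c g_N(c)\to\max_c g(c)$, yielding
\begin{equation*}
\VNopt(\xini,1)=y_1^*(1,1)+0.5+\frac{M^*}{\sqrt N}+o(1/\sqrt N),\qquad M^*:=\max_{c\in\R} g(c),
\end{equation*}
and likewise $\VNLPres(\xini,1)=y_1^*(1,1)+0.5+g(0)/\sqrt N+o(1/\sqrt N)$.

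It then remains to compare $g(0)$, $M^*$, and $0$. Rewriting $g(c)=-0.15c-\expect{(Z_1-1.15c)^+}$ and noting $g'(0)=1-1.15\,\Phi(0)=0.425>0$ shows, via strict concavity of $g$ (so the maximizer $c^*$ is unique), that $c^*>0$; this forces $M^*=g(c^*)\le -0.15\,c^*<0$, while monotonicity of $g$ at $0$ gives $g(0)<g(c^*)=M^*$. Consequently $\VLP(\xini,1)-\VNopt(\xini,1)=-M^*/\sqrt N+o(1/\sqrt N)=\Theta(1/\sqrt N)$ with $-M^*>0$ (consistent with the LP being an upper bound), and $\VNopt(\xini,1)-\VNLPres(\xini,1)=(M^*-g(0))/\sqrt N+o(1/\sqrt N)=\Omega(1/\sqrt N)$; the matching $\calO(1/\sqrt N)$ upper bound for every policy in $\pisub$ is exactly \citep[Theorem 1]{gast2023linear}, so the gap is $\Theta(1/\sqrt N)$.

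The step I expect to be the main obstacle is this uniform CLT expansion combined with the confinement of the maximizer to a compact window, i.e.\ upgrading the pointwise limit of $F_N$ to convergence of $\max_y F_N$; this is what rules out the optimum escaping to the boundary or to a scale other than $1/\sqrt N$, and it is also where the interplay between the integrality grid ($\calO(1/N)$) and the $1/\sqrt N$ objective must be controlled. Once the expansion is in place, the sign comparisons $g(0)<M^*<0$ are elementary.
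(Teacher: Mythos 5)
Your proposal is correct in substance and reaches both conclusions, but it takes a genuinely different route from the paper on the key approximation step. The paper's proof (Appendix~\ref{append:proof-of-sqrt-N-gap}) works with a parametric family of instances generalizing Section~\ref{sec:example} and, crucially, reduces the $N$-system to the Gaussian stochastic system by invoking the machinery behind Theorem~\ref{thm:global}: Lemma~\ref{prop:sufficient-condition-of-assumption} places an optimal $N$-system policy in $\pidelta$, and Lemma~\ref{lem:vd} gives $|\VNopt-\VNdiffopt|=\tilde{\calO}(1/N)$ and $|Q^N(\beta)-\tilde{Q}^N(\beta)|=\tilde{\calO}(1/N)$; what remains is explicit Gaussian calculus for $\VNdiffopt-\tilde{Q}^N(0)$ and $\VLP-\VNdiffopt$, which is equivalent in content to your analysis of $g(c)=-0.15c-\expect{(Z_1-1.15c)^+}$ via concavity and $g'(0)>0$. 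You instead bypass the Gaussian system entirely: a direct Lindeberg CLT with uniform integrability gives the $1/\sqrt{N}$ expansion of $F_N$ in a window around $y_1^*(1,1)$, and the Jensen/fluid upper bound confines the maximizer to that window. Your route is more elementary and self-contained (it does not lean on the Uniqueness-Assumption machinery or the Wasserstein bounds), at the price of only an $o(1/\sqrt{N})$ error term rather than $\tilde{\calO}(1/N)$ --- which is still sufficient for a $\Theta(1/\sqrt{N})$ statement. The structural observations (that $\bx_1=\bx_1^*$ forces every $\pi\in\pisub$ to play $\by_1^*$ at $h=1$, and that the matching $\calO(1/\sqrt{N})$ upper bound is \citep[Theorem 1]{gast2023linear}) coincide with the paper's.

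One step needs tightening. Your justification of uniform convergence of $g_N(c):=c+\expect{\min\{0,W_{N,c}-1.15c\}}$ on compacts only uses that $c\mapsto\expect{\min\{0,W-1.15c\}}$ is $1.15$-Lipschitz for a \emph{fixed} law of $W$; but the law of $W_{N,c}$ itself varies with $c$ through the binomial parameters, so equi-Lipschitzness of $g_N$ does not follow from that observation alone. The repair is routine --- couple the systems at actions $c$ and $c'$ by re-binning the $\calO(|c-c'|\sqrt{N})$ reassigned arms, which yields $|g_N(c)-g_N(c')|\le 3|c-c'|$ uniformly in $N$, and equi-Lipschitz plus pointwise convergence gives uniformity on compacts --- but as written the argument has a small hole at exactly the step you yourself flagged as the main obstacle.
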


We remark that although this theorem is proved via a specific example, we believe the result to hold more broadly for many degenerate RMABs.
The $\Theta(1 / \sqrt{N})$ optimality gap of LP-based policies stems from the $\Theta(1 / \sqrt{N})$ approximation error in the fluid approximation, which arises when exchanging the expectation and the minimum operator, as shown in the example in Section~\ref{sec:example}.
This phenomenon is common in degenerate RMABs.

\subsection{Performance improvement}

Under the Uniqueness Assumption~\ref{ass:opt-lp-distance}, we have shown that our SP-based policy achieves an $\tilde{\calO}(1/N)$ optimality gap.
When this assumption does not hold, the same optimality gap may not apply.
However, Theorem~\ref{thm:improvement} below shows that the SP-based policy can still yield improvement over LP-based policies.

Let $\by^*$ be any optimal solution to the fluid LP in~\eqref{eq:problem-formulation-LP}, and let $\tilde{\pi}^{N,*}$ be the corresponding SP-based policy.
Recall that $\tilde{V}^N_{\tilde{\pi}^{N,*}}(\xini, 1)$ and $\tilde{Q}^N_{\tilde{\pi}^{N,*}}(\xini, \by_1, 1)$ denote the value function and the Q-function of the policy $\tilde{\pi}^{N,*}$ in the Gaussian stochastic system, where in the Q-function action $\by_1$ is applied at time $1$.
Theorem~\ref{thm:improvement} states that if the action given by the SP-based policy $\tilde{\pi}^{N,*}$ at time~$1$ is ``strictly'' better than the optimal LP-based action $\by_1^*$ in the Gaussian stochastic system, then the SP-based policy improves over any LP-based policy in $\pisub$ by $\Omega(1/\sqrt{N})$ in the $N$-system.

\begin{restatable}[Performance improvement]{theorem}{PerformanceImprovement}   \label{thm:improvement}
If there exists a positive constant $\epsilon$ independent of $N$ such that $\tilde{V}^N_{\tilde{\pi}^{N,*}}(\xini, 1) - \tilde{Q}^N_{\tilde{\pi}^{N,*}}(\xini, \by^*_1, 1) \ge \epsilon/\sqrt{N}$, then for any $\pi\in \pisub$, we have $V^N_{\tilde{\pi}^{N,*}}(\xini, 1) - V_{\pi}^N(\xini, 1) =\Omega(1/\sqrt{N})$.
\end{restatable}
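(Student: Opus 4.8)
The plan is to exploit the special structure at the first decision epoch together with the two approximation lemmas (Lemma~\ref{lem:vd} and Lemma~\ref{lem:ed}) to transfer the assumed Gaussian-system margin into an $N$-system gap. The crucial initial observation is that the initial state is deterministic and coincides with the fluid trajectory, i.e. $\bx_1 = \xini = \bx^*_1$. Hence for every $\pi \in \pisub$ the defining inequality $\|\pi(\bx_1,1) - \by^*_1\|_\infty \le \kappa\|\bx_1 - \bx^*_1\|_\infty = 0$ forces $\pi(\xini,1) = \by^*_1$. Thus every LP-based policy in $\pisub$ plays exactly the fluid action $\by^*_1$ at time~$1$, so that $V^N_\pi(\xini,1) = Q^N_\pi(\xini, \by^*_1, 1)$, and the whole comparison reduces to comparing continuation values after the common first action $\by^*_1$.

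First I would establish a continuation-domination bound: for every state $\bx$ in the $\tilde{\Theta}(1/\sqrt N)$-neighborhood of $\bx^*_2$ and every $\pi\in\pisub$,
\[
V^N_\pi(\bx,2) \le V^N_{\tilde{\pi}^{N,*}}(\bx,2) + \tilde{\calO}(1/N).
\]
To prove this I would associate to $\pi$ the policy $\pi'\in\pidelta$ that agrees with $\pi$ whenever $\|\bx_h-\bx^*_h\|_\infty \le z_h\Ndelta$ (legitimate, since there $\|\pi(\bx_h,h)-\by^*_h\|_\infty \le \kappa z_h\Ndelta$) and switches to $\pip$ otherwise. Since $\pi$ and $\pi'$ make identical decisions until the trajectory first leaves the neighborhood, and Lemma~\ref{lem:high-proba-bound} confines the trajectory to this neighborhood with probability $1-\OlogN$, the bounded rewards give $|V^N_\pi(\bx,2)-V^N_{\pi'}(\bx,2)| = \OlogN$. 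Then Lemma~\ref{lem:ed} yields $V^N_{\pi'}(\bx,2) \le \tilde{V}^N_{\pi'}(\bx,2) + \tilde{\calO}(1/N)$, the local optimality of $\tilde{\pi}^{N,*}$ over $\pidelta$ gives $\tilde{V}^N_{\pi'}(\bx,2) \le \tilde{V}^N_{\tilde{\pi}^{N,*}}(\bx,2)$, and a second application of Lemma~\ref{lem:ed} gives $\tilde{V}^N_{\tilde{\pi}^{N,*}}(\bx,2) \le V^N_{\tilde{\pi}^{N,*}}(\bx,2) + \tilde{\calO}(1/N)$. Chaining these yields the displayed bound.

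Next I would integrate over the common first transition and pass to the Gaussian system. Taking expectations over the (identical) $N$-system next-state $\bX_2$ under action $\by^*_1$ turns the continuation bound into $Q^N_\pi(\xini,\by^*_1,1) \le Q^N_{\tilde{\pi}^{N,*}}(\xini,\by^*_1,1) + \tilde{\calO}(1/N)$. A Q-function analogue of Lemma~\ref{lem:ed} --- obtained by combining the $\tilde{\calO}(1/N)$ Wasserstein bound between the $N$-system and Gaussian next-state laws under $\by^*_1$ with the approximate Lipschitz continuity of $\tilde{V}^N_{\tilde{\pi}^{N,*}}$ --- then gives $Q^N_{\tilde{\pi}^{N,*}}(\xini,\by^*_1,1) \le \tilde{Q}^N_{\tilde{\pi}^{N,*}}(\xini,\by^*_1,1) + \tilde{\calO}(1/N)$. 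Combining with $V^N_{\tilde{\pi}^{N,*}}(\xini,1) \ge \tilde{V}^N_{\tilde{\pi}^{N,*}}(\xini,1) - \tilde{\calO}(1/N)$ from Lemma~\ref{lem:ed} and the hypothesis $\tilde{V}^N_{\tilde{\pi}^{N,*}}(\xini,1) - \tilde{Q}^N_{\tilde{\pi}^{N,*}}(\xini,\by^*_1,1) \ge \epsilon/\sqrt N$ yields
\[
V^N_{\tilde{\pi}^{N,*}}(\xini,1) - V^N_\pi(\xini,1) \ge \epsilon/\sqrt N - \tilde{\calO}(1/N) = \Omega(1/\sqrt N),
\]
since $\tilde{\calO}(1/N) = o(1/\sqrt N)$.

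I expect the main obstacle to be the continuation-domination step, specifically the mismatch that a policy in $\pisub$ need not lie in $\pidelta$ (it lacks the $\pip$ fallback outside the neighborhood). Reconciling the two --- by coupling $\pi$ with its $\pidelta$-surrogate $\pi'$ and absorbing the discrepancy into $\OlogN$ via the high-probability confinement of Lemma~\ref{lem:high-proba-bound} --- together with securing the approximate Lipschitz continuity of the Gaussian value function needed for the Q-function transfer, is where the real work lies; the remaining steps are bookkeeping built on the already-established Lemmas~\ref{lem:vd} and~\ref{lem:ed}.
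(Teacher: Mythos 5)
Your proposal is correct and follows essentially the same route as the paper's proof: both hinge on the forced first action $\pi(\xini,1)=\by^*_1$ for every $\pi\in\pisub$, the construction of a $\pidelta$-surrogate $\pi'$ whose value differs from that of $\pi$ by only $\OlogN$ via the high-probability confinement of Lemma~\ref{lem:high-proba-bound}, the local optimality of $\tilde{\pi}^{N,*}$ within $\pidelta$, and Lemma~\ref{lem:ed} to transfer values between the Gaussian and $N$-systems. The only difference is bookkeeping: the paper chains these inequalities entirely inside the rescaled $1/\sqrt{N}$-scale Gaussian SP (obtaining $\tilde{V}^N_{\tilde{\pi}^{N,*}}-\tilde{V}^N_{\pi}\ge \epsilon/(2\sqrt{N})$ before converting both endpoints to the $N$-system), whereas you decompose into first-step reward plus continuation and do the transfer at the level of Q-functions, which is equivalent.
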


Note that a key strength of this result is that we only need to evaluate the first step action $\by_1^*$ in the Gaussian stochastic system, and the result then holds for all policies in $\pisub$.  We refer to Appendix~\ref{append:proof-of-performance-improvement} for a detailed proof of Theorem~\ref{thm:improvement}.

\section{Numerical experiments}   \label{sec:numerical-experiments}

In this section, we first present the details of the numerical method we used to solve the Gaussian stochastic program~\eqref{eq:problem-formulation-Gaussian}, which is an essential part of Algorithm~\ref{algo:SP-based}. 
For the SP~\eqref{eq:problem-formulation-Gaussian}, we introduce the vectors $(\bc_h, \bd_h)_{h \in \left\{ 1,2,\dots,H\right\}}$ as follows: 
\begin{align}
{\tilde \bY}_h := & \ \by^*_h+\frac{\bc_h}{\sqrt{N}},  \label{eq:Y-and-c-relation} \\
\bd_{h+1} := & \ \proj{\sqrt{N}(\Delta^S - \bx^*_{h+1})}{\sum_{s,a} c_h(s,a)\pp_h(\cdot \mid s,a)+{\bZ_h}}, \nonumber \\
\tilde{\bX}_{h+1} := & \ \bx^*_{h+1}+\frac{\bd_{h+1}}{\sqrt{N}}.  \label{eq:X-and-d-relation}
\end{align}
With these vectors, we can rewrite the SP~\eqref{eq:problem-formulation-Gaussian} as follows: 
\begin{alignat}{2}  
    & \quad \underset{\pi_c\in \Npic}{\mathrm{max}} \quad \sum_{h=1}^{H}  \mathbb{E} \left[ \br_h \bc_h^\top \right] \label{eq:problem-formulation-Gaussian-new} \\
    \hbox{s.t.} & \quad \sum_{s} c_h(s, 1) = 0 \ \text{and} \ \bc_h(\cdot, 0) + \bc_h(\cdot, 1) = \bd_h, \quad 1 \le h \le H, \nonumber \\
    & \quad c_h(s, a) \geq - \sqrt{N} y_h^*(s,a), \quad 1 \le h \le H,   \label{eq:constraint-on-c}  \\
    & \quad \bd_1 = \bzero, \quad \quad \bd_{h+1} = \proj{\sqrt{N}(\Delta^S - \bx^*_{h+1})}{\sum_{s,a} c_h(s,a)\pp_h(\cdot \mid s,a)+{\bZ_h}},  \quad 1 \le h \le H-1.  \label{eq:constraint-on-d}
    \end{alignat} 
    Here $\Npic$ is a set of policies where each policy maps $\bd_h$ to $\bc_h$ such that $\|\bc_h\|_\infty \leq \kappa z_h \Ndelta\sqrt{N}$ if $\|\bd_h\|_\infty \leq z_h \Ndelta\sqrt{N}$, and it follows a predefined policy otherwise; i.e.,
    \begin{align*}
    \Npic := \big\{ \pi_c\colon & \|\pi_c(\bd_h,h)\|_\infty\leq \kappa z_h \Ndelta \sqrt{N},  \ \hbox{ if }  \|\bd_h\|_\infty \leq z_h \Ndelta \sqrt{N}; \\
    & \pi_c \hbox{ follows a predefined policy},  \ \hbox{ if }  \|\bd_h\|_\infty > z_h \Ndelta \sqrt{N} \big\}.
    \end{align*}
Note that there is a one-to-one correspondence between $\pidelta$ and $\Npic$. Hence solving the SP~\eqref{eq:problem-formulation-Gaussian} is equivalent to solving the SP~\eqref{eq:problem-formulation-Gaussian-new}.
We further simplify the SP~\eqref{eq:problem-formulation-Gaussian-new} to the following $N$-independent and projection-free SP:
\begin{alignat}{2}  
    & \quad \underset{ \pi_c\in \pic}{\mathrm{max}} \quad \sum_{h=1}^{H}  \mathbb{E} \left[ \br_h \bc_h^\top \right] \label{eq:problem-formulation-Gaussian-N-independent} \\
    \hbox{s.t.} & \quad \sum_{s} c_h(s, 1) = 0 \ \text{and} \ \bc_h(\cdot, 0) + \bc_h(\cdot, 1) = \bd_h, \quad 1 \le h \le H, \nonumber \\
    & \quad c_h(s, a) \geq 0 \ \text{ if $y^*_h(s,a)=0$}, \quad 1 \le h \le H, \label{eq:constraint-on-c-N-independent}  \\
    & \quad \bd_1 = \bzero, \quad \bd_{h+1} = \sum_{s,a} c_h(s,a)\pp_h(\cdot \mid s,a) + \bZ_h, \quad 1 \le h \le H-1, \label{eq:constraint-on-d-N-independent}
    \end{alignat} 
where the policy class $\pic$ is
\begin{align*}
    \pic := \big\{ \pi_c\colon & \|\pi_c(\bd_h,h)\|_\infty\leq \kappa \tilde{D},  \ \hbox{ if }  \|\bd_h\|_\infty \leq \tilde{D}; \\
    & \pi_c \hbox{ follows a predefined policy},  \ \hbox{ if }  \|\bd_h\|_\infty > \tilde{D} \big\}.
\end{align*} In our numerical experiments, we have chosen $\tilde{D}$ to be $20$.

Let ${\tilde{\pi}^{*}_c}$ be an optimal policy in $\pic$ for the SP~\eqref{eq:problem-formulation-Gaussian-N-independent}. For the $N$-system, given state $\bx_h$ at step $h,$ we apply action $$\by_h=\round\left(\by^*_h+\frac{\tilde{\pi}^{*}_c\left(\sqrt{N}(\bx_h-\bx^*_h)\right)}{\sqrt{N}}\right).$$ In other words, the solution to the SP \eqref{eq:problem-formulation-Gaussian-N-independent} provides an SP-based policy that can be applied to all $N.$

The SP~\eqref{eq:problem-formulation-Gaussian-N-independent} is a stochastic linear program, which is significantly easier to solve than the original $N$-armed RMAB Problem~\eqref{eq:problem-formulation-original} because the ``noise'' $\bZ_h$'s are predefined Gaussian random vectors whose distributions are independent of the state and action of the system.  
For such a problem, there exist computationally efficient methods.  In our numerical examples, we numerically solved the $1/\sqrt{N}$-scale SP~\eqref{eq:problem-formulation-Gaussian-N-independent} using the standard Sample Average Approximation (SAA) approach \citep[Chapter~5]{shapiro2021lectures} and  the Explorative Dual Dynamic Programming (EDDP) algorithm \citep[Algorithm 3]{lan2022complexity}, which is an enhancement of the classical Stochastic Dual Dynamic Programming (SDDP) \citep{pereira1991multi,shapiro2011analysis}.

\subsection{Numerical evaluation on machine maintenance RMABs}

\begin{figure}[h]
    \centering
    \begin{subfigure}[b]{0.45\linewidth}
        \centering
        \includegraphics[width=\linewidth]{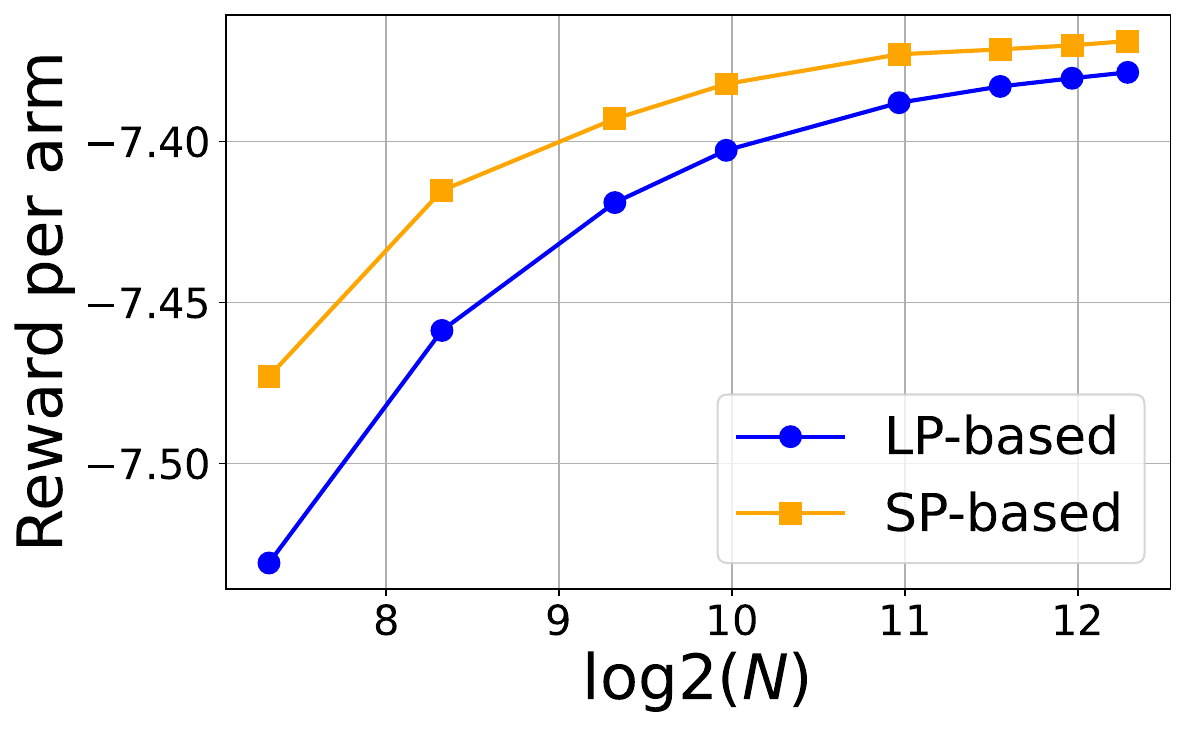}
        \caption{Reward per arm: unique fluid LP solution}
        \label{subfig:unique-1}
    \end{subfigure}
    \qquad
    \begin{subfigure}[b]{0.45\linewidth}
        \centering
        \includegraphics[width=\linewidth]{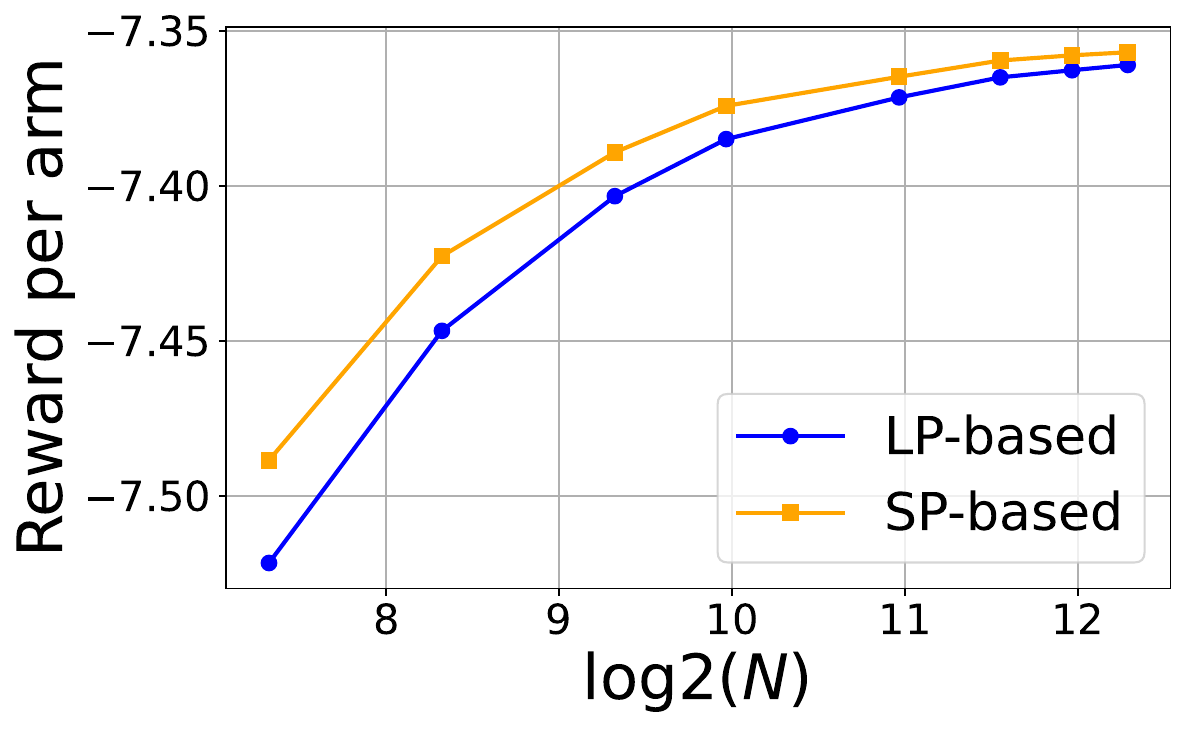}
        \caption{Reward per arm: non-unique fluid LP solution}
        \label{subfig:non-unique-1}
    \end{subfigure}

    \vspace{0.4cm}

    \begin{subfigure}[b]{0.43\linewidth}
        \centering
        \includegraphics[width=\linewidth]{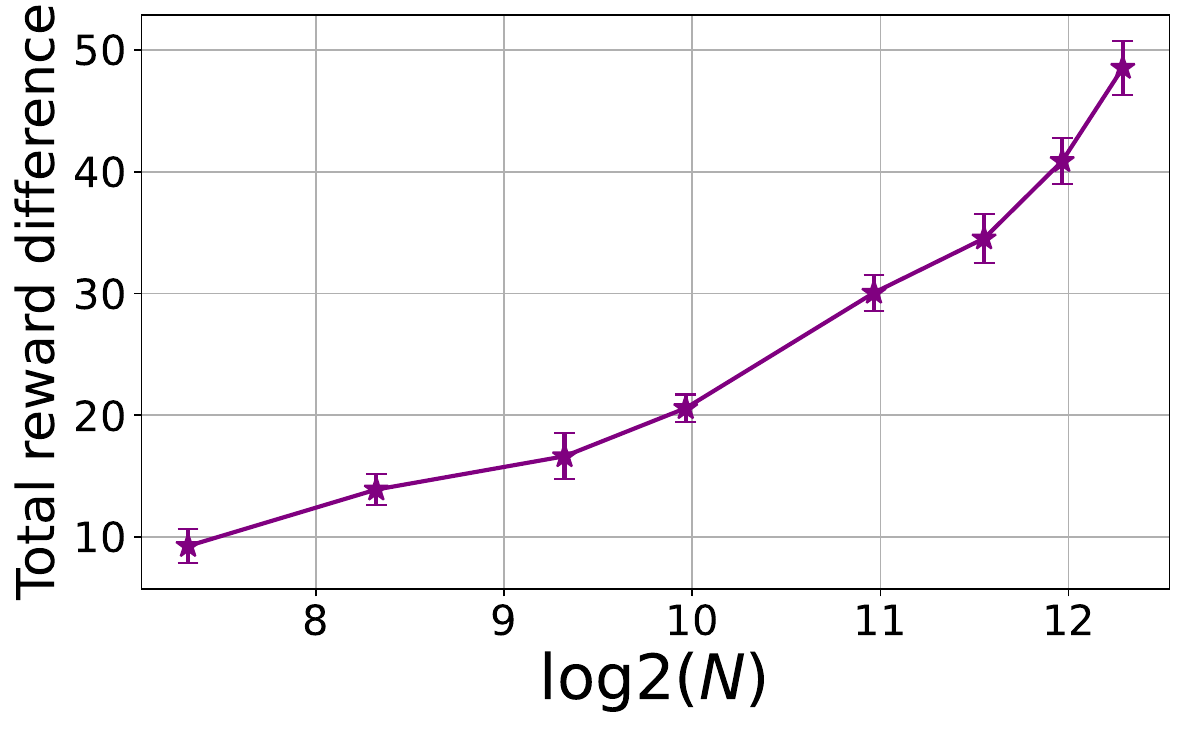}
        \caption{Total reward difference: unique fluid LP solution}
        \label{subfig:unique-2}
    \end{subfigure}
    \qquad
    \begin{subfigure}[b]{0.43\linewidth}
        \centering
        \includegraphics[width=\linewidth]{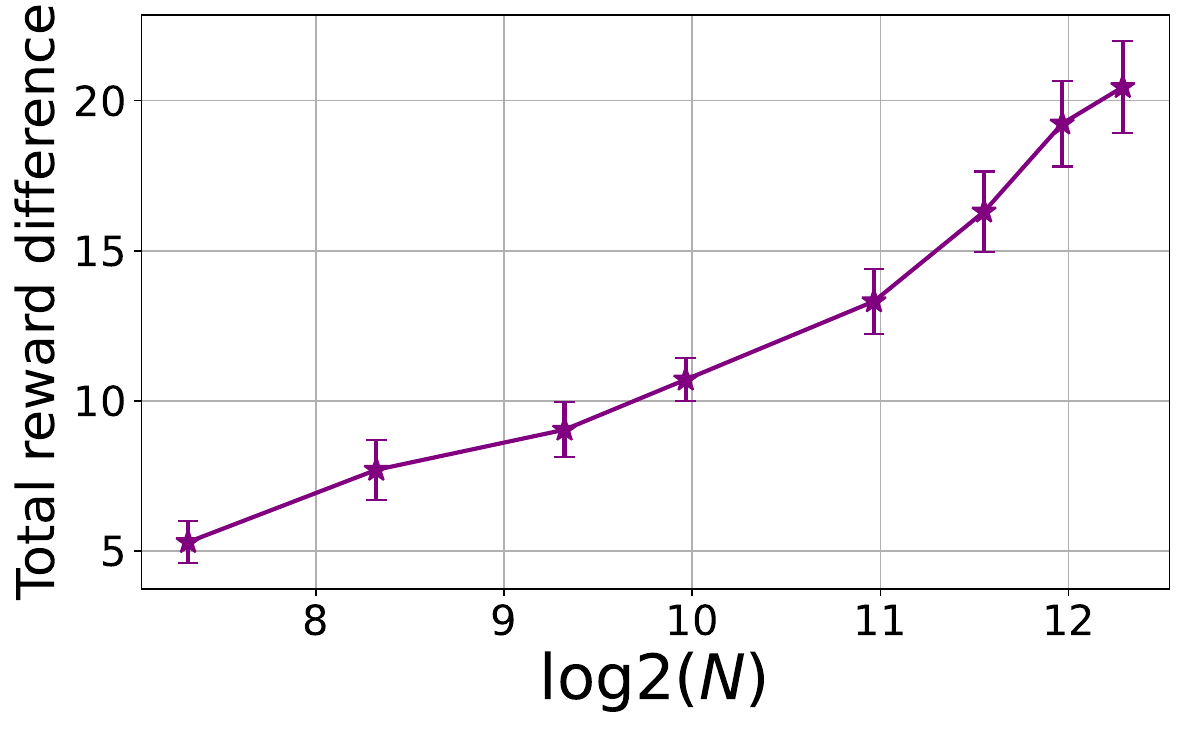}
        \caption{Total reward difference: non-unique fluid LP solution}
        \label{subfig:non-unique-2}
    \end{subfigure}
    \caption{Comparison of LP and SP-based policies on a machine maintenance example. Top row: reward per arm. Bottom row: total reward difference (SP minus LP) with 2-sigma error bars.}
    \label{fig:perf-3}
\end{figure}

In our experiments, we evaluated the performance of our SP-based policy (Algorithm~\ref{algo:SP-based}) on a machine maintenance problem \citep{glazebrook2005index,demirci2024restless}, an RMAB formulation motivated by real-world trade-offs in preventive maintenance and resource allocation. Each machine has five states, where a higher state represents a more deteriorated condition. The first state is a pristine state. Under action 1 (performing maintenance), a deteriorated machine has a high probability of returning to the pristine state. Under action 0 (not performing maintenance), the machine gradually deteriorates and has an increasing probability of breaking down as the state worsens. When a breakdown occurs, the machine must be replaced (returning to the pristine state) and incurs a high cost. The negative rewards (i.e., costs) reflect the nature of this model.

We performed two sets of experiments where the problem instances are degenerate: one set where the fluid LP solution is unique (Figures~\ref{subfig:unique-1}/\ref{subfig:unique-2}) and one set where it is not unique (Figures~\ref{subfig:non-unique-1}/\ref{subfig:non-unique-2}).
For both sets, computing the optimal policies is intractable. The resulting SP~\eqref{eq:problem-formulation-Gaussian-N-independent} is solved using the EDDP algorithm \citep[Algorithm 3]{lan2022complexity}, as discussed previously. The details of the experiments are presented in Appendix~\ref{append:numerics-and-practical}.
In each set of experiments, we evaluated the performance of our SP-based policy and the LP-based policy on a sequence of problems with increasing numbers of machines $N$, and compared the reward difference.
Figure~\ref{fig:perf-3} demonstrates that the improvement of our SP-based policy over the LP-based policy grows with $N$ in both settings.

We also evaluated the computational complexity of EDDP when varying $H$ and $S$. The results are shown in Figure~\ref{fig:EDDP-computation-time} and include 2-sigma error bars. Each recorded computation time in the figures was averaged from running EDDP on 1,000 RMAB instances, with each instance solved for 10 independent SAA realizations. The results in Figure~\ref{fig:EDDP-computation-time} clearly illustrate that the computational complexity under EDDP increases linearly with respect to the problem parameters $H$ and $S$. 

\begin{figure}[h]
    \centering
    \begin{subfigure}[b]{0.45\linewidth}
        \centering
        \includegraphics[width=\linewidth]{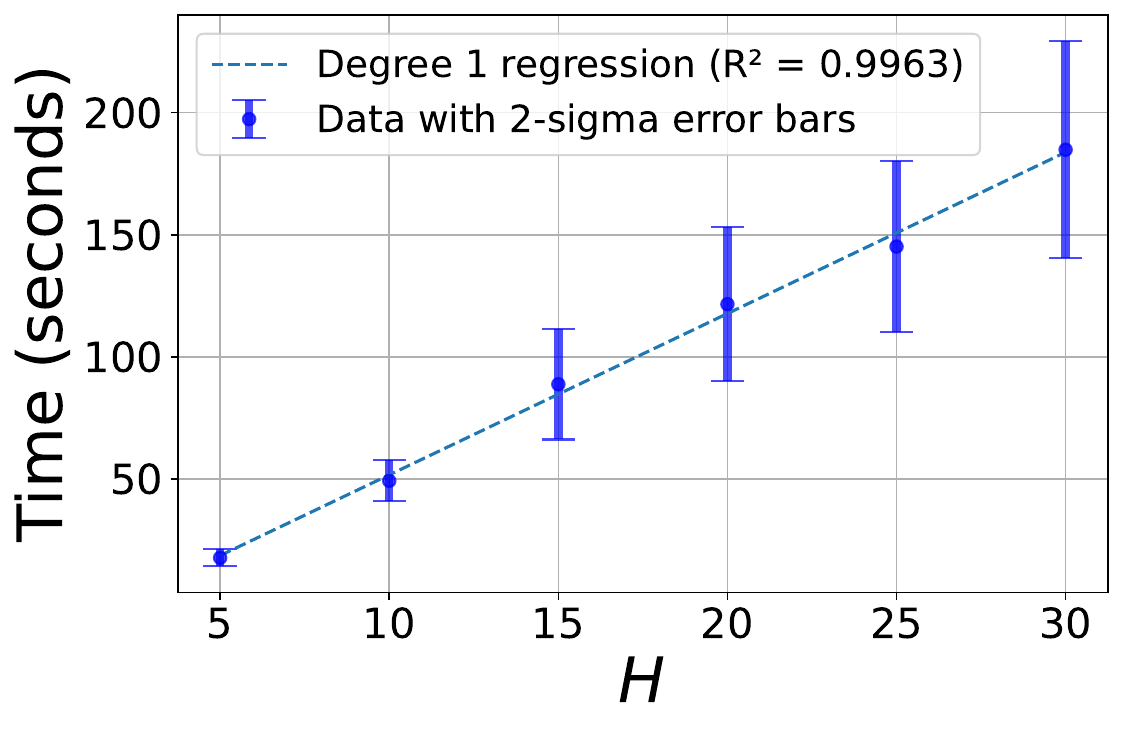}
        \caption{Fix $S=5$}
    \end{subfigure}
    \qquad
    \begin{subfigure}[b]{0.45\linewidth}
        \centering
        \includegraphics[width=\linewidth]{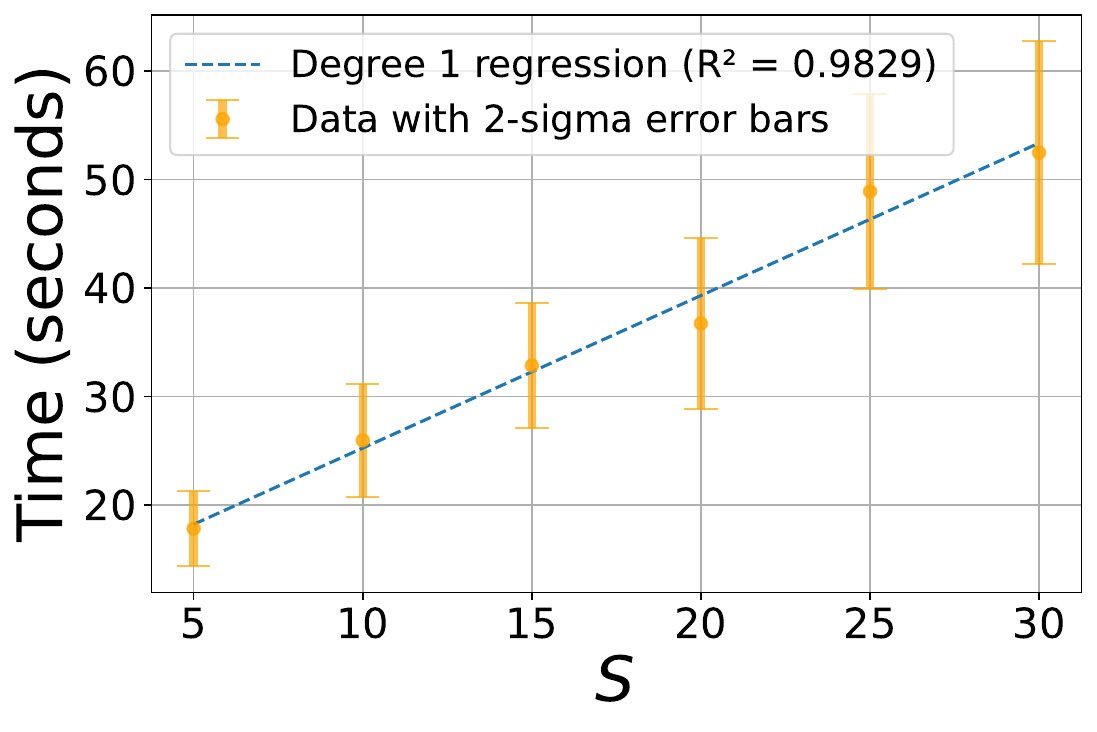}
        \caption{Fix $H=5$}
    \end{subfigure}
    \caption{The computation time of EDDP.}
    \label{fig:EDDP-computation-time}
\end{figure}
\section{Conclusion}

In this paper, we considered the finite-horizon Restless Multi-Armed Bandit (RMAB) problem and proposed a Stochastic-Programming-based (SP-based) policy, which is based on a carefully constructed Gaussian stochastic system that approximates the original system.
Our SP-based policy achieves an $\tilde{\calO}(1/N)$ optimality gap for degenerate RMABs under a uniqueness assumption, while existing LP-based policies based on the fluid approximation only guarantee a $\Theta(1/\sqrt{N})$ optimality gap.
Further, our SP-based policy can achieve an $\Omega(\sqrt{N})$ improvement over a large class of LP-based policies even without the uniqueness assumption.

\bibliography{reference}

\begin{thebibliography}{34}
\providecommand{\natexlab}[1]{#1}
\providecommand{\url}[1]{\texttt{#1}}
\expandafter\ifx\csname urlstyle\endcsname\relax
  \providecommand{\doi}[1]{doi: #1}\else
  \providecommand{\doi}{doi: \begingroup \urlstyle{rm}\Url}\fi

\bibitem[Avrachenkov et~al.(2024)Avrachenkov, Borkar, and
  Shah]{avrachenkov2024lagrangian}
Konstantin Avrachenkov, Vivek~S Borkar, and Pratik Shah.
\newblock Lagrangian index policy for restless bandits with average reward.
\newblock \emph{arXiv preprint arXiv:2412.12641}, 2024.

\bibitem[Bertsimas and Tsitsiklis(1997)]{bertsimas1997introduction}
Dimitris Bertsimas and John~N Tsitsiklis.
\newblock \emph{Introduction to linear optimization}, volume~6.
\newblock Athena Scientific Belmont, MA, 1997.

\bibitem[Borovkov(1964)]{Bor_64}
A.~Borovkov.
\newblock Some limit theorems in the theory of mass service, {I}.
\newblock \emph{Theory of Probability and its Applications}, 9\penalty0
  (4):\penalty0 550--565, 1964.

\bibitem[Borovkov(1965)]{Bor_65}
AA~Borovkov.
\newblock Some limit theorems in the theory of mass service, {II}.
\newblock \emph{Theory of Probability and its Applications}, 10\penalty0
  (3):\penalty0 375--400, 1965.

\bibitem[Brown and Smith(2020)]{Brown2020IndexPA}
David~B. Brown and James~E. Smith.
\newblock Index policies and performance bounds for dynamic selection problems.
\newblock \emph{Manag. Sci.}, 66:\penalty0 3029--3050, 2020.

\bibitem[Brown and Zhang(2023)]{brown2023fluid}
David~B Brown and Jingwei Zhang.
\newblock Fluid policies, reoptimization, and performance guarantees in dynamic
  resource allocation.
\newblock \emph{Operations Research}, 2023.

\bibitem[Demirci et~al.(2024)Demirci, Arts, and Houtum]{demirci2024restless}
Ece~Zeliha Demirci, Joachim Arts, and Geert-Jan~van Houtum.
\newblock A restless bandit approach for capacitated condition based
  maintenance scheduling.
\newblock \emph{Flexible Services and Manufacturing Journal}, pages 1--29,
  2024.

\bibitem[Gast et~al.(2023)Gast, Gaujal, and Yan]{gast2023linear}
Nicolas Gast, Bruno Gaujal, and Chen Yan.
\newblock Linear program-based policies for restless bandits: Necessary and
  sufficient conditions for (exponentially fast) asymptotic optimality.
\newblock \emph{Mathematics of Operations Research}, 2023.

\bibitem[Gittins(1979)]{Gittins79banditprocesses}
J.~C. Gittins.
\newblock Bandit processes and dynamic allocation indices.
\newblock \emph{Journal of the Royal Statistical Society, Series B}, pages
  148--177, 1979.

\bibitem[Glazebrook et~al.(2005)Glazebrook, Mitchell, and
  Ansell]{glazebrook2005index}
Kevin~D Glazebrook, HM~Mitchell, and PS~Ansell.
\newblock Index policies for the maintenance of a collection of machines by a
  set of repairmen.
\newblock \emph{European Journal of Operational Research}, 165\penalty0
  (1):\penalty0 267--284, 2005.

\bibitem[Hu and Frazier(2017)]{hu2017asymptotically}
Weici Hu and Peter Frazier.
\newblock An asymptotically optimal index policy for finite-horizon restless
  bandits.
\newblock \emph{arXiv preprint arXiv:1707.00205}, 2017.

\bibitem[Iglehart and Whitt(1970{\natexlab{a}})]{IglWhi_70}
Donald Iglehart and Ward Whitt.
\newblock Multiple channel queues in heavy traffic. {I}.
\newblock \emph{Advances in Applied Probability}, 2\penalty0 (1):\penalty0
  150--177, 1970{\natexlab{a}}.

\bibitem[Iglehart and Whitt(1970{\natexlab{b}})]{IglWhi_70_2}
Donald~L Iglehart and Ward Whitt.
\newblock Multiple channel queues in heavy traffic. {II: Sequences,} networks,
  and batches.
\newblock \emph{Advances in Applied Probability}, 2\penalty0 (2):\penalty0
  355--369, 1970{\natexlab{b}}.

\bibitem[Killian et~al.(2022)Killian, Xu, Biswas, and
  Tambe]{killian2022restless}
Jackson~A Killian, Lily Xu, Arpita Biswas, and Milind Tambe.
\newblock Restless and uncertain: Robust policies for restless bandits via deep
  multi-agent reinforcement learning.
\newblock In \emph{Uncertainty in Artificial Intelligence}, pages 990--1000.
  PMLR, 2022.

\bibitem[La~Scala and Moran(2006)]{la2006optimal}
Barbara~F La~Scala and Bill Moran.
\newblock Optimal target tracking with restless bandits.
\newblock \emph{Digital Signal Processing}, 16\penalty0 (5):\penalty0 479--487,
  2006.

\bibitem[Lan(2022)]{lan2022complexity}
Guanghui Lan.
\newblock Complexity of stochastic dual dynamic programming.
\newblock \emph{Mathematical Programming}, 191\penalty0 (2):\penalty0 717--754,
  2022.

\bibitem[Le~Ny et~al.(2006)Le~Ny, Dahleh, and Feron]{le2006multi}
Jerome Le~Ny, Munther Dahleh, and Eric Feron.
\newblock Multi-agent task assignment in the bandit framework.
\newblock In \emph{Proceedings of the 45th IEEE Conference on Decision and
  Control}, pages 5281--5286. IEEE, 2006.

\bibitem[Mate et~al.(2020)Mate, Killian, Xu, Perrault, and
  Tambe]{mate2020collapsing}
Aditya Mate, Jackson Killian, Haifeng Xu, Andrew Perrault, and Milind Tambe.
\newblock Collapsing bandits and their application to public health
  intervention.
\newblock \emph{Advances in Neural Information Processing Systems},
  33:\penalty0 15639--15650, 2020.

\bibitem[Mate et~al.(2021)Mate, Perrault, and Tambe]{mate2021risk}
Aditya Mate, Andrew Perrault, and Milind Tambe.
\newblock Risk-aware interventions in public health: Planning with restless
  multi-armed bandits.
\newblock In \emph{AAMAS}, pages 880--888, 2021.

\bibitem[Nakhleh et~al.(2021)Nakhleh, Ganji, Hsieh, Hou, Shakkottai,
  et~al.]{nakhleh2021neurwin}
Khaled Nakhleh, Santosh Ganji, Ping-Chun Hsieh, I~Hou, Srinivas Shakkottai,
  et~al.
\newblock {NeurWIN: Neural Whittle index network for restless bandits via deep
  RL}.
\newblock \emph{Advances in Neural Information Processing Systems},
  34:\penalty0 828--839, 2021.

\bibitem[Nocedal and Wright(1999)]{nocedal1999numerical}
Jorge Nocedal and Stephen~J Wright.
\newblock \emph{Numerical optimization}.
\newblock Springer, 1999.

\bibitem[Papadimitriou and Tsitsiklis(1999)]{Papadimitriou99thecomplexity}
Christos~H. Papadimitriou and John~N. Tsitsiklis.
\newblock The complexity of optimal queuing network control.
\newblock \emph{Math. Oper. Res}, pages 293--305, 1999.

\bibitem[Pereira and Pinto(1991)]{pereira1991multi}
Mario~VF Pereira and Leontina~MVG Pinto.
\newblock Multi-stage stochastic optimization applied to energy planning.
\newblock \emph{Mathematical programming}, 52:\penalty0 359--375, 1991.

\bibitem[Shapiro(2011)]{shapiro2011analysis}
Alexander Shapiro.
\newblock Analysis of stochastic dual dynamic programming method.
\newblock \emph{European Journal of Operational Research}, 209\penalty0
  (1):\penalty0 63--72, 2011.

\bibitem[Shapiro et~al.(2021)Shapiro, Dentcheva, and
  Ruszczynski]{shapiro2021lectures}
Alexander Shapiro, Darinka Dentcheva, and Andrzej Ruszczynski.
\newblock \emph{Lectures on stochastic programming: modeling and theory}.
\newblock SIAM, 2021.

\bibitem[Valiant and Valiant(2010)]{valiant2010clt}
Gregory Valiant and Paul Valiant.
\newblock A clt and tight lower bounds for estimating entropy.
\newblock In \emph{Electronic Colloquium on Computational Complexity (ECCC)},
  volume~17, page~9, 2010.

\bibitem[Whittle(1988)]{whittle-restless}
P.~Whittle.
\newblock Restless bandits: activity allocation in a changing world.
\newblock \emph{Journal of Applied Probability}, 25A:\penalty0 287--298, 1988.

\bibitem[Xiong and Li(2023)]{xiong2023finite}
Guojun Xiong and Jian Li.
\newblock Finite-time analysis of whittle index based q-learning for restless
  multi-armed bandits with neural network function approximation.
\newblock \emph{Advances in Neural Information Processing Systems},
  36:\penalty0 29048--29073, 2023.

\bibitem[Xiong et~al.(2022)Xiong, Li, and Singh]{xiong2022reinforcement}
Guojun Xiong, Jian Li, and Rahul Singh.
\newblock Reinforcement learning augmented asymptotically optimal index policy
  for finite-horizon restless bandits.
\newblock In \emph{Proceedings of the AAAI Conference on Artificial
  Intelligence}, volume~36, pages 8726--8734, 2022.

\bibitem[Yan(2024)]{yan2024optimal}
Chen Yan.
\newblock An optimal-control approach to infinite-horizon restless bandits:
  Achieving asymptotic optimality with minimal assumptions.
\newblock \emph{arXiv preprint arXiv:2403.11913}, 2024.

\bibitem[Zayas-Cab{\'a}n et~al.(2017)Zayas-Cab{\'a}n, Jasin, and
  Wang]{ZayasCabn2017AnAO}
Gabriel Zayas-Cab{\'a}n, Stefanus Jasin, and Guihua Wang.
\newblock An asymptotically optimal heuristic for general non-stationary
  finite-horizon restless multi-armed multi-action bandits.
\newblock \emph{Ross: Technology \& Operations (Topic)}, 2017.

\bibitem[Zhang(2024)]{zhang2024leveraging}
Jingwei Zhang.
\newblock Leveraging nondegeneracy in dynamic resource allocation.
\newblock \emph{Available at SSRN}, 2024.

\bibitem[Zhang and Frazier(2021)]{zhang2021restless}
Xiangyu Zhang and Peter~I Frazier.
\newblock Restless bandits with many arms: Beating the central limit theorem.
\newblock \emph{arXiv preprint arXiv:2107.11911}, 2021.

\bibitem[Ziegler(2012)]{ziegler2012lectures}
G{\"u}nter~M Ziegler.
\newblock \emph{Lectures on polytopes}, volume 152.
\newblock Springer Science \& Business Media, 2012.

\end{thebibliography}
\bibliographystyle{plainnat}

\newpage

\appendix

\begin{table}[H]
    \centering
    {\small
    \begin{tabular}{|c|c|}
    \hline
         $\mathbb{N}$ & the set of natural numbers \\
         \hline
         $\mathbb{R}$ & the set of real numbers \\
         \hline
         $N$ & the number of arms \\
         \hline
         $H$ & the number of steps in the finite-horizon RMAB\\
         \hline
         $\cal S$ & the set of states \\
         \hline
         $ \cal A$& the set of actions\\
         \hline
         $r_h(s,a)$ & the reward of taking action $a$ in state $s$ at step $h$\\
         \hline
           $\rmax$ & the maximum reward $\max_{s,a,h} r_h(s,a)$\\
           \hline
             $\pp_h(s'_n|s_n,a_n)$ & the probability that arm $n$ is in state $s'_n$ after taking action $a_n$ in state $s_n$ at step $h$\\
         \hline
         $\bX_h$ and $\tilde{\bX}_h$ &  the states at step $h$ in the $N$-system and the Gaussian stochastic system, respectively \\
         \hline
        $\bY_h$ and $\tilde{\bY}_h$ &  the actions at step $h$ in the $N$-system and the Gaussian stochastic system, respectively \\
         \hline
         $\Delta^S$ & the probability simplex of dimension \(S\) \\
         \hline
 $\simpXN$ & the discrete subset of $\bx \in \Delta^S$ for which $N \bx \in \bbN^S$  \\
 \hline
${\calX}_{z_h\Ndelta}$ & a neighborhood of state $\bx_h^*:$ $\left\{\bx \in \Delta^S: \|\bx - \bx^*_h\|_\infty\leq z_h\Ndelta\right\}$ \\
\hline
${\cal Y}_{\kappa z_h\Ndelta}$ &  a neighborhood of action $\by^*_h:$ $\left\{\by_h \in \Delta^{2S}: \|\by_h - \by^*_h\|_\infty\leq \kappa z_h\Ndelta, \; \sum_{s=1}^{S} y_h(s,1) = \alpha \right\}$ \\
\hline
${\cal Y}_{\bx}$ &  feasible actions for state $\bx:$ $\left\{\by_h \in \Delta^{2S}: \; \by_h(\cdot,1)+\by_h(\cdot,0)=\bx, \; \sum_{s=1}^{S} y_h(s,1) = \alpha \right\}$ \\
\hline
 $\Gamma_{h}(\by^*_h)$ & the covariance matrix of Gaussian random vector $\bZ_h$, defined in \eqref{eq:Gamma-definition} \\ 
 \hline
 ${\cal Y}^s_{\kappa z_h\Ndelta}$ &  the set $\left\{\by_h(s,\cdot) \in \mathbb{R}: \|\by_h(s,\cdot)-\by^*_h(s,\cdot)\|_\infty\leq \kappa z_h \Ndelta\right\}$ \\
\hline
${\cal Y}^s_{\bx}$ & the set $\left\{\by_h(s,\cdot) \in \mathbb{R}: \by_h(s,\cdot)\geq \bzero, \; y_h(s,1) + y_h(s,0) = x(s)\right\}.$\\
\hline
$\Ndelta$ & $\Ndelta=2 \log N/\sqrt N$  \\
\hline
   $\VNopt(\xini,1)$ & the optimal solution to the $N$-system problem (or the value function of the $N$-system)\\
         \hline
                 $V^N_\pi(\x_h,h)$ & the value function of policy $\pi$ in the $N$-system\\
         \hline
                          $Q^N_\pi(\x_h,\by_h,h)$ & the Q-function of policy $\pi$ in the $N$-system\\
         \hline
         $\pi^{N,*}$ & the locally-optimal policy for the $N$-system\\
         \hline
            $\VLP(\xini,1)$ & the optimal solution to the fluid LP (or the value function of the fluid LP)\\
         \hline
         $\by^*$ & an optimal solution of the fluid LP \\
         \hline
                 $\tilde{V}^N_\pi(\x_h,h)$ & the value function of policy $\pi$ in the Gaussian stochastic system\\
         \hline
                          $\tilde{Q}^N_\pi(\x_h,\by_h,h)$ & the Q-function of policy $\pi$ in the Gaussian stochastic system\\
         \hline
            $\tilde{\pi}^{N,*}$ & the locally-SP-optimal policy\\
         \hline
    \end{tabular}
    }
    \vspace{0.2cm}
    \caption{Notation Table}
    \label{tab:notation}
\end{table}

\section{Lists of frequently used notations and constants}  \label{append:list-of-notations-and-constants}

Frequently used notations are included in Table \ref{tab:notation}. Below is a list of frequently used constants that are {\em independent} of $N$:
\begin{align*}
  \sigma :&  = \inf_{\substack{\by\in \calU \\ \by\neq\by^{*}}} \frac{\br \bigl(\by^{*} - \by\bigr)^\top}{\norminf{\by^{*} - \by}}. & \quad \text{first defined in \eqref{eq:sigma-b-min-expression-I}}; \\
  \kappa &> \max \left\{ 1 + 6S, \, 3 + 2 \rmax HS/\sigma \right\}; \\
  L_h &:= \text{Lipschitz constant of $\phi_h(\cdot)$ satisfying $L_h \le 2S$}, & \quad \text{first defined in \eqref{ea:function-phi}}; \\
  z_1 &:= 1, \; z_{h+1} = \sqrt{S} \ (\kappa L_h z_h + 1), \; 1 \le h \le H-1, & \quad \text{first defined in \eqref{eq:recursive-relation}}.
\end{align*}

\section{Rounding procedure}
\label{app:details-Gaussian}

Throughout this paper, we represent systems and controls as ``fractions'' of the total population $N$ of arms, using vectors such as $\bx$, $\by$. In the unnormalized-scale system, the coordinates of $N \bx$ and $N \by$ therefore need to be integers. If these quantities are not integers, they are implicitly understood to be appropriately rounded via some \emph{rounding procedure}. For example, after obtaining $\by$ from the SP-based policy, we need to applying rounding before using it for the $N$-system.

Formally, a rounding procedure is to solve the following problem: Given an integer $N \in \mathbb{N}$ and $\bX \in \simpXN$ so that $N \bX$ have integer coordinates, then, for any $\bY \in \calY_{\bX}$,   
we need to construct $\bY^N = \round(\bY)\in \calY_{\bX} $ for which $N \bY^N$ have integer coordinates.

This can be accomplished as follows. Set
\begin{equation}   \label{eq:rounding-1}
  Y^N(s,1) := \frac{\lfloor N Y(s,1) \rfloor}{N}, \  Y^N(s,0) := \frac{\lceil N Y(s,0) \rceil}{N}, \mbox{ for $1 \le s \le S-1$};
\end{equation}
\begin{equation}  \label{eq:rounding-2}
  Y^N(S,1) := \alpha - \sum_{s=1}^{S-1} Y^N(s,1), \ Y^N(S,0) := 1 - \alpha - \sum_{s=1}^{S-1} Y^N(s,0).
\end{equation}
It is then easy to verify that, $\bY^N := \round(\bY)$ defined in \eqref{eq:rounding-1}-\eqref{eq:rounding-2} indeed satisfies all these required conditions. Furthermore, by construction, it holds true that 
\begin{equation} \label{eq:rounding-error-bound}
  \norminf{\bY - \round(\bY)} \le \frac{1}{N} = \calO(\frac{1}{N}),
\end{equation}
independently of $\bX$ and $\bY \in \calY_{\bX}$.

\section{Proof of Theorem~\ref{thm:global}}
\label{appendix:proof-global-optimality}

\GlobalOptimality*

As stated in the proof outline, the proof of Theorem~\ref{thm:global} is based on the following three results.

\begin{restatable}[Proximity of optimal policy]{lemma}{SufficientCondition}  \label{prop:sufficient-condition-of-assumption}
Under Assumption~\ref{ass:opt-lp-distance}, let $\by^*$ be the unique optimal solution to $\VLP(\xini,1)$ at $h=1$. Then the $N$-system has an optimal policy $\piNopt$ in the policy class $\Pi_{\Ndelta}(\by^*)$, i.e., for any $\bx_h$ with $\|\bx_h-\bx^*_h\|_\infty\leq z_h\Ndelta$, we have $\|\piNopt(\bx_h,h) - \by_h^*\|_\infty\leq \kappa z_h\Ndelta$.  
\end{restatable}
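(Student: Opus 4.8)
The plan is to prove the statement pointwise and by contradiction: at every step $h$ and every state $\bx_h$ with $\norminf{\bx_h - \bx^*_h} \le z_h \Ndelta$, I will show that any optimal action $\by_h = \piNopt(\bx_h, h)$ of the $N$-system must satisfy $\norminf{\by_h - \by^*_h} \le \kappa z_h \Ndelta$. Outside the neighborhood I simply let $\piNopt$ follow $\pip$, which is legitimate because (by Lemma~\ref{lem:high-proba-bound}) the state leaves the neighborhood only with probability $\calO(1/N^{\log N})$ and therefore does not affect optimality at the $\tilde{\calO}(1/N)$ scale of interest. With this reduction, it suffices to sandwich the optimal value $\VNopt(\bx_h, h)$ between a lower bound from a known good policy and an upper bound that degrades linearly in $\norminf{\by_h - \by^*_h}$, and to observe that a deviation exceeding $\kappa z_h \Ndelta$ is incompatible with the two bounds.

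First I would record a single-step \emph{sharpness} estimate coming from the uniqueness constant $\sigma$. Writing $Q^{\LP}(\by_h, h) := \br_h \by_h^\top + \VLP(\sum_{s,a} y_h(s,a)\pp_h(\cdot \mid s,a), h+1)$ for the fluid continuation value, I claim that for every $\by_h$ feasible at $\bx^*_h$ one has $\VLP(\bx^*_h, h) - Q^{\LP}(\by_h, h) \ge \sigma \norminf{\by_h - \by^*_h}$. This follows from the global bound $\VLP(\xini,1) - \br\by^\top \ge \sigma \norminf{\by - \by^*}$ encoded by $\sigma$ under Assumption~\ref{ass:opt-lp-distance}: prepending the optimal prefix $\by^*_1,\dots,\by^*_{h-1}$ and appending the LP-optimal continuation to $\by_h$ yields a feasible trajectory whose objective deficit equals $\VLP(\bx^*_h, h) - Q^{\LP}(\by_h, h)$ and whose $\norminf{\cdot}$ distance from $\by^*$ is at least $\norminf{\by_h - \by^*_h}$.

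Next I would assemble the two bounds on $\VNopt(\bx_h, h)$. By the Bellman equation and the pointwise LP upper bound $\VNopt(\cdot, h+1) \le \VLP(\cdot, h+1)$, together with concavity of the fluid value function in its state argument and Jensen's inequality (the $N$-system next state has mean exactly $\sum_{s,a} y_h(s,a)\pp_h(\cdot \mid s,a)$), I get $\VNopt(\bx_h, h) = \br_h\by_h^\top + \mathbb{E}[\VNopt(\bX_{h+1},h+1)] \le Q^{\LP}(\by_h, h)$. Since $\by_h$ is feasible at $\bx_h$ rather than at $\bx^*_h$, I invoke an action-adjustment step producing $\by'_h$ feasible at $\bx^*_h$ with $\norminf{\by_h - \by'_h} \le C\norminf{\bx_h - \bx^*_h} \le C z_h \Ndelta$ (the constant $C = \calO(S)$ is the origin of the $6S$ term in $\kappa$), and combine it with Lipschitz continuity of $Q^{\LP}(\cdot, h)$ in the action and the sharpness estimate to obtain, whenever $\norminf{\by_h - \by^*_h} > \kappa z_h \Ndelta$, the upper bound $\VNopt(\bx_h, h) \le \VLP(\bx^*_h, h) - \left[\sigma(\kappa - C) - \calO(S)\right] z_h \Ndelta$. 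For the matching lower bound I use that a standard LP-based policy achieves $\VNopt(\bx_h, h) \ge \VLP(\bx_h, h) - \calO(1/\sqrt{N}) \ge \VLP(\bx^*_h, h) - \calO(S)\, z_h \Ndelta - \calO(1/\sqrt{N})$, again by Lipschitz continuity of $\VLP$ in the state.

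Finally I would close the contradiction. Because $z_h \Ndelta = \tilde{\Theta}(1/\sqrt{N})$ carries a $\log N$ factor, the residual $\calO(1/\sqrt{N})$ fluid-policy slack is $o(z_h \Ndelta)$, so for all large $N$ the upper and lower bounds are compatible only if $\sigma(\kappa - C) - \calO(S) \le \calO(S)$, which the prescribed $\kappa = \max\{2 + 6S,\, 3 + 2\rmax HS/\sigma\}$ violates; hence the optimal action lies in the $\kappa z_h \Ndelta$-ball and $\piNopt \in \pidelta$. I expect the main obstacle to be the single-step sharpness transfer combined with the action-adjustment lemma: converting the trajectory-level uniqueness constant $\sigma$ into a clean per-step loss while absorbing the state-feasibility mismatch $\bx_h \neq \bx^*_h$ into constants small enough relative to $\kappa$. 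The secondary delicate point is verifying that the $\log N$ factor in $\Ndelta$ indeed dominates the unavoidable $\calO(1/\sqrt{N})$ slack uniformly in $N$.
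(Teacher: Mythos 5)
Your proposal follows essentially the same route as the paper's proof: a pointwise contradiction that combines (i) the anti-Lipschitz (``sharpness'') bound $\QLP(\bx^*_h,\by^*_h,h)-\QLP(\bx^*_h,\overline{\by}_h,h)\ge\sigma\norminf{\by^*_h-\overline{\by}_h}$ derived from uniqueness exactly as you describe (prepend the optimal prefix, append the optimal continuation), (ii) an action mapping absorbing the feasibility mismatch $\bx_h\neq\bx^*_h$ at cost $\calO(S)\norminf{\bx_h-\bx^*_h}$, (iii) the Jensen/concavity bound $Q^N\le\QLP$, and (iv) the $\calO(1/\sqrt{N})$ gap between $\QLP$ and $Q^N$ for a feasible action, with the contradiction closed by the $\log N$ factor in $\Ndelta$. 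The one substantive piece you leave as a black box is the action-adjustment step together with the stability of $\QLP$ under the \emph{simultaneous} state-and-action change (``Lipschitz continuity in the action'' does not literally apply since the state argument changes too, and $\QLP(\bx_h,\by'_h,h)$ is not even defined); the paper devotes Claim~\ref{claim:properties-mappings} and a lifted LP with an auxiliary absorbing state to exactly this point, which you correctly flag as the main obstacle.
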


In other words, under Assumption~\ref{ass:opt-lp-distance}, for all $1 \le h \le H$, an optimal control for the $N$-system is guaranteed to lie within $\kappa z_h \Ndelta$-neighborhood of the corresponding $\by^*_h$.

The next two lemmas are very similar. The difference is that Lemma \ref{lem:vd} compares the performance of the {\em locally-SP-optimal} policy in the {\em Gaussian stochastic} system with the {\em locally-optimal} policy in the {\em $N$-system}.  Lemma \ref{lem:ed} compares the performance of the locally-SP-optimal policy, $\tilde{\pi}^{N,*}$, in two different systems (Gaussian and $N$-system).

\begin{restatable}[Value difference]{lemma}{ValueDifference} \label{lem:vd}
Consider any time step $h$ with $1\le h\le H$.
For any $\bx$ such that $\bx \in{\cal X}_{z_h\Ndelta} \bigcap \simpXN$, and any $\by_h$ such that $\by_h \in{\cal Y}_{\bx} \bigcap {\cal Y}_{\kappa z_h\Ndelta} \bigcap \simpYN$,
the Q-function and value function differences between the Gaussian stochastic system and the $N$-system under their respectively optimal policies can be bounded as
\begin{align*}
\left|\tilde{Q}^N_{{\tilde{\pi}^{N,*}}}(\bx, \by_h, h)-Q^N_{\pi^{N,*}}(\bx, \by_h, h)\right| = & \ \tON,\\ 
\left|\tilde{V}^N_{{\tilde{\pi}^{N,*}}}(\bx, h)-V^N_{\pi^{N,*}}(\bx, h)\right| = & \ \tON.
\end{align*} 
\end{restatable}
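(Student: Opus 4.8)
The plan is to prove both bounds simultaneously by backward induction on $h$, running from $h=H$ down to $h=1$ and deriving the value-function bound from the Q-function bound at each step. For the base case $h=H$ there is no future reward, so $\tilde{Q}^N_{\tilde{\pi}^{N,*}}(\bx,\by_H,H)=Q^N_{\pi^{N,*}}(\bx,\by_H,H)=\br_H\by_H^\top$ exactly, and the two value functions, being the maxima of the same immediate reward over the same feasible set $\calY_\bx\cap\calY_{\kappa z_H\Ndelta}\cap\simpYN$, coincide. In general, once the Q-function bound is established at step $h$, the value-function bound follows from the Bellman relation $\tilde{V}^N_{\tilde{\pi}^{N,*}}(\bx,h)=\max_{\by_h}\tilde{Q}^N_{\tilde{\pi}^{N,*}}(\bx,\by_h,h)$ (and its $N$-system analogue), together with $|\max_{\by}f(\by)-\max_{\by}g(\by)|\le\sup_{\by}|f(\by)-g(\by)|$, where the supremum ranges over the feasible set $\calY_\bx\cap\calY_{\kappa z_h\Ndelta}\cap\simpYN$ on which the Q-function bound holds.

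For the inductive step I would write out the Bellman decomposition. Since both Q-functions are evaluated at the \emph{same} action $\by_h$, the immediate reward cancels, leaving
\begin{equation*}
\tilde{Q}^N_{\tilde{\pi}^{N,*}}(\bx,\by_h,h)-Q^N_{\pi^{N,*}}(\bx,\by_h,h)=\Ex{\tilde{V}^N_{\tilde{\pi}^{N,*}}(\tilde{\bX}_{h+1},h+1)}-\Ex{V^N_{\pi^{N,*}}(\bX_{h+1},h+1)},
\end{equation*}
where $\bX_{h+1}$ and $\tilde{\bX}_{h+1}$ are the next states in the $N$-system and the Gaussian system under the common action $\by_h$. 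Inserting $\pm\Ex{\tilde{V}^N_{\tilde{\pi}^{N,*}}(\bX_{h+1},h+1)}$ splits this into (A) $\Ex{\tilde{V}^N_{\tilde{\pi}^{N,*}}(\tilde{\bX}_{h+1},h+1)}-\Ex{\tilde{V}^N_{\tilde{\pi}^{N,*}}(\bX_{h+1},h+1)}$, which isolates the discrepancy between the two next-state laws under one common test function, and (B) $\Ex{\tilde{V}^N_{\tilde{\pi}^{N,*}}(\bX_{h+1},h+1)}-\Ex{V^N_{\pi^{N,*}}(\bX_{h+1},h+1)}$, which is controlled pointwise by the induction hypothesis. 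For (B), the hypothesis gives $\tON$ on the event $\{\bX_{h+1}\in\calX_{z_{h+1}\Ndelta}\}$; off this event both value functions revert to the predefined policy $\pip$ and are bounded by $\calO(\rmax H)$, while Lemma~\ref{lem:high-proba-bound} bounds the probability of leaving the neighborhood by $\calO(1/N^{\log N})$, so its contribution is absorbed into $\tON$.

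The crux is term (A), and it rests on the two ingredients flagged in the outline. First, both $\bX_{h+1}$ and $\tilde{\bX}_{h+1}$ share the mean $\sum_{s,a}y_h(s,a)\pp_h(\cdot\mid s,a)$ and fluctuate at scale $1/\sqrt N$; a quantitative (Berry--Esseen-type) CLT for the standardized multinomial sum gives a distance $\tONN$ between the standardized fluctuations, so after the $1/\sqrt N$ rescaling $\wass{\bX_{h+1}}{\tilde{\bX}_{h+1}}=\tON$. Two subtleties must be absorbed here: the $N$-system fluctuation has covariance $\Gamma_h(\by_h)$ whereas $\bZ_h$ uses $\Gamma_h(\by^*_h)$, but since $\Gamma_h$ is linear in the action and $\norminf{\by_h-\by^*_h}=\tONN$ on $\pidelta$, the induced Gaussian covariance gap contributes at the same order (a finer, second-order estimate exploiting the Gaussian smoothing of the value function can be used to keep it within $\tON$); and the simplex projection in \eqref{eq:X-c} is inactive except on an event of probability $\calO(1/N^{\log N})$ by Lemma~\ref{lem:high-proba-bound}. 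Second, I would bound term (A) by (Lipschitz constant of $\tilde{V}^N_{\tilde{\pi}^{N,*}}(\cdot,h+1)$) $\times\,\wass{\bX_{h+1}}{\tilde{\bX}_{h+1}}$ via Kantorovich--Rubinstein duality. I expect this Lipschitz estimate to be the main obstacle: because $\tilde{\pi}^{N,*}$ is a \emph{fixed} policy rather than one re-optimized at the perturbed state, comparing values at two nearby states requires constructing, for the perturbed state, a feasible action that stays in $\calY_{\kappa z_{h+1}\Ndelta}$ and tracks the action chosen at the original state --- the ``action mapping'' alluded to in the outline --- and then propagating the resulting coupling across the remaining horizon while keeping the accumulated Lipschitz constant independent of $N$. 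Combining (A) and (B) closes the induction, and the max argument then yields both stated bounds.
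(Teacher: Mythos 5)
Your proposal follows essentially the same route as the paper's proof: backward induction with the same $\pm\,\Ex{\tilde V(\bX_{h+1},h+1)}$ decomposition, the induction hypothesis plus the high-probability bound of Lemma~\ref{lem:high-proba-bound} for one term, and a Wasserstein coupling (quantitative multivariate CLT, covariance mismatch controlled by $\norminf{\by_h-\by^*_h}=\tilde\Theta(1/\sqrt N)$, rare projection event) combined with the action-mapping-based almost-local-Lipschitz property of $\tilde V$ for the other. The only small discrepancy is at $h=H$, where the two Q-functions do not coincide exactly but differ by an $\calO(1/N)$ rounding error (the $N$-system applies $\round(\by_H)$ when the optimal Gaussian-system action is not in $\simpYN$), which the paper absorbs via a one-sided max comparison and which does not affect the $\tON$ bound.
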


\begin{restatable}[Evaluation difference]{lemma}{EvaluationDifference}  \label{lem:ed}
Consider any time step $h$ with $1\le h\le H$.
For any $\bx$ such that $\bx \in{\cal X}_{z_h\Ndelta} \bigcap \simpXN$, and any $\by_h$ such that $\by_h \in{\cal Y}_{\bx} \bigcap {\cal Y}_{\kappa z_h\Ndelta} \bigcap \simpYN$,
the Q-function and value function differences between the Gaussian stochastic system and the $N$-system under the same policy $\tilde{\pi}^{N,*}$ can be bounded as
\begin{align*}
\left|\tilde{Q}^N_{{\tilde{\pi}^{N,*}}}(\bx, \by_h, h)-Q^N_{\tilde{\pi}^{N,*}}(\bx, \by_h, h)\right| = & \ \tON. \\ 
\left|\tilde{V}^N_{{\tilde{\pi}^{N,*}}}(\bx, h)-V^N_{\tilde{\pi}^{N,*}}(\bx, h)\right| = & \ \tON.
\end{align*} 
\end{restatable}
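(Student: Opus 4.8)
The plan is to prove both displayed bounds simultaneously by backward induction on $h$, carrying the value-function bound as the inductive invariant and deducing the $Q$-function bound at each step from it. Since $\tilde{V}^N_{\tilde{\pi}^{N,*}}$ and $V^N_{\tilde{\pi}^{N,*}}$ are evaluated under the \emph{same} policy, the value bound follows from the $Q$-bound by taking $\by_h = \tilde{\pi}^{N,*}(\bx,h)$, which lies in $\calY_{\bx} \cap {\cal Y}_{\kappa z_h\Ndelta} \cap \simpYN$ precisely because $\tilde{\pi}^{N,*}\in\pidelta$; thus the two statements are really one induction. The base case $h=H$ is immediate: with no future transition, both $Q$-functions equal the common immediate reward $\br_H \by_H^\top$, so the difference is exactly $0$.

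For the inductive step, assume $|\tilde{V}^N_{\tilde{\pi}^{N,*}}(\cdot, h+1) - V^N_{\tilde{\pi}^{N,*}}(\cdot, h+1)| = \tON$ on $\calX_{z_{h+1}\Ndelta}$. Expanding the two $Q$-functions at step $h$, the immediate rewards cancel and the difference equals $\mathbb{E}[\tilde{V}^N_{\tilde{\pi}^{N,*}}(\tilde{\bX}_{h+1}, h+1)] - \mathbb{E}[V^N_{\tilde{\pi}^{N,*}}(\bX_{h+1}, h+1)]$, where $\tilde{\bX}_{h+1}$ and $\bX_{h+1}$ are the next states under action $\by_h$ in the Gaussian and $N$-systems. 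Inserting the cross term $\mathbb{E}[\tilde{V}^N_{\tilde{\pi}^{N,*}}(\bX_{h+1}, h+1)]$ splits this into term (A) $= \mathbb{E}[\tilde{V}^N_{\tilde{\pi}^{N,*}}(\tilde{\bX}_{h+1})] - \mathbb{E}[\tilde{V}^N_{\tilde{\pi}^{N,*}}(\bX_{h+1})]$ (a distribution difference) and term (B) $= \mathbb{E}[\tilde{V}^N_{\tilde{\pi}^{N,*}}(\bX_{h+1}) - V^N_{\tilde{\pi}^{N,*}}(\bX_{h+1})]$ (a pointwise value difference). Term (B) is dispatched by the induction hypothesis together with Lemma~\ref{lem:high-proba-bound}: starting from $\bx \in \calX_{z_h\Ndelta}$ with $\by_h$ within $\kappa z_h\Ndelta$ of $\by^*_h$, the next state $\bX_{h+1}$ lands in $\calX_{z_{h+1}\Ndelta}$ with probability $1 - \calO(1/N^{\log N})$ (this is exactly what the recursion $z_{h+1} = \sqrt{S}(\kappa L_h z_h + 1)$ is engineered to ensure); on this event the induction bound gives $\tON$, and on the complement both value functions are bounded by $\rmax H$ with the $\pip$-fallback active in both systems, contributing $\calO(1/N^{\log N}) \cdot 2\rmax H = \OlogN$. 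Hence (B) $= \tON$.

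Term (A) is the crux. It is one function $\tilde{V}^N_{\tilde{\pi}^{N,*}}(\cdot, h+1)$ integrated against two next-state laws, so by Kantorovich--Rubinstein it is bounded by a Lipschitz constant times $\wass{\mathrm{law}(\tilde{\bX}_{h+1})}{\mathrm{law}(\bX_{h+1})}$, provided $\tilde{V}^N_{\tilde{\pi}^{N,*}}(\cdot, h+1)$ is Lipschitz with an $N$-independent constant (up to logarithmic factors). I would bound the Wasserstein factor by coupling: writing $\bX_{h+1} = \mu + \bW/\sqrt{N}$ and $\tilde{\bX}_{h+1} = \mu + \bZ_h/\sqrt{N}$ with common mean $\mu = \sum_{s,a} y_h(s,a)\pp_h(\cdot\mid s,a)$, a multivariate Berry--Esseen estimate gives $\wass{\mathrm{law}(\bW)}{\calN(\bzero,\Gamma_h(\by_h))} = \calO(1/\sqrt{N})$, while the covariance mismatch $\|\Gamma_h(\by_h) - \Gamma_h(\by^*_h)\| = \calO(\|\by_h - \by^*_h\|_\infty) = \tilde{\calO}(1/\sqrt{N})$ (recall $\Gamma_h$ is evaluated at $\by^*_h$, not $\by_h$) contributes another $\tilde{\calO}(1/\sqrt{N})$; the projection onto $\Delta^S$ is nonexpansive and hence harmless. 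Dividing by $\sqrt{N}$ yields $\wass{\mathrm{law}(\tilde{\bX}_{h+1})}{\mathrm{law}(\bX_{h+1})} = \tON$, so (A) $= \tON$, completing the step.

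The main obstacle I anticipate is establishing the (approximate, local) Lipschitz continuity of $\tilde{V}^N_{\tilde{\pi}^{N,*}}(\cdot, h+1)$ that term (A) requires. Because the value function is evaluated under the \emph{fixed} policy $\tilde{\pi}^{N,*}$ rather than under optimization, perturbing the state changes the action the policy selects, and these actions must be matched across nearby trajectories. I would handle this by constructing an explicit action mapping that pairs the controls at two nearby states while keeping the matched actions feasible and within the $\kappa z_{h+1}\Ndelta$-neighborhood, then propagating the resulting Lipschitz constant through the recursion $z_{h+1} = \sqrt{S}(\kappa L_h z_h + 1)$. The delicate points are controlling the boundary behaviour where the policy switches to $\pip$, and ensuring the Lipschitz constant remains $N$-independent (up to logs) as it accumulates across the $H$ steps.
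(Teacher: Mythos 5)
Your proposal is correct and follows essentially the same route as the paper: the paper proves Lemma~\ref{lem:ed} by the backward induction of Lemma~\ref{lem:vd} (simplified because the same policy is evaluated in both systems), using exactly your decomposition into a pointwise value difference handled by the induction hypothesis plus the high-probability bound of Lemma~\ref{lem:high-proba-bound}, and a distribution difference handled by coupling via the Wasserstein bound of Lemma~\ref{lem:Wasserstein-multivariate} (whose three error sources --- CLT rate, covariance mismatch $\Gamma_h(\by_h)$ vs.\ $\Gamma_h(\by^*_h)$, and the projection --- you identify) together with the almost-locally-Lipschitz property of $\tilde{V}$ established through an explicit action mapping, which you correctly flag as the main technical obstacle. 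The only cosmetic discrepancy is that the base case $h=H$ incurs an $\calO(1/N)$ rounding error rather than being exactly zero, which does not affect the $\tON$ conclusion.
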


\begin{proof}[Proof of Theorem~\ref{thm:global}]
By Lemma~\ref{prop:sufficient-condition-of-assumption}, an optimal policy for the $N$-system is in $\pidelta$, so that $\VNopt(\xini, 1) = V^N_{\pi^{N,*}}(\xini, 1)$ for a policy $\pi^{N,*} \in \pidelta$. We then have 
\begin{align*}
& \ \left|V^N_{\pi^{N,*}}(\xini, 1) - {V}^N_{{\tilde{\pi}^{N,*}}}(\xini, 1)\right|\\
\leq & \ \left|V^N_{\pi^{N,*}}(\xini, 1)-\tilde{V}^N_{{\tilde{\pi}^{N,*}}}(\xini, 1)\right| + \left|\tilde{V}^N_{\tilde{\pi}^{N,*}}(\xini, 1)-{V}^N_{{\tilde{\pi}^{N,*}}}(\xini, 1)\right| = \tON,
\end{align*}    
where the last equality holds due to Lemmas~\ref{lem:vd} and \ref{lem:ed}.
\end{proof}

\subsection{Proof of Lemma~\ref{prop:sufficient-condition-of-assumption} (Proximity of optimal policy)}

\paragraph{LP that starts at time $h$.}
Before we begin the proof, we first introduce the following LP that is used throughout the proof.
For any $h$ with $1 \le h \le H$ and $\bx_h \in \calX_{z_h \Ndelta}$, consider the LP that starts with initial condition $\bx_h$ and time spanning from $h$ to $H$:
\begin{alignat}{2}
    & \quad \QLP(\bx_h,\by_h, h) :=  \br_{h} \by_{h}^\top+ \underset{ \by_{h'} }{\mathrm{max}} \quad   \sum_{h'=h+1}^{H}  \br_{h'} \by_{h'}^\top \label{eq:problem-formulation-LP-stage-h-Q} \\
     \text{s.t.} & \quad \sum_{s} y_{h'} (s,1) = \alpha,\quad h\le h'\le H,\nonumber\\
     &\quad \by_{h'}(\cdot, 0) + \by_{h'}(\cdot,1) = \bx_{h'}(\cdot) \quad  \by_{h'} \ge {\bf 0},\quad h\le h'\le H, \nonumber  \\
    & \quad \bx_{h'+1}(\cdot)  = \sum_{s,a} y_{h'}(s,a) \mathbf{P}_{h'} \left( \cdot \mid s, a \right),
    \quad h\le h'\le H-1,\nonumber
\end{alignat}

\begin{alignat}{2}
    & \quad \VLP(\bx_h,h) :=  \max_{\by_h} \QLP(\bx_h, \by_h, h) \label{eq:problem-formulation-LP-stage-h-V} \\
     \text{s.t.} & \quad \sum_{s} y_{h} (s,1) = \alpha,\\
     &\quad \by_{h}(\cdot, 0) + \by_{h}(\cdot,1) = \bx_{h}(\cdot),\quad \by_{h} \ge {\bf 0}. \nonumber  
\end{alignat}

Under Assumption~\ref{ass:opt-lp-distance}, let $\by^*$ be the unique optimal solution to $\VLP(\xini,1)$ at $h=1$. Then by the principle of optimality, for any $h$ with $1 \le h \le H$ and initial state $\bx_h^*$ given by $\bx_h^* = \by_h^*(\cdot, 0) + \by_h^*(\cdot,1)$, the LP in \eqref{eq:problem-formulation-LP-stage-h-V} has a unique optimal solution.

We prove Lemma~\ref{prop:sufficient-condition-of-assumption} by contradiction. 
Suppose that: 
\begin{center}
  For some $h$ with $1 \le h \le H$ and some $\bx_{h} \in \simpXN$ such that $\|\bx_{h} - \bx^*_{h}\|_\infty\leq z_{h} \Ndelta$, we have $\|\piNopt(\bx_{h},h) - \by_{h}^*\|_\infty > \kappa z_{h}\Ndelta$. 
\end{center}
To ease the notation in what follows let us denote by $\by_h := \piNopt(\bx_h,h)$. 

\paragraph{Intuition of the proof plan.}
We first explain the intuition before presenting the rigorous proof.  
\begin{itemize}
    \item Suppose that $\by_h$ and $\by^*_h$ are both feasible actions for $\bx_h^*.$ Note that given $\by_h$ is $\Omega\left(\frac{\log N}{\sqrt{N}}\right)$ away from $\by^*_h$ and $\by^*_h$ is the optimal action at step $h$ for the LP \eqref{eq:problem-formulation-LP-stage-h-V} with $\bx_h^*$ as the initial condition, the Q-value \eqref{eq:problem-formulation-LP-stage-h-Q} of the LP with $\by_h$ has to be $\Omega\left(\frac{\log N}{\sqrt{N}}\right)$ smaller than that under $\by_h^*,$ or 
    \begin{equation}
     \QLP(\bx^*_h, \by_h^*,h)-\QLP(\bx^*_h, \by_h, h)= \Omega\left(\frac{\log N}{\sqrt{N}}\right).   \label{eq:QLP-1}
    \end{equation}
    \
    \item Next suppose that $\by_h$ and $\by^*_h$ are both feasible actions for $\bx_h.$ Since $\bx_h$ is at most $z_h\Ndelta$ away from $\bx_h^*,$ their value functions would differ by ${\cal O}(z_h\Ndelta) ={\cal O}\left(\frac{\log N}{\sqrt{N}}\right).$ By choosing a proper $\kappa,$ we can make sure that the Q-function differences, when we change the initial condition from $\bx_h^*$ to $\bx_h,$ do not offset the Q-function difference of taking different actions with initial condition $\bx^*_h$ (given in \eqref{eq:QLP-1}). Therefore, we can have
    \begin{equation}
    \QLP(\bx_h, \by_h^*,h)-\QLP(\bx_h, \by_h, h)= \Omega\left(\frac{\log N}{\sqrt{N}}\right).    \label{eq:QLP-2}
    \end{equation}
    
    \item If we could show that the Q-function of the LP differs ${\cal O}\left(\frac{1}{\sqrt{N}}\right)$ from the $N$-system, then \eqref{eq:QLP-2} implies 
    \begin{equation}
      Q^N(\bx_h,\by^*_h,h)-Q^N(\bx_h,\by_h,h)= \Omega\left(\frac{\log N}{\sqrt{N}}\right).
    \end{equation} so $\by_h$ cannot be an optimal action, which leads to the contradiction. 

    \item We note that in general, $\by_h$ is not a feasible action for $\bx^*_h$ and $\by_h^*$ is a not a feasible action for $\bx_h.$ Therefore, to rigorously prove the statements above, we need to construct $\by_h'$ that has a minimum difference from $\by_h$ but is feasible for $\bx^*_h,$ and $\by_h^{*,'}$ that has a minimum difference from $\by_h^*$ but is feasible for $\bx_h.$ To achieve this, we first lift the state space to $S+1$ dimension and consider $\bx^+$ whose first $S$ components are $\min\{\bx_h, \bx_h^*\}$ (component-wise) and construct $\by^+_h$ and $\by^{*,+}_h$ for the lifted system, and then project the lifted actions back to the original system. During the lifting and projection, each action component will be changed by at most $z_h\Ndelta,$ and the change of the Q-function can be bounded by a similar amount as well in our construction. Then inequalities \eqref{eq:QLP-1} and \eqref{eq:QLP-2} would hold when we replace the infeasible actions with the feasible ones. Therefore, we obtain an action that is better than $\by_h$ with initial condition $\bx_h$ in the $N$-system, which would complete the proof. 
\end{itemize}

\paragraph{Modified LP with lifted state and action.}
We introduce the following modified LP, where its state $\bx^+$ has an additional $(S+1)$-th coordinate and belongs to the simplex $\Delta^{S+1}$. For $1 \le s \le S$, $1 \le h \le H$ and $a = 0, 1$:
\begin{itemize}
  \item $r^+_h(s,a) = r_h(s,a), \; r^+_h(S+1, a) = \rmax$,
  \item $\pp^+_h(s' \mid s, a) = \pp_h(s' \mid s,a)$,
  \item $\mathbf{P}^+_{h} \left( S+1 \mid S+1, a \right)=1$, $\mathbf{P}^+_{h} \left( S+1 \mid s, a \right)=0$,
\end{itemize} 
i.e., the newly added state has the maximum reward independent of the action and the state is an isolated state. We define $\QLP^+$ and $\VLP^+$ similar to \eqref{eq:problem-formulation-LP-stage-h-Q} and \eqref{eq:problem-formulation-LP-stage-h-V} for this modified LP.

\paragraph{State and action mappings.} 
We further have the following mappings of the state and action vector.
\begin{itemize}
\item  Given $\bx_h, \bx^*_h \in \Delta^S$, we construct $\bx^+_h \in \Delta^{S+1}$ such that $x^+_h(s) := \min\{x_h(s), x^*_h(s)\}$ for $1\leq s\leq S$ and $x^+_h(S+1) := 1 - \sum_{s=1}^S x^+_h(s)$.
\item Given $\by_h$ for $\bx_h$, we construct $\by^+_h \in \Delta^{2(S+1)}$ such that 
       \begin{equation*}
         y^+_h(s,a) := \frac{y_h(s,a)}{\sum_a y_h(s,a)} x^+_h(s)
       \end{equation*}
       for $1\leq s\leq S$ and $y^+_h(S+1,a) := \sum_{s=1}^S \left(y_h(s,a) - y^+_h(s,a)\right)$. We use the convention that $0/0 = 0$. We verify that effectively $\by^+_h$ belongs to $\Delta^{2(S+1)}$, since from our construction $0 \le y^+_h(s,a) \le y_h(s,a)$ for $1 \le s \le S$, so $y^+_h(S+1,a) \ge 0$. Furthermore, we note that
       \begin{equation*}
         \sum_{s=1}^{S+1} y^+_h(s,1) = \sum_{s=1}^{S} y_h(s,1) = \alpha.
       \end{equation*}
       
\item Similarly, given $\by^*_h$ for $\bx^*_h$, we construct $\by_h^{*,+} \in \Delta^{2(S+1)}$ such that 
     \begin{equation*}
       y^{*,+}_h(s,a) := \frac{y^*_h(s,a)}{\sum_a y^*_h(s,a)} x^{+}_h(s)
     \end{equation*}
    for $1\leq s\leq S$ and $y^{*,+}_h(S+1,a) := \sum_{s=1}^S \left(y^*_h(s,a) - y^{*,+}_h(s,a)\right)$.
    
\item Given $\by^+_h \in \Delta^{2(S+1)}$ and $\bx^*_h \in \Delta^S$, we construct $\by'_h \in \Delta^{2S}$ recursively over $s$ as follows: Define $\theta := \sum_{s=1}^S \left(y_h(s,1) - y^+_h(s,1)\right)$. Starting from $s=1$ to $s=S$, repeat the following steps:
\begin{itemize}
    \item Step 1: Set 
    \begin{align*}
    y'_h(s,1) &:= y^+_h(s,1) + \min\left\{x^*_h(s) - x^+_h(s), \; \theta\right\}; \\
    y'_h(s,0) &:= y^+_h(s,0) + (x^*_h(s) - x^+_h(s)) - \min\left\{x^*_h(s) - x^+_h(s), \theta\right\}\\
    &= x_h^*(s)-y'_h(s,1).
    \end{align*}
    \item Step 2: Set $s \leftarrow s+1$ and $\theta\leftarrow \theta - \min \left\{x^*_h(s)-x^+_h(s), \theta\right\},$ and repeat step 1.      
\end{itemize}
    We verify that by our construction,
    \begin{equation*}
      \sum_{s=1}^{S} y'_h(s,1) = \sum_{s=1}^{S} y^+_h(s,1) + \sum_{s=1}^S \left(y_h(s,1) - y^+_h(s,1)\right) = \alpha.
    \end{equation*}

\item Similarly, given $\by^{*,+}_h \in \Delta^{2(S+1)}$ and $\bx_h \in \Delta^S$, we construct $\by^{*,'}_h \in \Delta^{2S}$ recursively over $s$ as follows: Define $\theta := \sum_{s=1}^S y_h^*(s,1) - y^+_h(s,1)$. Starting from $s=1$ to $s=S$, repeat the following steps:
\begin{itemize}
    \item Step 1: Set 
    \begin{align*}
    y^{*,'}_h(s,1) &:=  y_h^{*,+}(s,1) + \min\left\{x_h(s) - x^+_h(s), \theta\right\}; \\
    y^{*,'}_h(s,0) &:=  y_h^{*,+}(s,0) + (x_h(s) - x^+_h(s)) - \min\left\{x_h(s) - x^+_h(s), \theta\right\} \\
    &= x_h(s) - y'_h(s,1).
    \end{align*}
    \item Step 2: Set $s \leftarrow s+1$ and $\theta \leftarrow \theta - \min\left\{x_h(s) - x^+_h(s), \theta\right\}$, and repeat step 1.  
\end{itemize}
\end{itemize}

Note that we can view $\by'_h$ as an action mapping from $\by_h$ for state $\bx_h$ to an action that is feasible for $\bx_h^*$, and likewise $\by^{*,'}_h$ is an action mapping from $\by^*_h$ for state $\bx^*_h$ to an action that is feasible for $\bx_h$.

We note that the changes we made from $\by_h$ to $\by^+_h$ is at most $\|\bx_h - \bx^*_h\|_\infty$ component-wise for the first $S$ components, and the same holds from $\by^+_h$ to $\by'_h$. Indeed,
\begin{equation*}
  \abs{y^+_h(s,a) - y_h(s,a)} = \frac{y_h(s,a)}{x_h(s)} \abs{x^+_h(s) - x_h(s)} \le \abs{x^+_h(s) - x_h(s)} \le \abs{x^*_h(s) - x_h(s)},
\end{equation*}
and
\begin{equation*}
  \abs{y^+_h(s,a) - y'_h(s,a)} \le \abs{x^*_h(s) - x^+_h(s)} \le \abs{x^*_h(s) - x_h(s)}.
\end{equation*}
Consequently, 
\begin{equation*}
  \norminf{\by_h - \by'_h} \le 2 \norminf{\bx_h - \bx^*_h}. 
\end{equation*}

Since $\|\by_h-\by^*_h\|_\infty>\kappa z_h\Ndelta$ and $\|\bx_h - \bx^*_h\|_\infty\leq z_h\Ndelta$, we deduce that
\begin{equation*}
  \|\by'_h - \by^*_h\|_\infty>(\kappa-2) z_h\Ndelta.
\end{equation*}

\paragraph{Properties of the mappings.}
We establish some properties of the mappings in the following claim, whose proof is given in Section~\ref{sec:proof-claim:properties-mappings}.

\begin{claim}\label{claim:properties-mappings}
The modified LP and the state and action mappings satisfy the following inequalities.
\begin{enumerate}[label=(\roman*)]
    \item 
    \begin{align}
    & \QLP(\bx_h, \by_h, h) \leq \QLP^+(\bx^+_h, \by^+_h, h), \label{eq:upper-yh}\\
    & \QLP(\bx^*_h, \by^*_h,h) \leq \QLP^+(\bx^+_h, \by^{*,+}_h,h).\label{eq:upper-yhstar}
    \end{align}
    \item 
    \begin{align}
     & \QLP(\bx_h, \by_h, h) \geq \QLP^+(\bx^+_h, \by^+_h, h)-r_{\max}HSz_h\Ndelta, \label{eq:lower-yh}\\
     & \QLP(\bx^*_h, \by^*_h,h)\geq \QLP^+(\bx^+_h, \by^{*,+}_h,h)-r_{\max}HSz_h\Ndelta.\label{eq:lower-yhstar}
    \end{align}
    \item 
    \begin{align}
     & \QLP(\bx_h, \by^{*,'}_h, h) \geq \QLP^+(\bx^+_h, \by^{*,+}_h, h)-r_{\max}HSz_h\Ndelta,\label{eq:lower-yhstarprime} \\
     & \QLP(\bx^*_h, \by'_h,h)\geq \QLP^+(\bx^+_h, \by^{+}_h,h)-r_{\max}HSz_h\Ndelta.\label{eq:lower-yhprime}
    \end{align}
\end{enumerate}
\end{claim}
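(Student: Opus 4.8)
The plan is to obtain all six inequalities from two reusable transfer arguments between the original LP and the lifted LP, exploiting the three defining features of the added state $S+1$: it is absorbing, it receives no inflow from $\calS$, and it earns the maximal reward $\rmax$ irrespective of the action taken. The first argument yields the upper bounds in~(i); the second yields all four lower bounds in~(ii) and~(iii), each losing the same slack, namely the total reward accumulated in state $S+1$, which I will bound by $\rmax H S z_h\Ndelta$.

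\noindent\textbf{Upper bounds (i).} To prove $\QLP(\bx_h,\by_h,h)\le\QLP^+(\bx^+_h,\by^+_h,h)$ (and its starred version identically), I would fix an optimal continuation $\{\by_{h'}\}_{h'>h}$ of the original LP and lift it to a \emph{feasible} continuation of the modified LP by retaining the original action proportions $y_{h'}(s,a)/x_{h'}(s)$ but applying them to the smaller lifted mass $x^+_{h'}(s)$, routing each per-state deficit into state $S+1$. A short induction shows $x^+_{h'}(s)\le x_{h'}(s)$ on $\calS$ for all $h'$, and the budget is always met by setting $y^+_{h'}(S+1,1)=\alpha-\sum_{s\le S}y^+_{h'}(s,1)\ge 0$, whose nonnegativity follows from $\sum_{s\le S}y^+_{h'}(s,0)\le 1-\alpha$. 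The decisive point is a \emph{per-step} reward comparison: the reward lost on $\calS$ by downscaling equals $\sum_{s\le S,a}r_{h'}(s,a)y_{h'}(s,a)\bigl(1-x^+_{h'}(s)/x_{h'}(s)\bigr)\le \rmax\,x^+_{h'}(S+1)$, which is exactly compensated by the $\rmax\,x^+_{h'}(S+1)$ earned in state $S+1$. Hence the lifted trajectory dominates the original at every step; summing over $h'$ and maximizing gives~(i).

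\noindent\textbf{Lower bounds (ii) and (iii).} All four lower bounds fit a single template: the original-side pair dominates the lifted first-$\calS$ data, i.e.\ the state has weakly more $\calS$-mass than $\bx^+_h$ and the action dominates the lifted action on $\calS$ (for~(ii) because $\by_h\ge\by^+_h$ and $\by^*_h\ge\by^{*,+}_h$ by downscaling; for~(iii) because the projections satisfy $\by'_h\ge\by^+_h$ and $\by^{*,'}_h\ge\by^{*,+}_h$, as they only add back recovered mass). I would start from an optimal continuation of the lifted LP and split its value into the reward earned on $\calS$ and the reward earned in $S+1$. Since no mass enters $S+1$, the latter equals $\rmax\,x^+_h(S+1)(H-h+1)\le\rmax H S z_h\Ndelta$, using $x^+_h(S+1)=\sum_{s\le S}\max\{0,x_h(s)-x^*_h(s)\}\le S z_h\Ndelta$; this is precisely the slack in the claim. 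It then suffices to show the $\calS$-part is dominated by the relevant original Q-value, which I would do by constructing, inductively in $h'$, an original action with $y_{h'}(s,a)\ge y^+_{h'}(s,a)$ that preserves $x_{h'}(s)\ge x^+_{h'}(s)$ and meets the budget exactly, allocating the surplus $\calS$-mass (of total size $x^+_h(S+1)$, which matches the budget freed up by $S+1$) to the pull action as needed; nonnegativity of the rewards then yields the domination.

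\noindent\textbf{Main obstacle.} The clean step is the per-step cancellation in~(i). The technical burden lies in the lower bounds: at every step $h'$ one must verify simultaneously that the constructed original action is feasible (nonnegative, with budget exactly $\alpha$) and pointwise at least the lifted action on $\calS$, while checking that the surplus $\calS$-mass always suffices to cover the pulling budget that state $S+1$ would otherwise absorb. I expect this joint feasibility-and-domination induction, together with tracking that each bound degrades by no more than $\rmax\,x^+_h(S+1)(H-h+1)\le\rmax H S z_h\Ndelta$, to be the main difficulty.
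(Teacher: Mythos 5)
Your proposal is correct, but it reaches the six inequalities by a genuinely different mechanism than the paper. The paper's proof does not construct dominating trajectories; instead it introduces a generalized LP with a time-varying budget sequence $(\alpha_h,\dots,\alpha_H)$ and establishes two algebraic identities: $\QLP(\bx_h,\by_h,h)$ equals the supremum over budget splits of the sum of two parallel sub-LPs, one driven by the ``common'' mass $(\bx^+_h)_{[S]}$ and one by the nonnegative ``excess'' mass $\bx_h-(\bx^+_h)_{[S]}$; and $\QLP^+(\bx^+_h,\by^+_h,h)$ equals $\rmax\,x^+_h(S+1)(H-h+1)$ plus the supremum over budgets of the common-mass sub-LP. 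All six inequalities then follow by bounding the excess sub-LP between $0$ (nonnegativity of rewards, giving the lower bounds) and $\rmax\,x^+_h(S+1)(H-h+1)$ (giving the upper bounds). Your route replaces this decomposition identity with explicit feasible-policy liftings in both directions: for (i) a proportional downscaling of the optimal original continuation with the per-step cancellation $\rmax\,x^+_h(S+1)$ against the $S+1$ reward, and for (ii)--(iii) a dominating original continuation built by distributing the conserved surplus mass $x^+_h(S+1)$ across states to absorb the budget $y^+_{h'}(S+1,1)\le x^+_h(S+1)$ that the lifted system spends in state $S+1$. Your feasibility checks (nonnegativity of $y^+_{h'}(S+1,1)$ from $\sum_{s\le S}y^+_{h'}(s,1)\le\alpha$, and $y^+_{h'}(S+1,1)\le x^+_{h'}(S+1)$ from $\sum_{s\le S}y^+_{h'}(s,0)\le 1-\alpha$) are the right ones, and your slack $\rmax\,x^+_h(S+1)(H-h+1)\le\rmax HSz_h\Ndelta$ matches the paper's. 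What the comparison buys: the paper's identities compress the step-by-step bookkeeping into a single sup-over-budget-splits formula, but that formula is itself asserted with little justification and implicitly relies on the same mass-conservation and budget-transfer facts you verify by hand; your construction is more elementary and self-contained at the cost of the joint feasibility-and-domination induction you correctly flag as the main technical burden. One minor caution: mass is never ``routed into'' state $S+1$ (it is isolated with no inflow), so in your write-up of (i) you should phrase the deficit as budget assigned to the static mass already sitting in $S+1$, not as mass transported there.
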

 
\paragraph{Improved LP solution.}
The goal of this part is to show $\QLP(\bx_h, \by^{*,'}_h, h) - \QLP(\bx_h, \by_h, h) =\Theta \left( \frac{\log N}{\sqrt{N}} \right)$, i.e., the action $\by^{*,'}_h$ improves the LP solution from $\by_h$.

Using the established properties in Claim~\ref{claim:properties-mappings}, we have
\begin{align*}
   &\mspace{23mu}\QLP(\bx_h, \by^{*,'}_h, h) - \QLP(\bx_h, \by_h, h) \\
   &\ge \QLP^+(\bx^+_h, \by^{*,+}_h, h)-r_{\max}HSz_h\Ndelta-\QLP^+(\bx^+_h, \by^+_h, h)\\
   &\ge \QLP(\bx^*_h, \by^*_h,h)-r_{\max}HSz_h\Ndelta-\left(\QLP(\bx^*_h, \by'_h,h)+r_{\max}HSz_h\Ndelta\right)\\
   &\geq \QLP(\bx^*_h, \by_h^*, h)  - \QLP(\bx^*_h, \by'_h, h) - 2r_{\max} H S z_h\Ndelta.
\end{align*}

To further lower bound $\QLP(\bx^*_h, \by_h^*, h)  - \QLP(\bx^*_h, \by'_h, h)$, we utilize the following claim, which establishes an ``anti-Lipschitz'' property of the function $\QLP(\bx^*_h, \cdot, h)$.
The proof of this claim is given in Section~\ref{sec:anti-lip}.

\begin{claim}\label{claim:anti-Lipschitz}
For any $h$ with $1\le h\le H$, there exists a positive constant $\sigma > 0$ that depends on $\bx^*_h$ but not on $N$, such that for any $\overline{\by}_h \in \calY_{\bx^*_h}$, it holds that
\begin{equation}  \label{eq:anti-lipschitz}
  \QLP(\bx_h^*,\by^*_h,h)-\QLP(\bx_h^*,\overline{\by}_h,h)\geq \sigma \|\by^*_h-\overline{\by}_h\|_{\infty}.
\end{equation}
\end{claim}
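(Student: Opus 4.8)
The plan is to treat $g(\by_h) := \QLP(\bx^*_h, \by_h, h)$ as a function of the first-stage action $\by_h$ over the feasibility polytope $\calY_{\bx^*_h}$, and to exploit two structural facts: that $g$ is concave and piecewise linear, and that $\by^*_h$ is its \emph{unique} maximizer. The uniqueness is inherited from Assumption~\ref{ass:opt-lp-distance} through the principle of optimality, which (as already noted below \eqref{eq:problem-formulation-LP-stage-h-V}) guarantees that the LP \eqref{eq:problem-formulation-LP-stage-h-V} started from $\bx^*_h$ at time $h$ has a unique optimal solution, whose time-$h$ component is $\by^*_h$. To see concavity and piecewise linearity, I would write $g(\by_h) = \br_h \by_h^\top + W_{h+1}\big(\sum_{s,a} y_h(s,a)\,\pp_h(\cdot\mid s,a)\big)$, where $W_{h+1}(\cdot) := \VLP(\cdot, h+1)$ is the optimal value of the tail LP as a function of its initial state. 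Because the initial state enters only as the right-hand side of the constraint $\by_{h+1}(\cdot,0)+\by_{h+1}(\cdot,1)=\bx_{h+1}$, the map $W_{h+1}$ is concave and piecewise linear with finitely many linearity regions (a standard property of parametric LP value functions), and $\bx_{h+1}$ is linear in $\by_h$; hence $g$ is concave and piecewise linear in $\by_h$.

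Next I would reduce the global inequality \eqref{eq:anti-lipschitz} to a statement about directional derivatives at $\by^*_h$. Writing an arbitrary $\overline{\by}_h \in \calY_{\bx^*_h}$ as $\overline{\by}_h = \by^*_h + t\bv$ with $\|\bv\|_\infty = 1$ and $t = \norminf{\overline{\by}_h - \by^*_h}$, concavity of $g$ implies that the difference quotient $\big(g(\by^*_h) - g(\by^*_h + t\bv)\big)/t$ is nondecreasing in $t$. Consequently the ratio appearing in \eqref{eq:anti-lipschitz} is minimized in the limit $t\to 0^+$, so that
\[
\inf_{\overline{\by}_h \neq \by^*_h} \frac{\QLP(\bx^*_h,\by^*_h,h) - \QLP(\bx^*_h,\overline{\by}_h,h)}{\norminf{\by^*_h - \overline{\by}_h}} = \inf_{\bv}\big(-g'(\by^*_h;\bv)\big),
\]
where the right-hand infimum is over unit vectors $\bv$ in the tangent cone of $\calY_{\bx^*_h}$ at $\by^*_h$. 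It therefore suffices to show this infimum equals a positive constant $\sigma$; since none of the data defining $g$ or $\calY_{\bx^*_h}$ depends on $N$, this $\sigma$ is automatically independent of $N$.

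The crux --- and the step I expect to be the main obstacle --- is showing that $-g'(\by^*_h;\bv) > 0$ for every nonzero feasible direction and that the infimum over directions stays strictly positive. Since $g$ is concave and piecewise linear, near $\by^*_h$ it can be written as $g(\by) = \min_{i\in I}\big(\langle \bc_i,\by\rangle + d_i\big)$ over a finite index set $I$, whence $g'(\by^*_h;\bv) = \min_{i\in I^*}\langle \bc_i,\bv\rangle$, where $I^*$ indexes the pieces active at $\by^*_h$; this makes $\bv\mapsto g'(\by^*_h;\bv)$ continuous and positively homogeneous. Feasibility of $\bv$ together with optimality of $\by^*_h$ forces $g'(\by^*_h;\bv)\le 0$. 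If equality held for some feasible $\bv$, then on the first linear piece we would have $g(\by^*_h + t\bv) = g(\by^*_h) + t\,g'(\by^*_h;\bv) = g(\by^*_h)$ for all small $t>0$, producing a whole segment of maximizers and contradicting uniqueness; hence the inequality is strict. Finally, the set of unit vectors in the polyhedral tangent cone is compact and $\bv\mapsto -g'(\by^*_h;\bv)$ is continuous and strictly positive on it, so its minimum is attained at some $\sigma > 0$, establishing \eqref{eq:anti-lipschitz}. The delicate points to get right are the rigorous finiteness of the linearity regions of the tail value function (so that the directional derivative has the min-of-linear form) and ensuring the flat-segment argument invokes global uniqueness rather than mere local optimality; taking $\sigma := \min_{1\le h\le H}\sigma_h$ then yields a single $N$-independent constant usable at every step.
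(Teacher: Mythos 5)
Your proposal is correct, but it takes a genuinely different route from the paper. The paper's proof is global and combinatorial: it writes the full-horizon LP in standard form $\max\{\br\by^\top : \bA\by=\bb,\ \by\ge\bzero\}$, invokes the Fundamental Theorem of LP to conclude that the unique optimizer $\by^*$ is a vertex, defines $\sigma$ as the infimum of $\br(\by^*-\by)^\top/\norminf{\by^*-\by}$ over the feasible polytope, and shows via the Minkowski--Weyl vertex representation that this infimum is attained at one of finitely many vertices, hence is strictly positive; the stage-$h$ claim then follows by embedding any $\overline{\by}_h\in\calY_{\bx^*_h}$ into a full-horizon feasible point $\hat{\by}$ (copy $\by^*$ before time $h$, play $\overline{\by}_h$ at time $h$, continue optimally) and using $\norminf{\by^*-\hat{\by}}\ge\norminf{\by^*_h-\overline{\by}_h}$. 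Your proof is local and analytic: you work directly with $g(\by_h)=\QLP(\bx^*_h,\by_h,h)$ as a concave piecewise-linear function (concavity of the parametric value $\VLP(\cdot,h+1)$ in the right-hand side composed with the linear map $\phi_h$), reduce the global ratio bound to one-sided directional derivatives via monotonicity of concave difference quotients, rule out a zero directional derivative by the flat-segment/uniqueness argument, and extract a uniform $\sigma_h>0$ by compactness of the unit directions in the polyhedral tangent cone. Both arguments ultimately rest on the same two pillars, uniqueness of the optimizer plus a finiteness property (finitely many vertices for the paper, finitely many active linear pieces for you), and both load-bearing auxiliary facts you flag (finiteness of the linearity regions of a parametric LP value function; uniqueness of the tail LP solution at $(\bx^*_h,h)$ via the principle of optimality, which the paper asserts just before its proof) are standard and correct. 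What the paper's route buys is an explicit, in-principle computable formula for a single $\sigma$ valid at every stage (which matters since $\sigma$ enters the definition of $\kappa$), at the cost of the slightly ad hoc embedding $\hat{\by}$; your route buys a cleaner conceptual picture of why degeneracy of the stage-$h$ Q-function is impossible under uniqueness, at the cost of heavier (though standard) parametric-LP and convex-analysis machinery and a per-stage constant that must be minimized over $h$.
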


Applying Claim~\ref{claim:anti-Lipschitz} to $\overline{\by}_h=\by'_h \in \calY_{\bx^*_h}$, we have
\begin{equation*}
  \QLP(\bx_h^*,\by^*_h,h)-\QLP(\bx_h^*,\by'_h,h) \geq \sigma \|\by^*_h-\by'_h\|_{\infty}\geq \sigma(\kappa-2) z_h \Ndelta .
\end{equation*} 
Therefore,
\begin{align}
   &\mspace{23mu}\QLP(\bx_h, \by^{*,'}_h, h) - \QLP(\bx_h, \by_h, h)\nonumber \\
   &\geq \QLP(\bx^*_h, \by_h^*, h)  - \QLP(\bx^*_h, \by'_h, h) - 2r_{\max} H S z_h\Ndelta\nonumber\\
   &\ge \sigma(\kappa-2) z_h \Ndelta- 2r_{\max} H S z_h\Ndelta\nonumber\\
    &=  \frac{2(\sigma \kappa-2\sigma -2r_{\max}HS) z_h \log N}{\sqrt{N}} = \Theta \left( \frac{\log N}{\sqrt{N}} \right),\label{eq:lower-LP}
\end{align} 
as long as 
\begin{equation*}  \label{eq:condition-on-kappa}
   \kappa \ge \frac{2r_{\max}HS}{\sigma} + 3.
\end{equation*}

\paragraph{Improved policy for $N$-system and contradiction.}
We now prove that for the $N$-system with state $\bx_h$ at time $h$, the action $\round(\by^{*,'}_h)$ strictly improves over the action $\by_h$.
However, recall that the action $\by_h := \piNopt(\bx_h,h)$ is given by the optimal policy for the $N$-system, which thus leads to a contradiction.

To prove the improvement of $\round(\by^{*,'}_h)$ over $\by_h$, we will lower bound $Q^{N}(\bx_h, \round(\by^{*,'}_h), h)-Q^{N}(\bx_h, \by_h, h)$ by relating the Q-function of the $N$-system to the Q-function of the LP.
In particular, we establish the following claim, whose proof is given in Section~\ref{sec:proof-claim:Q-relation}.
\begin{claim}\label{claim:Q-relation}
    The Q-function of the $N$-system and the Q-function of the LP satisfy that
    \begin{align}
    \QLP(\bx_h, \by^{*,'}_h, h) - Q^{N}(\bx_h, \round(\by^{*,'}_h), h) &\le \frac{C_h}{\sqrt{N}},\label{eq:fluid-and-N-sys-gap}
    \end{align}
    for some positive constant $C_h>0$ independent of $N$. 
    In addition, for any $\overline{\by}_h\in \calY_{\bx_h} \cap \simpYN$,
    \begin{equation}\label{eq:QLP-larger-QN}
        \QLP(\bx_h,\overline{\by}_h,h) \ge Q^N(\bx_h,\overline{\by}_h,h).
    \end{equation}
\end{claim}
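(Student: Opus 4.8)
The plan is to prove both inequalities together by backward induction on $h$, viewing \eqref{eq:QLP-larger-QN} as the qualitative statement that the fluid LP upper-bounds the $N$-system and \eqref{eq:fluid-and-N-sys-gap} as its quantitative $\calO(1/\sqrt N)$ refinement. The common engine is the one-step recursion
$$Q^N(\bx_h,\by_h,h)=\br_h\by_h^\top+\Ex{\VNopt(\bX_{h+1},h+1)},\qquad \QLP(\bx_h,\by_h,h)=\br_h\by_h^\top+\VLP(\bx_{h+1},h+1),$$
where $\bx_{h+1}=\sum_{s,a}y_h(s,a)\pp_h(\cdot\mid s,a)=\Ex{\bX_{h+1}}$ is exactly the conditional mean of the random next state of the $N$-system under $\by_h$. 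Two structural facts about $\VLP(\cdot,h+1)$ do the work: it is \emph{concave} in its state argument, being the optimal value of a maximization LP in which the state enters only through the right-hand side of the constraint $\by_{h+1}(\cdot,0)+\by_{h+1}(\cdot,1)=\bx_{h+1}$; and it is \emph{Lipschitz} on $\Delta^S$ with a constant $L$ independent of $N$ (bounded, via LP sensitivity/the bounded dual variables, in terms of $\rmax$, $H$ and $S$). I would establish these two facts first.

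For \eqref{eq:QLP-larger-QN} I induct backward from $h=H$, where both Q-functions equal $\br_H\overline{\by}_H^\top$. Assuming $\VNopt(\cdot,h+1)\le\VLP(\cdot,h+1)$, the recursion gives
$$\Ex{\VNopt(\bX_{h+1},h+1)}\le\Ex{\VLP(\bX_{h+1},h+1)}\le\VLP\big(\Ex{\bX_{h+1}},h+1\big)=\VLP(\bx_{h+1},h+1),$$
the first step by the induction hypothesis and the second by Jensen applied to the concave $\VLP(\cdot,h+1)$. Adding $\br_h\overline{\by}_h^\top$ yields $Q^N(\bx_h,\overline{\by}_h,h)\le\QLP(\bx_h,\overline{\by}_h,h)$; maximizing over feasible actions then propagates $\VNopt(\cdot,h)\le\VLP(\cdot,h)$ and closes the induction.

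For \eqref{eq:fluid-and-N-sys-gap} I apply the same recursion to the fixed action $\by:=\by^{*,'}_h$ and its rounding, decomposing
$$\QLP(\bx_h,\by,h)-Q^N(\bx_h,\round(\by),h)=\br_h\big(\by-\round(\by)\big)^\top+\big[\VLP(\bx_{h+1},h+1)-\Ex{\VNopt(\bX_{h+1},h+1)}\big],$$
where now $\bX_{h+1}$ is the $N$-system next state under $\round(\by)$ and $\bx_{h+1}$ is the fluid state under $\by$. I would bound four contributions: (i) the reward rounding error $\br_h(\by-\round(\by))^\top=\calO(1/N)$ via \eqref{eq:rounding-error-bound}; (ii) the mean shift $\VLP(\bx_{h+1},h+1)-\VLP(\Ex{\bX_{h+1}},h+1)=\calO(1/N)$, since $\by$ and $\round(\by)$ differ by $\calO(1/N)$ and $\VLP$ is Lipschitz; (iii) the fluctuation term $\VLP(\Ex{\bX_{h+1}},h+1)-\Ex{\VLP(\bX_{h+1},h+1)}\le L\,\Ex{\norme{\bX_{h+1}-\Ex{\bX_{h+1}}}}=\calO(1/\sqrt N)$, where the rate follows because $\bX_{h+1}$ has covariance $\tfrac1N\Gamma_h$ (see \eqref{eq:covariance-matrix}--\eqref{eq:Gamma-definition}), so its coordinate-wise standard deviation is $\calO(1/\sqrt N)$ uniformly over actions; and (iv) the inductive gap $\Ex{\VLP(\bX_{h+1},h+1)-\VNopt(\bX_{h+1},h+1)}\le C_{h+1}/\sqrt N$. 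Summing and setting $C_h:=C_{h+1}+L+\calO(1)$ (with $C_{H}=\calO(1)$ from the rounding error alone) gives the claim.

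The main obstacle is contribution (iii): it is the sole source of the $1/\sqrt N$ rate, and making it rigorous requires the $N$-independent Lipschitz constant for $\VLP(\cdot,h+1)$ together with an action-uniform bound showing the per-coordinate variance of the normalized multinomial next state is $\calO(1/N)$. A secondary point is verifying that the recursively defined $C_h$ remain independent of $N$ as the induction unwinds; since $H$ is fixed this only costs a horizon factor. Finally, note that \eqref{eq:fluid-and-N-sys-gap} concerns the mapped action $\by^{*,'}_h$ rather than the LP-optimal action at $\bx_h$, but the decomposition never invoked optimality of the current action, so it applies verbatim.
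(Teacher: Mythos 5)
Your proposal follows essentially the same route as the paper: \eqref{eq:QLP-larger-QN} via concavity of $\VLP(\cdot,h+1)$ in the state argument plus Jensen's inequality and the bound $\VNopt\le\VLP$, and \eqref{eq:fluid-and-N-sys-gap} via backward induction combining the $\calO(1/N)$ rounding error, the $N$-independent Lipschitz constant of $\VLP$ from LP sensitivity analysis, and the $\calO(1/\sqrt N)$ concentration of the normalized multinomial next state around its mean. The only point to make explicit is that the induction hypothesis must be carried as the \emph{value-function} bound $\VLP(\cdot,h+1)-\VNopt(\cdot,h+1)\le C_{h+1}/\sqrt N$ (which your contribution (iv) silently invokes), and that this bound is recovered at step $h$ by instantiating your action-independent decomposition at the rounded LP-optimal action, which is a feasible action for the $N$-system --- exactly how the paper closes the loop.
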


With Claim~\ref{claim:Q-relation}, we have
\begin{align}
    &\mspace{23mu}Q^{N}(\bx_h, \round(\by^{*,'}_h), h) - Q^{N}(\bx_h, \by_h, h)\\
    &\ge \QLP(\bx_h, \by^{*,'}_h, h) - \frac{C_h}{\sqrt{N}} - \QLP(\bx_h,\by_h,h)\\
    &\ge \frac{2(\sigma \kappa-2\sigma -2r_{\max}HS) z_h \log N}{\sqrt{N}}- \frac{C_h}{\sqrt{N}}\label{eq:lower-1}\\
    &\ge \frac{C_h}{\sqrt{N}},\label{eq:lower-2}
\end{align}
where \eqref{eq:lower-1} follows from the lower bound \eqref{eq:lower-LP} proved in the last step, and \eqref{eq:lower-2} holds as long as $N$ is large enough such that
\begin{equation*}  \label{eq:condition-on-N}
  \log N \ge \frac{C_h}{(\sigma \kappa-2\sigma -2r_{\max}HS) z_h}.
\end{equation*}
This completes the proof.

\subsubsection{Proof of Claim~\ref{claim:properties-mappings}}\label{sec:proof-claim:properties-mappings}
We first derive equivalent forms of $\QLP(\bx_h, \by_h, h)$ and $\QLP^+(\bx^+_h, \by^+_h, h)$.

\paragraph{Equivalent form of $\QLP(\bx_h, \by_h, h)$.}
For convenience, we let $(\bx^+_h)_{[S]}$ denote the $S$-dimensional vector $(x^+_h(s))_{s\in\{1,2,\dots,S\}}$ and $(\by^+_h)_{[S]}$ denote the $2S$-dimensional vector $(y^+_h(s,a))_{s\in\{1,2,\dots,S\},a\in\{0,1\}}$.

At a high level, we decompose $\bx_h$ and  $\by_h$ as
\begin{align*}
    \bx_h &= (\bx_h^+)_{[S]} + (\bx_h - (\bx_h^+)_{[S]}),\\
    \by_h &= (\by_h^+)_{[S]} + (\by_h - (\by_h^+)_{[S]}),
\end{align*}
where $\bx_h - (\bx_h^+)_{[S]}$ and $\by_h - (\by_h^+)_{[S]}$ are both nonnegative vectors,
and decompose the LP correspondingly.
Consider the following generalization of the LPs considered before.
For any nonnegative vectors $\widehat{\bx}_h$ ($\widehat{\bx}_h(\cdot)$ may not sum up to $1$), 
any $(\alpha_{h},\alpha_{h+1},\dots,\alpha_{H})$ with $0\le \alpha_{h'}\le \alpha$ for all $h'$,
and any $\widehat{\by}_h$ with $\sum_a\widehat{y}_h(s,a)=\widehat{x}_h(s)$ and $\sum_s\widehat{y}_h(s,1)=\alpha_h$, we define
\begin{alignat}{2}
    & \quad \QLP(\widehat{\bx}_h,\widehat{\by}_h, h,(\alpha_h,\alpha_{h+1},\dots,\alpha_{H})) :=  \br_{h} \widehat{\by}_h^\top+ \underset{ \widehat{\by}_{h'} }{\mathrm{max}} \quad   \sum_{h'=h+1}^{H}  \br_{h'} \widehat{\by}_{h'}^\top \label{eq:problem-formulation-LP-stage-h-Q-generalized} \\
     \text{s.t.} & \quad \sum_{s} \widehat{y}_{h'} (s,1) = \alpha_{h'},\quad h\le h'\le H,\nonumber\\
     &\quad \widehat{\by}_{h'}(\cdot, 0) + \widehat{\by}_{h'}(\cdot,1) = \widehat{\bx}_{h'}(\cdot),\quad  \widehat{\by}_{h'} \ge {\bf 0},\quad h\le h'\le H, \nonumber  \\
    & \quad \widehat{\bx}_{h'+1}(\cdot)  = \sum_{s,a} \widehat{y}_{h'}(s,a) \mathbf{P}_{h'} \left( \cdot \mid s, a \right),  h\le h'\le H-1\nonumber
\end{alignat}
\begin{alignat}{2}
    & \quad \VLP(\widehat{\bx}_h,h,(\alpha_h,\alpha_{h+1},\dots,\alpha_{H})) :=  \max_{\widehat{\by}_h} \QLP(\widehat{\bx}_h, \widehat{\by}_h, h,(\alpha_h,\alpha_{h+1},\dots,\alpha_{H})) \label{eq:problem-formulation-LP-stage-h-V-generalized} \\
     \text{s.t.} & \quad \sum_{s} \widehat{y}_{h} (s,1) = \alpha_h,\nonumber\\
     &\quad \widehat{\by}_{h}(\cdot, 0) + \widehat{\by}_{h}(\cdot,1) = \widehat{\bx}_{h}(\cdot), \quad  \widehat{\by}_{h} \ge {\bf 0}. \nonumber  
\end{alignat}
Note that both the Q-value and the V-value are continuous in $(\alpha_h,\alpha_{h+1},\dots,\alpha_{H})$.
Also any feasible solution satisfies that $\sum_{s=1}^S\widehat{x}_{h'}(s)=\sum_{s=1}^S\widehat{x}_{h}(s)$ for all $h'$ with $h\le h'\le H$.

Then we can write
\begin{equation}\label{eq:equiv-LP}
\begin{split}
    &\mspace{23mu}\QLP(\bx_h, \by_h, h)\\
    &=\br_h(\by^+_h)_{[S]}^\top+\br_h(\by_h-(\by^+_h)_{[S]})^\top\\
    &\mspace{23mu}+\sup_{(\alpha_{h+1},\alpha_{h+2},\dots,\alpha_H)}\left(\VLP(\phi((\by^+_h)_{[S]}),h+1,(\alpha_{h+1},\dots,\alpha_{H}))\right.\\
    &\mspace{180mu}\left.+\VLP(\phi((\by_h-(\by^+_h)_{[S]})),h+1,(\alpha-\alpha_{h+1},\dots,\alpha-\alpha_{H}))\right).
\end{split}
\end{equation}
Let $(\alpha^*_{h+1},\alpha^*_{h+2},\dots,\alpha^*_H)$ be the one that achieves the supremum, which exists due to continuity and the fact that $(\alpha_{h+1},\alpha_{h+2},\dots,\alpha_H)$ lies in the closed set $[0,\alpha]^{H-h}$.

\paragraph{Equivalent form of $\QLP^+(\bx^+_h, \by^+_h, h)$.}
It is not hard to see that
\begin{equation}\label{eq:equiv-LP-plus}
\begin{split}
    \QLP^+(\bx^+_h, \by^+_h, h)&=x_h^+(S+1)\cdot r_{\max}\cdot(H-h+1)\\
    &\mspace{23mu}+\sup_{(\alpha_{h+1},\alpha_{h+2},\dots,\alpha_H)}\QLP((\bx^+_h)_{[S]}, (\by_h^+)_{[S]}, h, (\alpha_{h},\alpha_{h+1},\dots,\alpha_H))\\
    &=x_h^+(S+1)\cdot r_{\max}\cdot(H-h+1)+\br_h(\by_h^+)_{[S]}^\top\\
    &\mspace{23mu}+\sup_{(\alpha_{h+1},\alpha_{h+2},\dots,\alpha_H)}\VLP(\phi((\by_h^+)_{[S]}), h+1, (\alpha_{h+1},\dots,\alpha_H)),
\end{split}
\end{equation}
where $\alpha_h=\sum_{s=1}^S y_h^+(s,1)$.
Similarly, the supremum can be achieved at some vector $(\alpha^{+,*}_{h+1},\alpha^{+,*}_{h+2},\dots,\alpha^{+,*}_H)$

\paragraph{Proof of \eqref{eq:upper-yh} and \eqref{eq:upper-yhstar}.}
Recall that to prove \eqref{eq:upper-yh} is to prove $\QLP(\bx_h, \by_h, h) \leq \QLP^+(\bx^+_h, \by^+_h, h)$.
By the equivalent form of $\QLP(\bx_h, \by_h, h)$ in \eqref{eq:equiv-LP}, we know that
\begin{align*}
    &\mspace{23mu}\QLP(\bx_h, \by_h, h)\nonumber\\
    &=\br_h(\by^+_h)_{[S]}^\top+\VLP(\phi((\by^+_h)_{[S]}),h+1,(\alpha_{h+1}^*,\dots,\alpha_{H}^*))\\
    &\mspace{23mu}+\br_h(\by_h-(\by^+_h)_{[S]})^\top+\VLP(\phi(\by_h-(\by^+_h)_{[S]}),h+1,(\alpha-\alpha_{h+1}^*,\dots,\alpha-\alpha_{H}^*)).
\end{align*}
Note that 
\begin{align*}
    \br_h(\by_h-(\by^+_h)_{[S]})^\top&=\sum_{s=1}^S \sum_a r_h(s,a)(y_h(s,a)-y^+_h(s,a))\\
    &\le r_{\max}\sum_{s=1}^S \sum_a (y_h(s,a)-y^+_h(s,a))\\
    &=r_{\max}x_h^+(S+1)
\end{align*}
and
\begin{align*}
    &\mspace{23mu}\VLP(\phi(\by_h-(\by^+_h)_{[S]}),h+1,(\alpha-\alpha_{h+1}^*,\dots,\alpha-\alpha_{H}^*))\\
    &\le r_{\max}\sum_{s=1}^S\sum_a (y_h(s,a)-y^+_h(s,a))\cdot (H-h)\\
    &=r_{\max}x_h^+(S+1)\cdot(H-h).
\end{align*}
Therefore,
\begin{align*}
    &\mspace{23mu}\QLP(\bx_h, \by_h, h)\nonumber\\
    &\le\br_h(\by^+_h)_{[S]}^\top+\VLP(\phi(\by_h-(\by^+_h)_{[S]}),h+1,(\alpha_{h+1}^*,\dots,\alpha_{H}^*))+r_{\max}x_h^+(S+1)\cdot(H-h+1)\\
    &\le \QLP^+(\bx^+_h, \by^+_h, h).
\end{align*}

The inequality \eqref{eq:upper-yhstar} can be proven similarly and we omit the details here.

\paragraph{Proof of \eqref{eq:lower-yh} and \eqref{eq:lower-yhstar}.}
Recall that $(\alpha^{+,*}_{h+1},\alpha^{+,*}_{h+2},\dots,\alpha^{+,*}_H)$ achieves the supremum in the equivalent form of $\QLP^+(\bx^+_h, \by^+_h, h)$ in \eqref{eq:equiv-LP-plus}.
Then
\begin{align}
    &\mspace{23mu}\QLP(\bx_h, \by_h, h)\nonumber\\
    &\ge\br_h(\by^+_h)_{[S]}^\top+\VLP(\phi((\by^+_h)_{[S]}),h+1,(\alpha_{h+1}^{+,*},\dots,\alpha_{H}^{+,*}))\nonumber\\
    &\mspace{23mu}+\br_h(\by_h-(\by^+_h)_{[S]})^\top+\VLP(\phi(\by_h-(\by^+_h)_{[S]}),h+1,(\alpha-\alpha_{h+1}^{+,*},\dots,\alpha-\alpha_{H}^{+,*}))\nonumber\\
    &\ge \br_h(\by^+_h)_{[S]}^\top+\VLP(\phi((\by^+_h)_{[S]}),h+1,(\alpha_{h+1}^{+,*},\dots,\alpha_{H}^{+,*}))\label{eq:ignoring-nonnegative-reward}\\
    &=\QLP^+(\bx^+_h, \by^+_h, h)-x_h^+(S+1)\cdot r_{\max}\cdot(H-h+1),\nonumber
\end{align}
where \eqref{eq:ignoring-nonnegative-reward} uses the nonnegativity of rewards and of the $\by_h-(\by^+_h)_{[S]}$ vector.
Note that $x_h^+(S+1)\le \sum_{s=1}^S|x_h(s)-x_h^*(s)|\le Sz_h\delta_h$.
Therefore,
\[
\QLP(\bx_h, \by_h, h)\ge \QLP^+(\bx^+_h, \by^+_h, h)-r_{\max}HSz_h\delta_h,
\]
which completes the proof of \eqref{eq:lower-yh}.
The inequality \eqref{eq:lower-yhstar} can be proven similarly and we omit the details here.

\paragraph{Proofs of \eqref{eq:lower-yhstarprime} and \eqref{eq:lower-yhprime}.} 
Note that the vector $\by^{*,'}_h-(\by_h^{*,+})_{[S]}$ is a nonnegative vector, so we can decompose $\by^{*,'}_h$ as $\by^{*,'}_h=(\by_h^{*,+})_{[S]}+(\by^{*,'}_h-(\by_h^{*,+})_{[S]})$ and derive the equivalent forms of $\QLP(\bx_h, \by^{*,'}_h, h)$ and $\QLP^+(\bx^+_h, \by^{*,+}_h, h)$ accordingly.
Similarly, the vector $\by'_h-(\by_h^{+})_{[S]}$ is a nonnegative vector, so we can decompose $\by'_h$ as $\by'_h=(\by_h^{+})_{[S]}+(\by'_h-(\by_h^{+})_{[S]})$ and derive the equivalent forms of $\QLP(\bx_h^*, \by'_h, h)$ and $\QLP^+(\bx^+_h, \by^{+}_h, h)$ accordingly.
Therefore, the proofs of \eqref{eq:lower-yhstarprime} and \eqref{eq:lower-yhprime} are similar to those of \eqref{eq:lower-yh} and \eqref{eq:lower-yhstar}, respectively, and we omit the details here.

\subsubsection{Proof of Claim~\ref{claim:anti-Lipschitz}.}
\label{sec:anti-lip}
To ease the discussion, let us write the LP in \eqref{eq:problem-formulation-LP-stage-h-Q} and \eqref{eq:problem-formulation-LP-stage-h-V} in the following standard form:
\begin{align*}
\text{LP}:\quad 
& \max \{ \br \by^\top : \; \bA \by = \bb, \;  \by \ge \bzero \}.
\end{align*}
where
\begin{itemize}
    \item \( \bA \in \mathbb{R}^{m \times n} \) is a given matrix with $m \le n$, and $\bb \in \mathbb{R}^m$ is a given r.h.s.\ vector;
    \item \( \br \in \mathbb{R}^n \) is the reward vector;
    \item \( \by \in \mathbb{R}^n \) is the decision variable vector.
\end{itemize}

Define the \emph{feasible set} of $\text{LP}$ by
\begin{equation}  \label{eq:polyhedron-definition}
  \calU := \{  \by \in \mathbb{R}^n  \mid  \bA \by = \bb, \; \by \ge \bzero \}.
\end{equation}
By \citep[Theorem 1.1]{ziegler2012lectures}, the set $\calU$ written in the above form is a polyhedron. In our case it is further a polytope (i.e., a bounded polyhedron) since any element in $\calU$ has $\norm{\cdot}_1$-norm bounded by $H$.

We suppose that the feasible set $\calU$ is \emph{not} a singleton. Note that under some extremal and trivial cases $\calU$ can reduce to a singleton, e.g. if the initial condition has $x(s)=1$ and $x(s')=0$ for all $s' \neq s$, and the transition kernels are all identity matrices. But then Equation~\eqref{eq:anti-lipschitz} holds trivially.

From the Fundamental Theorem of Linear Programming (see, e.g. By \citep[Theorem 13.2]{nocedal1999numerical}), which states that \textit{if an optimal solution of an LP exists, at least one optimal solution is a vertex; consequently, whenever the optimum is unique it must be that vertex}, we deduce that $\by^*$, being the unique optimal solution by assumption, must be a vertex of $\calU$.

Let $V(\by) := \br \by^\top$ be the objective value for any $\by \in \calU$. Because $\by^*$ is the unique maximizer, for all $\by \in \calU$ with $\by \neq \by^*$ we have $V(\by) < V(\by^*)$. We then define the \emph{smallest (nontrivial) descent} at $\by^*$ as
\begin{equation}
  \sigma := \inf_{\substack{\by\in \calU \\ \by\neq\by^{*}}} \frac{\br \bigl(\by^{*} - \by\bigr)^\top}{\norminf{\by^{*} - \by}}.
  \label{eq:sigma-b-min-expression-I} 
\end{equation}
Note that since $\calU$ is not a singleton, $\sigma$ is well defined.  

Now the Minkowski–Weyl Theorem \citep[Theorem 1.1]{ziegler2012lectures} states that every polytope is the convex hull of a \emph{finite} set of its vertices.
Hence we write $\calU = \operatorname{conv}\{\bv_{1},\dots,\bv_{r}\}$ where $\bv_{1},\dots,\bv_{r} \in \calV$, where $\calV$ is the \emph{finite} set of vertices of $\calU$. Note that $\by^* \in \calV$ and we may write $\by^* = \bv^*$. 
Therefore, we also have
\begin{align*}  
  \sigma &= \min_{\substack{\bv \in \calV \\ \bv \neq \bv^*}} \frac{\br \bigl(\bv^*-\bv\bigr)^\top}{\norminf{\bv^*-\bv}}.
\end{align*}
To see this, for any $\by \in \calU$ and $\by \neq \by^*$, write $\by = \sum_{i=1}^{r} \lambda_i \bv_i$ with $\lambda_i \ge 0$ and $\sum_{i=1}^{r} \lambda_i = 1$, we have
\begin{align*}
 \frac{\br \bigl(\by^*-\by\bigr)^\top}{\norminf{\by^*-\by}} &=  \sum_{i=1}^{r} \frac{\lambda_i \br (\bv^* - \bv_i)^\top}{\norminf{\sum_{j=1}^{r} \lambda_j (\bv^* - \bv_j)}} \\
  &\ge \sum_{i=1}^{r} \frac{\lambda_i \br (\bv^* - \bv_i)^\top}{\sum_{i=j}^{r} \norminf{\lambda_j (\bv^* - \bv_j)}} \\
  &= \sum_{i=1}^{r} \left( \frac{\lambda_i \norminf{\bv^* - \bv_i}}{\sum_{j=1}^{r} \lambda_j \norminf{\bv^* - \bv_j}} \right) \frac{\br (\bv^*-\bv_i)^\top}{\norminf{\bv^*-\bv_i}} \\
  &\ge \min_{\substack{\bv \in \calV \\ \bv \neq \bv^*}} \frac{\br \bigl(\bv^*-\bv\bigr)^\top}{\norminf{\bv^*-\bv}} \\
  &= \sigma.
\end{align*}
The above holds for any $\by \in \calU$ and $\by \neq \by^*$, we deduce that
\begin{equation*}
 \sigma = \inf_{\substack{\by\in \calU \\ \by\neq\by^{*}}} \frac{\br \bigl(\by^*-\by\bigr)^\top}{\norminf{\by^*-\by}} \ge \min_{\substack{\bv \in \calV \\ \bv \neq \bv^*}} \frac{\br \bigl(\bv^*-\bv\bigr)^\top}{\norminf{\bv^*-\bv}} \ge \sigma,
\end{equation*}
so all the inequalities above must be equalities.

Now $\sigma>0$ because of the uniqueness assumption and the fact that there is only a finite number of vertices in $\calV$. 

To complete the proof of Claim~\ref{claim:anti-Lipschitz}, we apply the general result established above to $\VLP(\xini,1)$ in \eqref{eq:problem-formulation-LP}, with unique optimal solution $\by^*$. For any given $1 \le h \le H$ and $\by_h \in \calY_{\bx^*_h}$, we construct a feasible $\hat{\by}$ for \eqref{eq:problem-formulation-LP} as follows:
\begin{itemize}
  \item For $1 \le h' \le h-1$, set $\hat{\by}_{h'} := \by^*_{h'}$;
  \item Set $\hat{\by}_h := \by_h$, and define $\hat{\bx}_{h+1}$ such that $\hat{\bx}_{h+1}(\cdot)  = \sum_{s,a} \hat{y}_{h}(s,a) \mathbf{P}_{h} \left( \cdot \mid s, a \right)$;
  \item The concatenated vector $(\hat{\by}_{h+1}, \dots, \hat{\by}_H)$ optimally solves $\VLP(\hat{\bx}_{h+1}, h+1)$ in \eqref{eq:problem-formulation-LP-stage-h-V}.
\end{itemize}

Note that we can see $\by^*_h$ and $\by_h$ as the $(2Sh+1)$-th to $2S(h+1)$-th coordinates of the vectors $\by^*$ and $\hat{\by}$, respectively, and from the construction we deduce that
\begin{align*}
  \frac{\QLP(\bx^*_h,\by^*_h,h) - \QLP(\bx^*_h,\by_h,h)}{\norminf{\by^*_h - \by_h}} & \ =  \frac{V(\by^*) - V(\hat{\by})}{\norminf{\by^*_h - \by_h}} \\
  & \ \ge \frac{V(\by^*) - V(\hat{\by})}{\norminf{\by^* - \hat{\by}}} \\
  & \ \ge \sigma.
\end{align*}

\subsubsection{Proof of Claim~\ref{claim:Q-relation}.}\label{sec:proof-claim:Q-relation}
\paragraph{Proof of \eqref{eq:fluid-and-N-sys-gap}.}

For a step $1 \le h \le H$, define the deterministic drift function $\phi_h: \Delta^{2S} \rightarrow \Delta^{S}$ as for given $\by_h \in \Delta^{2S}$, in each coordinate $1 \le s' \le S$: 
\begin{equation}  \label{ea:function-phi}
  \phi_h(\by_h)(s') := \sum_{s,a} y_h(s,a) P_h(s' \mid s,a).
\end{equation}
We remark that $\phi_h$ is a linear function, hence it is Lipschitz-continuous. Denote by $L_h>0$ its Lipschitz constant under the $\norminf{\cdot}$-norm for vectors, which is independent of $N$. We note that a loose upper bound of $L_h$ is $2S$.

We use backward induction on $h$ below to show that there exists constants $C_h > 0$ independent of $N$, $\bx_h \in \simpXN$ and $\bY_h \in \calY_{\bx_h}$ such that 
\begin{equation} \label{eq:middle-step-I}
  \VLP(\bx_h,h) - \VNopt(\bx_h,h) \le \frac{C_h}{\sqrt{N}},
\end{equation} 
and 
\begin{equation*}
  \QLP(\bx_h, \bY_h, h) - Q^{N}(\bx_h, \round(\bY_h), h) \le \frac{C_h}{\sqrt{N}},
\end{equation*}
where the latter is a repetition of \eqref{eq:fluid-and-N-sys-gap}.

The claim holds for $h = H$ since the two value functions coincide. Now suppose that the claim holds for $h+1$.

Standard theory on sensitivity analysis of LP (see, e.g. \citep[Section 5.2]{bertsimas1997introduction}) implies that $\bx \mapsto \VLP(\bx,h)$ and $\YLP_h \mapsto \QLP(\bx,\YLP_h,h)$ are Lipschitz functions, we denote the Lipschitz constant of the former by $L_{V,h}$, since the Lipschitz constant of the later can be upper bounded by $\rmax + L_{V,h+1} L_{h}$, where $L_{h} > 0$ is the Lipschitz constant of $\phi_h(\cdot)$ via
\begin{align} \label{eq:Lipschitz-constant-relation}
     & \abs{\QLP(\bx,\YLP_h,h) - \QLP(\bx,\YLP'_h,h)} \nonumber \\
 \le & \ \rmax \norminf{\YLP_h - \YLP'_h} + \abs{\VLP(\phi_h(\YLP_h), h+1) - \VLP(\phi_h(\YLP'_h),h+1)}  \nonumber  \\
 \le & \ \left( \rmax + L_{V,h+1} L_{h} \right) \norminf{\YLP_h - \YLP'_h}.
\end{align}

We write 
\begin{align}
  \VLP(\bx,h) - \VNopt(\bx,h) = & \ \QLP(\bx,\YLP^*_h,h) - Q^N(\bx,\Yopt_h,h) \nonumber \\
  \le & \ \QLP(\bx,\YLP^*_h,h) - \QLP(\bx, \round(\YLP^*_h),h)  \nonumber \\
  & \qquad + \QLP(\bx, \round(\YLP^*_h), h) - Q^N(\bx, \round(\YLP^*_h), h) \label{eq:part-III} \\
  \le_{(a)} & \ (\rmax + L_{V,h+1} L_{h}) \frac{1}{N} \nonumber  \\
  & \qquad + \VLP(\phi_h(\YLP'_h), h+1) - \expect{\VNopt(\Xsys, h+1) \mid \YLP'_h}  \nonumber \\
  \le & \ (\rmax + L_{V,h+1} L_{h}) \frac{1}{\sqrt{N}}  \nonumber \\
   & \ + \abs{\VLP(\phi_h(\YLP'_h), h+1) - \expect{\VLP(\Xsys, h+1) \mid \YLP'_h}}  \label{eq:part-I} \\
   & \ + \abs{\expect{\VLP(\Xsys, h+1) \mid \YLP'_h} - \expect{\VNopt(\Xsys, h+1) \mid \YLP'_h}}.  \label{eq:part-II}
\end{align}
Here we abbreviate $\YLP'_h = \round(\by^{*,'}_h)$. Inequality (a) follows from the rounding error bound \eqref{eq:rounding-error-bound} together with \eqref{eq:Lipschitz-constant-relation}.

Since $\expect{\Xsys \mid \YLP'_h} = \phi_h(\YLP'_h)$ and $\bx \mapsto \VLP(\bx, h+1)$ is a Lipschitz function with constant $L_{V, h+1} > 0$, we have
\begin{equation*}
  \eqref{eq:part-I} \le L_{V,h+1} \expect{\norminf{\Xsys - \phi_h(\YLP'_h)} \mid \YLP'_h} \le_{(a)} \frac{L_{V,h+1} \sqrt{S}}{\sqrt{N}},
\end{equation*}   
where inequality (a) follows from \citep[Lemma 1]{gast2023linear}.

Furthermore, from the induction hypothesis we have $\eqref{eq:part-II} \le C_{h+1}/\sqrt{N}$. Hence it suffices to define
\begin{equation*}
  C_h := (\rmax + L_{V,h+1} L_{h})  + C_{h+1} + L_{V,h+1} \sqrt{S}
\end{equation*}
to complete the induction step for proving Equation~\eqref{eq:middle-step-I}, and Equation~\eqref{eq:fluid-and-N-sys-gap} holds as well since it appears in \eqref{eq:part-III}.

\paragraph{Proof of \eqref{eq:QLP-larger-QN}.}  

One has
\begin{align*}
   & \ \QLP(\bx,\bY_h,h) - Q^N(\bx,\bY_h,h) \\
 = & \  \VLP(\phi_h(\bY_h), h+1) - \expect{\VNopt(\bXN_{h+1}, h+1) \mid \bY_h} \\
 \ge_{(a)} & \ \expect{\VLP(\bXN_{h+1}, h+1) \mid \bY_h}  - \expect{\VNopt(\bXN_{h+1}, h+1) \mid \bY_h}  \\
 \ge_{(b)} & \ 0,
\end{align*}
where inequality (a) follows from Jensen's inequality with the facts that the value function $\bx \mapsto \VLP(\bx,h+1)$ is concave, and $\mathbb{E}_{\bXN_{h+1} \mid \bY_h \simd \text{Equation}\eqref{eq:rv-Nsys} } \left[ \bXN_{h+1} \mid \bY_h \right] = \phi_h(\bY_h)$; inequality (b) follows since $\VNopt(\bx,h) \le \VLP(\bx,h)$.

\subsection{Proof of Lemma~\ref{lem:vd} (Value difference)}

We prove this result using backward induction. We will drop $N$ in the superscript and the policies in the subscript to simplify the notation, and drop $h$ in the subscript in the state variable $\bx_h$ and $\bX_h$, without causing confusion. 

We also note that $V(\bx, h)$ is not defined if $\bx \notin \simpXN$, i.e, if $N\bx$ is not an integer vector, so we define $V(\bx, h)=\tilde{V}(\bx,h)$ if $\bx \notin \simpXN$. Similarly, we define $Q(\bx,\by_h, h)=\tilde{Q}(\bx,\by_h,h)$ if $\bx \notin \simpXN$, and $Q(\bx,\by_h, h)={Q}(\bx,\round(\by_h),h)$ if  $\bx \in \simpXN$ but $\by_h \notin \simpYN$.

We first consider $h=H.$  The result bound holds for both the Q-function and value function if $\bx \notin \simpXN$ by definition. Now if $\bx \in \simpXN$, then
\begin{align*}
Q(\bx, \by_H, H) = & \ Q(\bx, \round(\by_H), H)=\br_H \round(\by_H)^\top, \\    
\tilde{Q}(\bx, \by_H, H) = & \ \br_H \by^\top_H,
\end{align*} 
which implies 
\begin{equation*}
  |Q(\bx, \by_H, H)-\tilde{Q}(\bx, \by_H, H)| = \calO \left(\frac1N\right)
\end{equation*}    
due to rounding error detailed in Appendix~\ref{app:details-Gaussian}. Furthermore, for $\|\bx-\bx^*_H\|_\infty \leq z_H \Ndelta$ and $\bx \in \simpXN$, we first note that $\pi^*(\bx, H), \tilde{\pi}^*(\bx, H) \in{\cal Y}_{\bx}\bigcap {\cal Y}_{\kappa \Ndelta H}$ according to the definition of policy class $\pidelta$. 
Now if $V(\bx, H)\geq \tilde{V}(\bx, H),$ then 
\begin{align*}
  & |V(\bx,H)-\tilde{V}(\bx,H)|\\
= & \ V(\bx,H)-\tilde{V}(\bx,H)\\
= & \ Q(\bx, \pi^*(\bx,H), H) - \tilde{Q}(\bx, \tilde{\pi}^*(\bx,H),H)\\
\leq & \ {Q}(\bx, {\pi}^*(\bx,H), H)-\tilde{Q}(\bx, {\pi}^*(\bx,H),H)\\
= & \ \calO\left(\frac1N\right),
\end{align*} 
where the inequality holds because $$\tilde{Q}(\bx, \tilde{\pi}^*(\bx,H),H) = \max_{\by_H \in{\cal Y}_{\bx}\bigcap {\cal Y}_{\kappa z_H \Ndelta}}\tilde{Q}(\bx, \by_H, H)\geq \tilde{Q}(\bx, {\pi}^*(\bx,H),H).$$ 

By symmetry we can similarly show that when  $V(\bx, H)< \tilde{V}(\bx, H),$
\begin{align*}
|V(\bx,H)-\tilde{V}(\bx,H)| = \calO\left(\frac1N\right).
\end{align*}

We now prove the induction step, and suppose that: (i) for any $\bx$ such that $\bx\in{\cal X}_{z_{h+1}\Ndelta}$,
\begin{equation*}
  \left|\tilde{V}(\bx, h+1)-V(\bx, h+1)\right| = \tON;
\end{equation*}
and (ii) for $\bx\in{\cal X}_{z_{h+1}\Ndelta}$ and $\by_h \in {\cal Y}_{\bx}\bigcap{\cal Y}_{\kappa z_{h+1}\Ndelta}$,
\begin{equation*}
  \left|\tilde{Q}(\bx, \by_h, h+1)-Q(\bx,\by_h, h+1)\right| = \tON.
\end{equation*}

We recall that by definition if $\bx \notin \simpXN$, then
\begin{equation*}
  Q(\bx,\by_h, h)=\tilde{Q}(\bx,\by_h, h)\quad\hbox{and}\quad V(\bx, h)=\tilde{V}(\bx, h).
\end{equation*}
Hence we only need to consider $\bx$ such that $\bx \in \simpXN$. We first consider the Q-function for  $\bx\in{\cal X}_{z_h\Ndelta}$ and $\by_h \in {\cal Y}_{\bx}\bigcap{\cal Y}_{\kappa z_h\Ndelta}$:
\begin{align*}
    & Q(\bx,\by_h,h) - \tilde{Q}(\bx,\by_h,h)  \\
  = & \ \expect{V(\bX, h+1)} - \expect{\tilde{V}(\tilde{\bX}, h+1)} \\
  = & \ \expect{V(\bX, h+1)} - \expect{\tilde{V}(\bX, h+1)}+\expect{\tilde{V}(\bX, h+1)}-\expect{\tilde{V}(\tilde{\bX}, h+1)},
\end{align*} 
where $\bX$ and $\tilde{\bX}$ are the states at stage $h+1$ in the $N$-system and Gaussian stochastic system, respectively, under action $\round(\by_h)$ and $\by_h$, respectively.

We first bound $ \expect{V(\bX, h+1)} - \expect{\tilde{V}(\bX, h+1)}$. We have 
\begin{align*}
    & \expect{V(\bX, h+1)} - \expect{\tilde{V}(\bX, h+1)}\\
  = & \ \expect{\left(V(\bX, h+1)-\tilde{V}(\bX, h+1)\right)1_{ \left\{\bX\in{\cal X}_{z_{h+1}\Ndelta}\right\} }}  \\
   & \qquad + \expect{\left(V(\bX, h+1)-\tilde{V}(\bX, h+1)\right)1_{ \left\{\bX\notin{\cal X}_{z_{h+1}\Ndelta}\right\} }}   \\
  \leq & \ \tON + \OlogN = \tON,
\end{align*}
where the last inequality is based on the induction assumption and the high probability bound in Item~\ref{lem-high-proba-item-1} of Lemma~\ref{lem:high-proba-bound}.

Next to bound $\expect{\tilde{V}(\bX, h+1)}-\expect{\tilde{V}(\tilde{\bX}, h+1)}$, we construct a joint distribution $(\bX', \Xdiff')$ such that: 
\begin{enumerate}[label=(\roman*)]
  \item $\bX' \simd \bX$ and $\Xdiff' \simd \Xdiff$,
  \item $\expect{\norminf{\bX' - \Xdiff'}} \leq 2 \, d_W^{(1)}(\bX', \Xdiff') = \tON$.
\end{enumerate}
This is possible due to the Wasserstein distance bound established in Lemma~\ref{lem:Wasserstein-multivariate}.

We then apply the almost locally Lipschitz property of the value function $\tilde{V}$ in Lemma~\ref{lem:local-Lipschitz}, which states that for any $\bx', \tilde{\bx}' \in{\cal X}_{z_{h+1}\Ndelta}$, it holds true that
\begin{equation*}
    \tilde{V}(\bx', h+1) - \tilde{V}(\tilde{\bx}', h+1) \leq u_{h+1} \|\bx' - \tilde{\bx}'\|_\infty + \OlogN.
\end{equation*}
Set $\bx'=\bX'$ and $\tilde{\bx}'=\Xdiff'$ and then take expectation (which are coupled in the same probability space), we have
\begin{align*}
  & \expect{\tilde{V}(\bX, h+1) \mid \bX\in{\cal X}_{z_{h+1}\Ndelta}}-\expect{\tilde{V}(\tilde{\bX}, h+1) \mid \tilde{\bX}\in{\cal X}_{z_{h+1}\Ndelta}} \\
 = & \ \expect{\tilde{V}(\bX', h+1) - \tilde{V}(\tilde{\bX}', h+1) \mid \bX',\tilde{\bX}'\in{\cal X}_{z_{h+1}\Ndelta}} \\
 \le & \ u_{h+1} \expect{\norminf{\bX' - \Xdiff'}\mid \bX',\tilde{\bX}'\in{\cal X}_{z_{h+1}\Ndelta}} + \OlogN \\
 =_{(a)} & \ \tON + \OlogN = \tON,
\end{align*} 
where equality (a) holds because given $\bx\in{\cal X}_{z_h\Ndelta}$ and $\by_h \in {\cal Y}_{\bx}\bigcap{\cal Y}_{\kappa z_h\Ndelta}$, we have
\begin{align*}
    \expect{\norminf{\bX' - \Xdiff'}} \geq & \ \expect{\norminf{\bX' - \Xdiff'}\mid \bX',\tilde{\bX}'\in{\cal X}_{z_{h+1}\Ndelta}} \proba{\bX',\tilde{\bX}'\in{\cal X}_{z_{h+1}\Ndelta}} \\
    = & \ \expect{\norminf{\bX' - \Xdiff'}\mid \bX',\tilde{\bX}'\in{\cal X}_{z_{h+1}\Ndelta}} \left(1-\OlogN \right),
\end{align*} 
where the last equality holds due to Item~\ref{lem-high-proba-item-1} and Item~\ref{lem-high-proba-item-2} of the high probability bound Lemma \ref{lem:high-proba-bound}. Hence 
\begin{align*}
    \expect{\norminf{\bX' - \Xdiff'}\mid \bX',\tilde{\bX}'\in{\cal X}_{z_{h+1}\Ndelta}}=\expect{\norminf{\bX' - \Xdiff'}} \left(1+\OlogN \right)= \tON.
\end{align*}

From the analysis above, we conclude that for any $\bx\in{\cal X}_{z_h \Ndelta}$ and $\by_h \in {\cal Y}_{\bx}\bigcap{\cal Y}_{\kappa z_h \Ndelta}$, we have
\begin{align}  \label{eq:Q-value-bound-1}
    |Q(\bx,\by_h,h) - \tilde{Q}(\bx,\by_h,h)| = \tON. 
\end{align} 

Now if $V(\bx, h)\geq \tilde{V}(\bx, h)$, note that $\pi^*(\bx,h)\in{\cal Y}_{\bx}\bigcap{\cal Y}_{\kappa z_h \Ndelta}$, hence 
\begin{align*}
  & |V(\bx,h)-\tilde{V}(\bx,h)|\\
= & \ V(\bx,h)-\tilde{V}(\bx,h)\\
= & \ Q(\bx, \pi^*(\bx,h), h) - \tilde{Q}(\bx, \tilde{\pi}^*(\bx,h),h)\\
\leq & \ {Q}(\bx, {\pi}^*(\bx,h), h)-\tilde{Q}(\bx, {\pi}^*(\bx,h),h) \\
= & \ \tON,
\end{align*} 
where the last equality follows from \eqref{eq:Q-value-bound-1}. By symmetry we can similarly show that when $V(\bx, h)< \tilde{V}(\bx, h)$ we also have
\begin{align*}
|V(\bx,h)-\tilde{V}(\bx,h)| = \tON.
\end{align*}

\subsubsection{Wasserstein distance bound between Gaussian stochastic and $N$-system transitions}
\label{sec:wasserstein-bound}

Define the 1--Wasserstein distance with respect to $\ell_1$-norm by
\begin{equation} \label{eq:1-Wasserstein-def}
   d_{W}^{(1)}(\mu,\nu) :=  \inf_{\gamma\in\Gamma(\mu,\nu)} \int_{\RR^{S}\!\times\RR^{S}} \norm{\bx - \by}_1 d\gamma(\bx,\by),
\end{equation}
where $\Gamma(\mu,\nu)$ denotes the set of all joint distributions of the laws of $\bX$ and $\bY$.

In this section, we prove the following Wasserstein distance bound between Gaussian stochastic and $N$-system transitions.

\begin{lemma}[Wasserstein distance bound]  \label{lem:Wasserstein-multivariate}
Consider any $\bx_h\in {\calX}_{z_h\Ndelta}\cap\simpXN$ and any $\by_h\in \calY_{\bx_h} \cap {\cal Y}_{\kappa z_h\Ndelta}\cap\simpYN$ and assume that $\Xdiff_h=\Xsys_h=\bx_h$ and $\Ydiff_h=\Ysys_h=\by_h$.
Then the $1$-Wassertein distance between the distributions of $\Xdiff_{h+1}$ and $\Xsys_{h+1}$, denoted as $\mu_{\Xdiff_{h+1}}$ and $\mu_{\Xsys_{h+1}}$, respectively, can be bounded as
\begin{equation}
    \wass{\mu_{\Xdiff_{h+1}}}{\mu_{\Xsys_{h+1}}} = \tON.
\end{equation}
\end{lemma}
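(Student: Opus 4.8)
The plan is to compare $\Xdiff_{h+1}$ and $\Xsys_{h+1}$ through an intermediate Gaussian whose covariance matches the $N$-system \emph{exactly}, thereby separating a central-limit error from a covariance-mismatch error. Let $\bW_h\sim\calN(\bzero,\Gamma_h(\by_h))$---crucially with covariance evaluated at the realized action $\by_h$ rather than at $\by^*_h$---and set $\bar\bX_{h+1}:=\proj{\Delta^S}{\phi_h(\by_h)+\bW_h/\sqrt N}$, where $\phi_h$ is the drift map in \eqref{ea:function-phi}. Since the $\ell_2$-projection onto the convex set $\Delta^S$ is non-expansive and $\Xsys_{h+1}$ already lies in $\Delta^S$ (hence is a fixed point of the projection), both pieces of the triangle inequality
\begin{equation*}
\wass{\mu_{\Xsys_{h+1}}}{\mu_{\Xdiff_{h+1}}}\le \wass{\mu_{\Xsys_{h+1}}}{\mu_{\bar\bX_{h+1}}}+\wass{\mu_{\bar\bX_{h+1}}}{\mu_{\Xdiff_{h+1}}}
\end{equation*}
reduce to comparing the \emph{unprojected} objects: the first term is at most $\wass{\mu_{\Xsys_{h+1}}}{\mu_{\phi_h(\by_h)+\bW_h/\sqrt N}}$, and the second at most $\tfrac1{\sqrt N}\wass{\mu_{\bW_h}}{\mu_{\bZ_h}}$, all norms being equivalent in the fixed dimension $S$. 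Lemma~\ref{lem:high-proba-bound} also confirms that the projection is inactive except on an event of probability $\OlogN$.

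For the central-limit term, I would decompose $\Xsys_{h+1}=\tfrac1N\sum_{n=1}^N\eta_n$, where $\eta_n\in\{0,1\}^S$ is the one-hot vector recording the post-transition state of arm $n$; the $\eta_n$ are independent, uniformly bounded, and a direct computation from \eqref{eq:covariance-matrix}--\eqref{eq:Gamma-definition} shows that the covariance of the normalized sum $\bS_N:=\sqrt N(\Xsys_{h+1}-\phi_h(\by_h))$ is \emph{exactly} $\Gamma_h(\by_h)$, matching that of $\bW_h$. Invoking a quantitative multivariate central limit theorem in $1$-Wasserstein distance (a Berry--Esseen/Stein-type bound for normalized sums of independent bounded vectors, whose rate in fixed dimension $S$ is $\tONN$) gives $\wass{\mu_{\bS_N}}{\mu_{\bW_h}}=\tONN$. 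Since $d_W^{(1)}$ is translation-invariant and $1$-homogeneous, rescaling by $1/\sqrt N$ yields $\wass{\mu_{\Xsys_{h+1}}}{\mu_{\phi_h(\by_h)+\bW_h/\sqrt N}}=\tON$; the logarithmic factor hidden in $\tONN$ is precisely the price of the quantitative CLT.

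For the covariance-mismatch term, $\bW_h$ and $\bZ_h$ are centered Gaussians with covariances $\Gamma_h(\by_h)$ and $\Gamma_h(\by^*_h)$. By linearity of \eqref{eq:Gamma-definition}, $\Gamma_h(\by_h)-\Gamma_h(\by^*_h)=\sum_{s,a}\bigl(y_h(s,a)-y^*_h(s,a)\bigr)\bSigma_h(s,a)$, so $\norme{\Gamma_h(\by_h)-\Gamma_h(\by^*_h)}=\calO(\norminf{\by_h-\by^*_h})=\tONN$, using $\by_h\in{\cal Y}_{\kappa z_h\Ndelta}$. I would couple the two Gaussians through a single standard normal $\bg\sim\calN(\bzero,\bI)$, setting $\bW_h=\Gamma_h(\by_h)^{1/2}\bg$ and $\bZ_h=\Gamma_h(\by^*_h)^{1/2}\bg$, whence
\begin{equation*}
\wass{\mu_{\bW_h}}{\mu_{\bZ_h}}\le \expect{\norme{\bW_h-\bZ_h}}\le \norme{\Gamma_h(\by_h)^{1/2}-\Gamma_h(\by^*_h)^{1/2}}\,\expect{\norme{\bg}}.
\end{equation*}
Every $\bSigma_h(s,a)$ has zero row sums, so both covariances annihilate $\bone$; restricting to the tangent space $\{\bv:\bone^\top\bv=\bzero\}$ of the simplex and using that the matrix square root is Lipschitz near the non-degenerate $\Gamma_h(\by^*_h)$ there, the square-root difference is $\calO(\norme{\Gamma_h(\by_h)-\Gamma_h(\by^*_h)})=\tONN$. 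Multiplying by the $1/\sqrt N$ prefactor gives $\wass{\mu_{\bar\bX_{h+1}}}{\mu_{\Xdiff_{h+1}}}=\tON$, and combining with the central-limit term completes the bound.

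The hard part will be this last square-root estimate. The dimension-free H\"older bound $\norme{A^{1/2}-B^{1/2}}\le\norme{A-B}^{1/2}$ (equivalently the Powers--St{\o}rmer inequality) only gives $\tilde\calO(N^{-1/4})$ for the square-root difference, hence $\tilde\calO(N^{-3/4})$ overall---short of the target $\tON$. Obtaining the Lipschitz rate therefore forces me to control the smallest nonzero eigenvalue of $\Gamma_h(\by^*_h)$ on the simplex tangent space uniformly in $N$, and to verify that $\Gamma_h(\by_h)$ inherits the same lower bound throughout the $\tilde\Theta(1/\sqrt N)$-neighborhood of $\by^*_h$ once $N$ is large. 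The quantitative multivariate CLT of the first term is the other delicate ingredient, but since $S$ is fixed it contributes only constants and the unavoidable logarithmic factor.
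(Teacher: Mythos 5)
Your overall architecture matches the paper's proof: an intermediate Gaussian with covariance $\Gamma_h(\by_h)$ evaluated at the realized action, a quantitative multivariate CLT for the multinomial sum (the paper uses \citep[Theorem 2]{valiant2010clt}, giving exactly the $\log N$ factor you anticipate), and a Gaussian-to-Gaussian comparison for the covariance mismatch; your observation that $\mathrm{Cov}\bigl(\sqrt N(\Xsys_{h+1}-\phi_h(\by_h))\bigr)=\Gamma_h(\by_h)$ exactly is correct, and your handling of the projection via non-expansiveness and the fixed-point property of $\Xsys_{h+1}$ is, if anything, cleaner than the paper's Cauchy--Schwarz-plus-high-probability argument.

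The genuine gap is precisely where you flag it: the covariance-mismatch term. Your coupling $\bW_h=\Gamma_h(\by_h)^{1/2}\bg$, $\bZ_h=\Gamma_h(\by^*_h)^{1/2}\bg$ needs $\norme{\Gamma_h(\by_h)^{1/2}-\Gamma_h(\by^*_h)^{1/2}}=\calO\bigl(\norme{\Gamma_h(\by_h)-\Gamma_h(\by^*_h)}\bigr)$, and the Lipschitz property of the matrix square root requires a uniform positive lower bound on the smallest eigenvalue of $\Gamma_h(\by^*_h)$ restricted to the tangent space $\{\bv:\sum_s v(s)=0\}$. No such bound follows from the paper's assumptions: each $\bSigma_h(s,a)$ vanishes whenever $\pp_h(\cdot\mid s,a)$ is a point mass, and $\Gamma_h(\by^*_h)$ can be singular, or have arbitrarily small nonzero eigenvalues, on that subspace --- indeed the paper's proof of Lemma~\ref{lem:high-proba-bound} explicitly treats the case $\Gamma_h(s',s')=0$. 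Without the spectral gap you are stuck with the Powers--St{\o}rmer rate $\tilde{\calO}(N^{-1/4})$ for the square-root difference and hence $\tilde{\calO}(N^{-3/4})$ overall, as you yourself note. The paper avoids matrix square roots altogether by coupling the two Gaussians through a \emph{shared} family of independent per-$(s,a)$ Gaussian increments $\bN_h(s,a)\simd\calN(\bzero,\bSigma_h(s,a))$, writing $\bZ_h=\sum_{s,a}y^*_h(s,a)\bN_h(s,a)$ and $\bZ_h'=\sum_{s,a}y_h(s,a)\bN_h(s,a)$, so that $\expect{\norm{\bZ_h-\bZ_h'}_1}\le\sum_{s,a}\abs{y_h(s,a)-y^*_h(s,a)}\,\expect{\norm{\bN_h(s,a)}_1}=\calO(\norminf{\by_h-\by^*_h})=\tONN$; after the $1/\sqrt N$ prefactor this is $\tON$ with no invertibility requirement anywhere. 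Replacing your square-root coupling with a building-block coupling of this type is what closes the gap; as written, your argument does not reach the claimed rate.
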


\begin{proof}[Proof of Lemma~\ref{lem:Wasserstein-multivariate}]
Recall that given $\Ydiff_h=\Ysys_h=\by_h$, we can write
\begin{equation*}  
  \Xdiff_{h+1} =  \proj{\Delta^S}{\sum_{s,a} y_h(s,a)\pp_h(\cdot \mid s,a)+\frac{\bZ_h}{\sqrt{N}}},
\end{equation*}
where $\bZ_h \simd \calN(\bzero, \Gamma_h(\by_h^*))$, and
\begin{equation*}
\Xsys_{h+1} =\sum_{s,a}\frac{1}{N}\bVarMulti_h^{(s,a)},
\end{equation*}
where each $\bVarMulti^{(s,a)}_h=(\VarMulti^{(s,a)}_h(s'))_{s'\in \calS}\in \mathbb{N}^S$ follows a multinomial distribution $\multi(N y_h(s,a), \pp_h(\cdot \mid s,a))$ and the $\bVarMulti^{(s,a)}_h$'s for different $(s,a)$'s are independent.

We first define two auxiliary random vectors below:
\begin{align*}
    \Xdiff_{h+1}^{(1)} & := \sum_{s,a} y_h(s,a)\pp_h(\cdot \mid s,a)+\frac{\bZ_h}{\sqrt{N}},\\
    \Xdiff_{h+1}^{(2)} & := \sum_{s,a} y_h(s,a)\pp_h(\cdot \mid s,a)+\frac{\bZ_h'}{\sqrt{N}},
\end{align*}
where $\bZ_h' \simd \calN(\bzero, \Gamma_h(\by_h))$.
Then by triangle inequality,
\begin{align*}
\wass{\mu_{\Xdiff_{h+1}}}{\mu_{\Xsys_{h+1}}}
\le \wass{\mu_{\Xdiff_{h+1}}}{\mu_{\Xdiff_{h+1}^{(1)}}}
+\wass{\mu_{\Xdiff_{h+1}^{(1)}}}{\mu_{\Xdiff_{h+1}^{(2)}}}
+\wass{\mu_{\Xdiff_{h+1}^{(2)}}}{\mu_{\Xsys_{h+1}}}.
\end{align*}
We prove that each of these three terms is $\calO\left(\frac{\log N}{N}\right)$ below.

\paragraph{Upper bound on $\wass{\mu_{\Xdiff_{h+1}}}{\mu_{\Xdiff_{h+1}^{(1)}}}$.}
By the definition of the Wasserstein distance, 
\begin{align}
    \wass{\mu_{\Xdiff_{h+1}}}{\mu_{\Xdiff_{h+1}^{(1)}}}&\le \Ex{\left\|\Xdiff_{h+1}-\Xdiff_{h+1}^{(1)}\right\|_1}\\
    &=\Ex{\left\|\Xdiff_{h+1}-\Xdiff_{h+1}^{(1)}\right\|_1 1_{\left\{\Xdiff_{h+1}\neq\Xdiff_{h+1}^{(1)}\right\}}}\\
    &\le \sqrt{\Ex{\left\|\Xdiff_{h+1}-\Xdiff_{h+1}^{(1)}\right\|_1^2}} \sqrt{\Ex{1^2_{\left\{\Xdiff_{h+1}\neq\Xdiff_{h+1}^{(1)}\right\}}}}\label{eq:CS-indicator}\\
    &= \sqrt{\Ex{\left\|\Xdiff_{h+1}-\Xdiff_{h+1}^{(1)}\right\|_1^2}} \sqrt{\proba{\Xdiff_{h+1} \neq \Xdiff_{h+1}^{(1)}}},
\end{align}
where \eqref{eq:CS-indicator} follows from the Cauchy–Schwarz inequality.
Note that by Item~\ref{lem-high-proba-item-2} of Lemma~\ref{lem:high-proba-bound},
\begin{align*}
    \proba{\Xdiff_{h+1} \neq \Xdiff_{h+1}^{(1)}}= \proba{\Xdiff_{h+1}^{(1)} \not \geq \bzero} = \OlogN.
\end{align*}
Furthermore, observe that $\norm{\Xdiff_{h+1} - \Xdiff_{h+1}^{(1)}}_1^2 \le \norm{\Xdiff_{h+1}}_1^2$ because 
\begin{equation*}
   \tilde{X}_{h+1}(s) = 
  \begin{cases}
    \tilde{X}_{h+1}^1(s) & \text{if } \tilde{X}_{h+1}^1(s) \ge 0,\\
    0    & \text{if } \tilde{X}_{h+1}^1(s) < 0.
  \end{cases}
\end{equation*}
Thus
\begin{equation*}
  \Ex{\left\|\Xdiff_{h+1}-\Xdiff_{h+1}^{(1)}\right\|_1^2} \le \Ex{\left\|\Xdiff_{h+1}^{(1)}\right\|_1^2} \le S \Ex{\left\|\Xdiff_{h+1}^{(1)}\right\|_2^2}.
\end{equation*}
Note that
\begin{align*}
    \Ex{\left\|\Xdiff_{h+1}^{(1)}\right\|_2^2}&=\sum_{s'}\Ex{\left(\Xdiff_{h+1}^{(1)}(s')\right)^2}\\
    &=\sum_{s'}\left(\left(\Ex{\Xdiff_{h+1}^{(1)}(s')}\right)^2+\Var{\Xdiff_{h+1}^{(1)}(s')}\right)\\
    &=\sum_{s'}\left(\left(\sum_{s,a} y_h(s,a)\pp_h(s' \mid s,a)\right)^2+\frac{1}{N}\Var{\bZ_h(s')}\right)\\
    &=\sum_{s'}\left(\sum_{s,a} y_h(s,a)\pp_h(s' \mid s,a)\right)^2 \\
    & \qquad +\frac{1}{N}\sum_{s'}\sum_{s,a}y_h^*(s,a)\pp_h(s' \mid s,a)(1-\pp_h(s' \mid s,a))\\
    &\le \left(\sum_{s'} \sum_{s,a} y_h(s,a)\pp_h(s' \mid s,a)\right)^2+\frac{1}{N}\sum_{s'}\sum_{s,a}y_h^*(s,a)\pp_h(s' \mid s,a)\\
    &\le 1+\frac{1}{N}.
\end{align*}
Combining the bounds above yields
\begin{align*}
    \wass{\mu_{\Xdiff_{h+1}}}{\mu_{\Xdiff_{h+1}^{(1)}}}&\le \sqrt{S\left(1+\frac{1}{N}\right)}\cdot \sqrt{\calO\left(\frac{1}{N^{\log N}}\right)}=\calO\left(N^{-\frac{\log N}{2}} \right)=\calO\left(\frac{\log N}{N}\right).
\end{align*}

\paragraph{Upper bound on $\wass{\mu_{\Xdiff_{h+1}^{(1)}}}{\mu_{\Xdiff_{h+1}^{(2)}}}$.}
Note that $\bZ_h$ can be written as $\bZ_h=\sum_{s,a}y_h^*(s,a)\bN_h(s,a)$, where each $\bN_h(s,a) \simd \calN(\bzero,\Sigma_h(s,a))$ and the $\bN_h(s,a)$'s across different $(s,a)$'s are independent.
We couple $\Xdiff_{h+1}^{(1)}$ and $\Xdiff_{h+1}^{(2)}$ by letting $\bZ_h'=\sum_{s,a}y_h(s,a)\bN_h(s,a)$.
Then
\begin{align*}
    \wass{\mu_{\Xdiff_{h+1}^{(1)}}}{\mu_{\Xdiff_{h+1}^{(2)}}}
    &\le \Ex{\left\|\Xdiff_{h+1}^{(1)}-\Xdiff_{h+1}^{(2)}\right\|_1}\\
    &=\frac{1}{\sqrt{N}}\Ex{\left\|\bZ_h-\bZ_h'\right\|_1}\\
    &\le \frac{1}{\sqrt{N}} \sum_{s,a} \abs{y_h(s,a) - y^*_h(s,a)} \expect{\norm{\bN_h(s,a)}_1} \\
  & \le \frac{1}{\sqrt{N}} \sum_{s,a} \abs{y_h(s,a) - y^*_h(s,a)} \sqrt{S} \sqrt{\expect{\norm{\bN_h(s,a)}^2_2}} \\
  &=\frac{1}{\sqrt{N}} \sum_{s,a} \abs{y_h(s,a) - y^*_h(s,a)} \sqrt{S} \sqrt{\sum_{s'}\pp_h(s' \mid s,a)(1-\pp_h(s' \mid s,a))}\\
  &\le \frac{\sqrt{S}}{\sqrt{N}} \norm{\by_h - \by^*_h}_1 \\
  & \le \frac{2S\sqrt{S}}{\sqrt{N}} \norminf{\by_h - \by^*_h} \\
  &=\calO\left(\frac{\log N}{N}\right),
\end{align*}
where the last step has used the assumption that $\by_h\in{\cal Y}_{\kappa z_h\Ndelta}$.

\paragraph{Upper bound on $\wass{\mu_{\Xdiff_{h+1}^{(2)}}}{\mu_{\Xsys_{h+1}}}$.}
We first rewrite $\Xsys_{h+1}$ using the sum of $N$ independent random vectors on $\mathbb{R}^S$.
Let $\be_{s}\in \mathbb{R}^S$ be the vector whose $s$th entry is equal to $1$ and all other entries are equal to $0$.
Since each $\bVarMulti_h^{(s,a)}\sim\multi(N y_h(s,a), \pp_h(\cdot \mid s,a))$, we can write it as
\begin{align*}
    \bVarMulti_h^{(s,a)}=\sum_{i=1}^{Ny_h(s,a)} \psi^{(s,a)}_i,
\end{align*}
where the $\psi^{(s,a)}_i$'s are i.i.d.\ random vectors on $\mathbb{R}^S$ such that $\psi^{(s,a)}_i=\be_{s'}$ with probability $\pp_h(s' \mid s,a)$.
Then 
\begin{align*}
    \Xsys_{h+1}=\frac{1}{N}\sum_{s,a}\sum_{i=1}^{Ny_h(s,a)} \psi^{(s,a)}_i=\sum_{s,a}y_h(s,a)\pp_h(\cdot \mid s,a)+\frac{1}{N}\sum_{s,a}\sum_{i=1}^{Ny_h(s,a)} \left(\psi^{(s,a)}_i-\pp_h(\cdot \mid s,a)\right).
\end{align*}
Let
\begin{equation*}
    \Psi=\sum_{s,a}\sum_{i=1}^{Ny_h(s,a)} \left(\psi^{(s,a)}_i-\pp_h(\cdot \mid s,a)\right).
\end{equation*}
Note that $\Psi$ is the sum of $N$ independent random vectors $\left(\psi^{(s,a)}_i-\pp_h(\cdot \mid s,a)\right)$'s with $\left\|\psi^{(s,a)}_i-\pp_h(\cdot \mid s,a)\right\|_1\le 2$ and $\Cov{\Psi}=N\Gamma_h(\by_h)$.

Recall that 
\[\Xdiff_{h+1}^{(2)}=\sum_{s,a} y_h(s,a)\pp_h(\cdot \mid s,a)+\frac{\bZ_h'}{\sqrt{N}}=\sum_{s,a} y_h(s,a)\pp_h(\cdot \mid s,a)+\frac{1}{N}\sqrt{N}\bZ_h',\] 
where $\bZ_h' \simd \calN(\bzero, \Gamma_h(\by_h))$.
Then
\begin{align*}
    \wass{\mu_{\Xdiff_{h+1}^{(2)}}}{\mu_{\Xsys_{h+1}}}&=\frac{1}{N}\wass{\mu_{\sqrt{N}\bZ_h'}}{\mu_{\Psi}}.
\end{align*}
By \citep[Theorem 2]{valiant2010clt},
\begin{align*}
    \wass{\mu_{\sqrt{N}\bZ_h'}}{\mu_{\Psi}}\le 2S(2.7+0.93\log N).
\end{align*}
Therefore,
\[
\wass{\mu_{\Xdiff_{h+1}^{(2)}}}{\mu_{\Xsys_{h+1}}}=\calO\left(\frac{\log N}{N}\right).
\]
This completes the proof.
\end{proof}

\subsubsection{Almost Locally Lipschitz value function}

\begin{lemma}[Almost Locally Lipschitz value function] \label{lem:local-Lipschitz}
  For all $1 \le h \le H$ and $\bx, \bx'\in{\cal X}_{z_h\Ndelta}$, it holds true that
  \begin{equation*}
    |\tilde{V}(\bx, h)-\tilde{V}(\bx', h)| \leq u_h \|\bx-\bx'\|_\infty + \OlogN,
  \end{equation*}
where $u_h$ is a recursive sequence defined via $u_H = r_{\max}AS(S+1)$, and for $H-1 \ge h \ge 1$,
  \begin{equation}  \label{eq:sequence-u-h}
    u_h = r_{\max}AS(S+1) + \sqrt{S} u_{h+1} L_h (S+1),
  \end{equation}
with $L_h>0$ being the Lipschitz constant of the linear function $\phi_h(\cdot)$ defined in \eqref{ea:function-phi}.
\end{lemma}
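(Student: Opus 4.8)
The plan is to prove the bound by backward induction on $h$, comparing the value at two nearby states $\bx,\bx'\in\calX_{z_h\Ndelta}$ through a carefully constructed \emph{action mapping} that transports a near-optimal action for one state to a feasible action for the other. For the base case $h=H$ we have $\tilde V(\bx,H)=\max_{\by_H\in\calY_{\bx}\cap\calY_{\kappa z_H\Ndelta}}\br_H\by_H^\top$ with no continuation term; assuming w.l.o.g.\ $\tilde V(\bx,H)\ge\tilde V(\bx',H)$ and letting $\by_H$ be the maximizer for $\bx$, I would build a companion action $\by_H'\in\calY_{\bx'}\cap\calY_{\kappa z_H\Ndelta}$ with $\norminf{\by_H-\by_H'}\le(S+1)\norminf{\bx-\bx'}$, so that $\tilde V(\bx,H)-\tilde V(\bx',H)\le\br_H(\by_H-\by_H')^\top\le\rmax\,A S\,(S+1)\norminf{\bx-\bx'}=u_H\norminf{\bx-\bx'}$, and symmetry gives the absolute value.

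The action mapping is the heart of the argument. Given $\by_h$ feasible for $\bx$, I first rescale each state block by setting $\tilde y(s,a):=y_h(s,a)+(x'(s)-x(s))\,y_h(s,a)/x(s)$, which restores the decomposition constraint $\tilde y(s,0)+\tilde y(s,1)=x'(s)$ while moving each entry by at most $\norminf{\bx-\bx'}$. This generally violates the budget $\sum_s y(s,1)=\alpha$ by at most $\sum_s|x'(s)-x(s)|\le S\norminf{\bx-\bx'}$, so in a second, sequential pass I shift this excess pulling mass across states (exactly as in the recursive construction inside the proof of Lemma~\ref{prop:sufficient-condition-of-assumption}), changing each entry by a further amount of at most $S\norminf{\bx-\bx'}$. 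The composite displacement is thus at most $(S+1)\norminf{\bx-\bx'}$ in $\norminf{\cdot}$, which is the source of the $(S+1)$ factor in $u_h$.

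For the inductive step, assume the bound at $h+1$, take $\by_h=\tilde\pi^{N,*}(\bx,h)$ optimal for $\bx$, and let $\by_h'$ be its image under the mapping. Using $\tilde V(\bx',h)\ge\tilde Q(\bx',\by_h',h)$ I bound
\[
\tilde V(\bx,h)-\tilde V(\bx',h)\le \br_h(\by_h-\by_h')^\top+\Ex{\tilde V(\tilde\bX_{h+1},h+1)-\tilde V(\tilde\bX_{h+1}',h+1)},
\]
where $\tilde\bX_{h+1},\tilde\bX_{h+1}'$ are coupled through the \emph{same} noise $\bZ_h$. The reward term is again $\le\rmax A S(S+1)\norminf{\bx-\bx'}$. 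For the continuation, since the shared $\bZ_h$ cancels and the simplex projection is $\ell_2$-nonexpansive,
\[
\norminf{\tilde\bX_{h+1}-\tilde\bX_{h+1}'}\le\norme{\phi_h(\by_h)-\phi_h(\by_h')}_2\le\sqrt S\,L_h\norminf{\by_h-\by_h'}\le\sqrt S\,L_h(S+1)\norminf{\bx-\bx'},
\]
which explains the $\sqrt S$ factor. Applying the induction hypothesis on the event that both next states lie in $\calX_{z_{h+1}\Ndelta}$, and controlling the complementary event (probability $\OlogN$ by Lemma~\ref{lem:high-proba-bound}) via the $\calO(1)$ range of $\tilde V$, the continuation term is $\le\sqrt S\,u_{h+1}L_h(S+1)\norminf{\bx-\bx'}+\OlogN$. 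Summing the two contributions gives $u_h\norminf{\bx-\bx'}+\OlogN$, and symmetry yields the stated two-sided bound.

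The main obstacle is keeping the mapped action $\by_h'$ inside the policy-class neighborhood $\calY_{\kappa z_h\Ndelta}$, which is what makes the inequality $\tilde V(\bx',h)\ge\tilde Q(\bx',\by_h',h)$ legitimate: since $\by_h$ may sit at distance $\kappa z_h\Ndelta$ from $\by^*_h$ while $\norminf{\bx-\bx'}$ can be as large as $2z_h\Ndelta$, the mapping can push $\by_h'$ slightly past the boundary. I expect to resolve this by exploiting the slack built into $\kappa$ (the $1+6S$ term), clipping $\by_h'$ back onto $\calY_{\bx'}\cap\calY_{\kappa z_h\Ndelta}$ — a set that is nonempty and close to $\by^*_h$ because $\bx'$ is near $\bx^*_h$ — at an extra displacement of order $S\norminf{\bx-\bx'}$ that is again absorbable into the constants. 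Getting this feasibility/clipping step exactly right, rather than the routine norm bookkeeping, is where the genuine difficulty lies, and it is precisely the ``careful construction of an action mapping'' the proof outline alludes to.
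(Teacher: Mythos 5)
Your overall architecture matches the paper's proof exactly: backward induction, a $(S+1)$-Lipschitz action mapping from $\calY_{\bx}$ to $\calY_{\bx'}$, coupling the two next states through the same Gaussian noise $\bZ_h$, the $\sqrt{S}$ factor from the $\ell_2$-nonexpansiveness of the simplex projection, and the high-probability bound (Lemma~\ref{lem:high-proba-bound}) to absorb the event that a next state leaves $\calX_{z_{h+1}\Ndelta}$ into the $\OlogN$ term. The recursion $u_h = \rmax AS(S+1) + \sqrt{S}\,u_{h+1}L_h(S+1)$ falls out the same way in both arguments.

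However, there is a genuine gap at precisely the point you flag as the difficulty and then defer. The inequality $\tilde V(\bx',h)\ge \tilde Q(\bx',\by_h',h)$ requires $\by_h'\in\calY_{\bx'}\cap\calY_{\kappa z_h\Ndelta}$, and your proposed construction does not deliver this. Your two-pass map (proportional rescaling followed by shifting the budget excess) can push coordinates outside $\calY_{\kappa z_h\Ndelta}$ or below zero, and the ``clipping'' you suggest as a fix does not work as stated: truncating a single coordinate of $\by_h'$ back into the box $\calY_{\kappa z_h\Ndelta}$ destroys both the decomposition constraint $\by_h'(\cdot,0)+\by_h'(\cdot,1)=\bx'$ and the budget constraint $\sum_s y_h'(s,1)=\alpha$, so any repair must redistribute the clipped mass to other states --- and one must then show that suitable ``donor'' states exist and that the redistribution does not itself violate nonnegativity or the $\kappa z_h\Ndelta$ box. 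This is exactly what the paper's action mapping algorithm (Section~\ref{sec:action-mapping}) does: it initializes with $y_h'(s,1)=\min\{y_h(s,1),x'(s)\}$, then runs a case analysis (Cases 2.1, 2.2, 3) that, for each offending state $s$, locates a donor state $\hat s$ by a counting argument against the budget identity \eqref{eq:pull} (this is where the slack $\kappa>1+6S$ is actually consumed), and Lemma~\ref{lem:delta} tracks the violation measure $\Delta(s,a)$ to show it is driven to zero while each coordinate moves by at most $\norminf{\bx-\bx'}$ per visit, yielding the $(S+1)$ total. None of this is routine bookkeeping, and your proposal contains no substitute for it; as written, the feasibility of $\by_h'$ --- on which the entire induction rests --- is asserted rather than proved.
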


\begin{proof}[Proof of Lemma~\ref{lem:local-Lipschitz}]
  We use backward induction. Starting with $h=H$, consider $\bx, \bx'\in{\cal X}_{H\Ndelta}.$ Without loss of generality, assume $\tilde{V}(\bx, H)>\tilde{V}(\bx',H)$: 
\begin{align*}
    \tilde{V}(\bx, H)-\tilde{V}(\bx',H)=\tilde{Q}(\bx, \tilde{\pi}^*(\bx, H), H)-\tilde{Q}(\bx', \tilde{\pi}^*(\bx', H), H) > 0.
\end{align*}
Define $\by_H = \tilde{\pi}^*(\bx, H)$, and let $\by'_H$ be the control constructed using the action mapping algorithm defined in Section \ref{sec:action-mapping}. From the definition of the value function, we have
\begin{align*}
  \tilde{V}(\bx, H)-\tilde{V}(\bx',H) \leq & \ \tilde{Q}(\bx, \by_H, H)-\tilde{Q}(\bx', \by'_H, H) \\
  = & \ \br_H \by^\top_H - \br_H \by'^\top_H \\
  \leq & \ r_{\max}AS\|\by_H - \by'_H \|_\infty \\
  \leq & \ r_{\max}AS(S+1) \|\bx-\bx'\|_\infty \\
  = & \ u_H \|\bx-\bx'\|_\infty,
\end{align*} 
where the last inequality follows from the Locally Lipschitz action mapping Lemma \ref{lem:action-mapping-construction}.

Now we prove the induction step. Consider $h<H$ and suppose that for any $\bx, \bx'\in{\cal X}_{z_{h+1}\Ndelta}$, we have 
\begin{align}  \label{eq:proof-induction-step}
    |\tilde{V}(\bx, h+1)-\tilde{V}(\bx',h+1)|\leq u_{h+1} \|\bx-\bx'\|_\infty + \OlogN.
\end{align} 

Now consider $\bx, \bx'\in{\cal X}_{z_h\Ndelta}$. Without loss of generality, assume $\tilde{V}(\bx, h)>\tilde{V}(\bx',h)$:
\begin{align*}
    \tilde{V}(\bx, h)-\tilde{V}(\bx',h)=&\tilde{Q}(\bx, \tilde{\pi}^*(\bx, h), h)-\tilde{Q}(\bx', \tilde{\pi}^*(\bx', h), h).
\end{align*} 
Define $\by_h = \tilde{\pi}^*(\bx, h)$, and let $\by'_h$ be the control constructed using the action mapping algorithm in Section \ref{sec:action-mapping}. From the definition of the value function, we have
\begin{align*}
    \tilde{V}(\bx, h)-\tilde{V}(\bx',h) &\leq  \tilde{Q}(\bx, \by_h, h)-\tilde{Q}(\bx', \by'_h, h) \\
    &=  \br_h  \by^\top_h + \expect{\tilde{V}(\bX, h+1) \mid \by_h} - \br_h  \by'^\top_h - \expect{\tilde{V}(\bX',h+1) \mid \by'_h}.
\end{align*}

Here $\Xdiff$ and $\Xdiff'$ are the states at stage $h+1$ in the two Gaussian stochastic systems, respectively, under action $\by_h$ and $\by'_h$ respectively. 
We now couple $\Xdiff$ and $\Xdiff'$ via the same Gaussian term $\bZ_h/\sqrt{N},$ i.e., 
\begin{align*}  
  \Xdiff  = & \proj{\Delta^S}{\sum_{s,a} y_h(s,a)\pp_h(\cdot \mid s,a)+\frac{\bZ_h}{\sqrt{N}}}, \\
  \Xdiff' = & \proj{\Delta^S}{\sum_{s,a} y'_h(s,a)\pp_h(\cdot \mid s,a)+\frac{\bZ_h}{\sqrt{N}}},
\end{align*} and define
\begin{equation*}
  \triangle\bx := \sum_{s,a} y_h(s,a)\pp_h(\cdot \mid s,a)-\sum_{s,a} y'_h(s,a)\pp_h(\cdot \mid s,a) = \phi_h(\by) - \phi_h(\by').
\end{equation*}
Then 
\begin{align}
    &\tilde{V}(\bx, h)-\tilde{V}(\bx',h) \\
    \leq & \quad \br_h \by^\top_h - \br_h \by'^\top_h \label{eq:aa}\\
    &\quad + \expect{ \left( \tilde{V}(\Xdiff, h+1)-\tilde{V}(\Xdiff',h+1) \right) 1_{ \left\{\Xdiff, \; \Xdiff' \in{\cal X}_{z_{h+1}\Ndelta} \right\}} \mid \tilde\bY_h=\by_h, \tilde{\bY}'_h=\by'_h} \label{eq:ab}\\
    & \quad + \expect{ \left( \tilde{V}(\Xdiff, h+1)-\tilde{V}(\Xdiff',h+1) \right) 1_{ \left\{\Xdiff \notin {\cal X}_{z_{h+1}\Ndelta} \; \mathrm{or} \; \Xdiff' \notin {\cal X}_{z_{h+1}\Ndelta} \right\} } \mid \tilde\bY_h=\by_h, \tilde{\bY}'_h=\by'_h}.\label{eq:ac}  
\end{align} 
Note that according to the Locally Lipschitz action mapping Lemma \ref{lem:action-mapping-construction}:
\begin{equation*}
  \eqref{eq:aa}\leq  \ r_{\max}AS(S+1)\|\bx-\bx'\|_\infty.
\end{equation*}

From the induction assumption \eqref{eq:proof-induction-step}, we further have 
\begin{align*}
    \eqref{eq:ab}\leq & \ \expect{ \left(u_{h+1} \|\Xdiff-\Xdiff'\|_\infty + \OlogN\right) 1_{ \left\{\Xdiff, \; \Xdiff' \in{\cal X}_{z_{h+1}\Ndelta} \right\}} \mid \tilde\bY_h=\by_h, \tilde{\bY}'_h=\by'_h}\\
    \leq_{(a)} & \ \sqrt{S} u_{h+1} \expect{ \|\triangle \bx\|_\infty  1_{ \left\{\Xdiff, \; \Xdiff' \in{\cal X}_{z_{h+1}\Ndelta} \right\}} \mid \tilde\bY_h=\by_h, \tilde{\bY}'_h=\by'_h}+ \OlogN\\
    \leq_{(b)} & \ \sqrt{S} u_{h+1} L_h \|\by_h - \by'_h \|_\infty+ \OlogN
\end{align*} where inequality (a) holds due to the coupling we defined and the fact that the projection mapping $\bx \to \proj{\Delta^S}{\bx}$ is 1-Lipschitz under the $\norm{\cdot}_2$ norm, the additional $\sqrt{S}$ factor is due to the $\norminf{\cdot}$ norm that we use here (see the deduction of Equation~\eqref{eq:middle-step-3} for a similar situation); and inequality (b) holds because $L_h > 0$ is the Lipschitz constant of the function $\phi_h(\cdot)$. Finally, 
\begin{align*}
    \eqref{eq:ac}\leq & \ Hr_{\max} \expect{1_{ \left\{\Xdiff \notin {\cal X}_{z_{h+1}\Ndelta} \; \mathrm{or} \; \Xdiff' \notin {\cal X}_{z_{h+1}\Ndelta} \right\} } \mid \tilde\bY_h=\by_h, \tilde{\bY}'_h=\by'_h}\\
    \leq & \ Hr_{\max} \proba{\Xdiff \notin {\cal X}_{z_{h+1}\Ndelta}|\tilde{\bY}_h=\by_h \in {\cal Y}_{\bx}\cap {\cal Y}_{\kappa z_h\delta_N}, \Xdiff=\bx\in{\cal X}_{z_h\delta_N}}\\
    & \ + Hr_{\max} \proba{\Xdiff' \notin {\cal X}_{z_{h+1}\Ndelta}|\tilde{\bY}'_h=\by'_h \in {\cal Y}_{\bx}\cap {\cal Y}_{\kappa z_h\delta_N}, \Xdiff'=\bx'\in{\cal X}_{z_h\delta_N}}\\
    = & \ \OlogN,
\end{align*} where the last equality follows from \eqref{eq:middle-step-1}.

Summarizing the results above, we obtain 
\begin{align*}
    \tilde{V}(\bx, h)-\tilde{V}(\bx',h)
        \leq & \ r_{\max}AS(S+1)\|\bx-\bx'\|_\infty + \sqrt{S} u_{h+1} L_h \|\by_h - \by'_h \|_\infty + \OlogN \\
    \leq_{(a)} & \ r_{\max}AS(S+1)\|\bx-\bx'\|_\infty + \sqrt{S} u_{h+1} L_h (S+1) \|\bx-\bx'\|_\infty + \OlogN \\
    =_{(b)} & \ u_h \|\bx-\bx'\|_\infty + \OlogN,
\end{align*} 
where inequality (a) follows from the Locally Lipschitz action mapping Lemma \ref{lem:action-mapping-construction}, and equality (b) holds by definition \eqref{eq:sequence-u-h} of the sequence $u_h$.
\end{proof}

\subsubsection{Locally Lipschitz action mapping}
\label{sec:action-mapping}

To complete the proof of $\tilde{V}(\bx, h)$ being locally Lipschitz, we first present an action mapping algorithm, which is locally Lipschitz, maps an action $\by_h$ for state $\bx$ to an action $\by'_h$ for $\bx'$.

The following algorithm maps an action $\by_h$ for $\bx$ to an action $\by'_h$ for $\bx'.$

{\bf Action mapping algorithm:}
\begin{itemize}
    \item {\bf Initialization:} Given $\bx,$ $\bx'$ and $\by_h$, initialize $\by'_h$ such that 
$$y'_h(s,1)=\min\{y_h(s,1),x'(s)\}\quad\hbox{and}\quad y'_h(s,0)=x'(s)-y'_h(s,1).$$ 
\item {\bf Adjustment:} Iterate each state $1 \le s \le S$ once as follows:
\begin{itemize}
    \item {\bf Case 1:} If $y_h(s,1)\leq x'(s)$ and $\by'_h(s,\cdot)\in{\cal Y}^s_{\kappa z_h \Ndelta},$ then skip. 
    \item {\bf Case 2:} If $y_h(s,1)\leq x'(s)$ and $\by'_h(s,\cdot)\notin{\cal Y}^s_{\kappa z_h\Ndelta},$ then consider two sub-cases:
    \begin{itemize}
        \item {\bf Sub-case 1:} If $y'_h(s,0)>y^*_h(s,0)+\kappa z_h\Ndelta,$ then let $\epsilon = y'_h(s,0)-y^*_h(s,0)-\kappa z_h\Ndelta>0$ and find state $\hat{s}$ such that $y'_h(\hat{s},1)>y^*_h(\hat s,1)+2z_h \Ndelta.$ 
        \item {\bf Sub-case 2:} If $y'_h(s,0)<y^*_h(s,0)-\kappa z_h\Ndelta,$ then let $\epsilon = y'_h(s,0)-y^*_h(s,0)+\kappa z_h\Ndelta<0$ and find state $\hat{s}$ such that $y'_h(\hat{s},1)<y^*_h(\hat s,1)-3z_h \Ndelta$ 
    \end{itemize}
    Adjust $\by'_h(s,\cdot)$ and $\by'_h(\hat s, \cdot)$ as follows: 
        \begin{align*}
        y'_h(s,0)\leftarrow y'_h(s,0)-\epsilon\quad \hbox{and}\quad y'_h(s,1)\leftarrow y'_h(s,1)+\epsilon\\
        y'_h(\hat s,0)\leftarrow y'_h(\hat s,0)+\epsilon\quad \hbox{and}\quad y'_h(\hat s,1)\leftarrow y'_h(\hat s,1)-\epsilon.
    \end{align*}

    \item {\bf Case 3:}  If $y_h(s,1)> x'(s),$ let $\epsilon=y_h(s,1)-x'(s)$ and find $\hat{s}$ that satisfies one of two conditions: (1) $y'_h(\hat s,0)\geq 2 z_h \Ndelta$ and $y^*_h(\hat s,0)=0$ or (2) $y'_h(\hat s, 1)\leq y^*_h(\hat s,1)+\frac{1}{2}\kappa z_h\Ndelta $ and $y_h^*(\hat s,0)>0.$ Then make the following change: 
$$y'_h(\hat{s},0)\leftarrow y'_h(\hat{s},0)-\epsilon \quad\hbox{and}\quad  y'_h(\hat{s},1)\leftarrow y'_h(\hat{s},1)+\epsilon.$$
\end{itemize}
\end{itemize}

\begin{lemma}[Locally Lipschitz action mapping]   \label{lem:action-mapping-construction}
Given $\bx, \bx'\in{\cal X}_{z_h\Ndelta}$ and $\by_h \in {\cal Y}_{\kappa z_h\Ndelta}\bigcap{\cal Y}_{\bx}$, the vector $\by'_h$ obtained by the action mapping algorithm above satisfies  $\by'_h\in{\cal Y}_{\kappa z_h\Ndelta}\bigcap{\cal Y}_{\bx'}$ and $$\|\by_h-\by'_h\|_\infty\leq (S+1)\|\bx-\bx'\|_\infty,$$ i.e. the action mapping is locally Lipschitz.      
\end{lemma}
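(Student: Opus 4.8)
The plan is to verify the three requirements in turn — state feasibility $\by'_h\in{\cal Y}_{\bx'}$ (nonnegativity together with the per-state constraint $y'_h(s,1)+y'_h(s,0)=x'(s)$ and the budget $\sum_s y'_h(s,1)=\alpha$), neighborhood membership $\by'_h\in{\cal Y}_{\kappa z_h\Ndelta}$, and the Lipschitz bound $\norminf{\by_h-\by'_h}\le (S+1)\norminf{\bx-\bx'}$ — by tracking invariants that the algorithm maintains. First I would record what the initialization already supplies: for every $s$ we have $y'_h(s,1)+y'_h(s,0)=x'(s)$ and $\by'_h(s,\cdot)\ge\bzero$, and a componentwise comparison against $\by_h\in{\cal Y}_{\bx}$ gives $\abs{y'_h(s,a)-y_h(s,a)}\le\abs{x'(s)-x(s)}\le\norminf{\bx-\bx'}$ (splitting into $y_h(s,1)\le x'(s)$ and $y_h(s,1)>x'(s)$, and using $y_h(s,1)\le x(s)$). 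The only defects initialization can introduce are a budget shortfall of exactly $\theta:=\sum_{s:\,y_h(s,1)>x'(s)}(y_h(s,1)-x'(s))\ge 0$, and possible expulsion of some coordinate from ${\cal Y}^s_{\kappa z_h\Ndelta}$ (since a perturbation of up to $\norminf{\bx-\bx'}\le 2z_h\Ndelta$ may push $\by'_h(s,\cdot)$ beyond radius $\kappa z_h\Ndelta$). The adjustment loop exists precisely to repair both.

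Next I would show that each adjustment preserves the per-state sum, since every transfer in Cases~2 and~3 moves mass only between $y'_h(s,0)$ and $y'_h(s,1)$ (or between $y'_h(\hat s,0)$ and $y'_h(\hat s,1)$); hence $y'_h(s,1)+y'_h(s,0)=x'(s)$ is invariant and, once nonnegativity is confirmed, $\by'_h\in{\cal Y}_{\bx'}$. For the budget, Case~2 transfers are budget-neutral (the $+\epsilon$ at $s$ is matched by $-\epsilon$ at $\hat s$), whereas each Case~3 step injects exactly the deficit $y_h(s,1)-x'(s)$ that state $s$ incurred at initialization; summing over the Case~3 states shows the loop restores precisely $\theta$, so $\sum_s y'_h(s,1)=\alpha$.

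The heart of the argument — and the step I expect to be the main obstacle — is proving that a valid auxiliary state $\hat s$ always exists in Cases~2 and~3, and that the (possibly repeated) redirection of mass to auxiliary states neither breaks nonnegativity nor expels any coordinate from ${\cal Y}^s_{\kappa z_h\Ndelta}$. I would establish existence by a counting/budget argument: since both $\by_h$ and $\by^*_h$ satisfy $\sum_s y(s,1)=\alpha$, $\by_h$ lies within $\kappa z_h\Ndelta$ of $\by^*_h$, and $\norminf{\bx-\bx'}\le 2z_h\Ndelta$, the total mass to be relocated is $\calO(S z_h\Ndelta)$, so at each step some state still carries the slack demanded by the selection rule (excess $2z_h\Ndelta$ in Sub-case~1, deficit $3z_h\Ndelta$ in Sub-case~2, and the two alternative conditions in Case~3). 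The delicate point is that one state may be selected as $\hat s$ many times, accumulating transfers of order $S z_h\Ndelta$; this is exactly why the neighborhood radius carries $\kappa$ with $\kappa>1+6S$, sizing the buffer to absorb $\calO(S)$ transfers of size $\calO(z_h\Ndelta)$ while keeping every coordinate within $\kappa z_h\Ndelta$ of $\by^*_h$ and nonnegative. Verifying that the hard-coded threshold gaps ($2z_h\Ndelta$, $3z_h\Ndelta$, $\tfrac12\kappa z_h\Ndelta$) are mutually consistent with this accounting, across all sub-cases, is where the bulk of the casework lives.

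Finally, for the Lipschitz estimate I would accumulate perturbations: initialization displaces each of the (at most $S$) coordinates by $\le\norminf{\bx-\bx'}$, and each adjustment relocates an amount $\epsilon_s\le\norminf{\bx-\bx'}$, with all relocations possibly piling onto a single auxiliary coordinate. Summing one direct per-state term with up to $S$ accumulated relocation terms yields $\norminf{\by_h-\by'_h}\le (S+1)\norminf{\bx-\bx'}$. Combining the feasibility, budget, neighborhood, and distance conclusions then gives $\by'_h\in{\cal Y}_{\kappa z_h\Ndelta}\cap{\cal Y}_{\bx'}$ together with the claimed bound, which is the asserted local Lipschitz property.
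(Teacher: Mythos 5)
Your overall strategy mirrors the paper's: analyze the initialization (per-state sums, nonnegativity, componentwise displacement at most $\norminf{\bx-\bx'}$, budget shortfall $\theta$), observe that Case~2 transfers are budget-neutral while the Case~3 transfers restore exactly $\theta$, prove existence of $\hat s$ by a budget/counting contradiction, and obtain the $(S+1)$ factor by charging one displacement to initialization and at most $S$ further relocations (each bounded by $\norminf{\bx-\bx'}$) to the adjustment loop. All of that is sound and is essentially what the paper does.

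The genuine gap is in your justification of $\by'_h\in\calY_{\kappa z_h\Ndelta}$. You argue that $\kappa>1+6S$ ``sizes the buffer to absorb $\calO(S)$ transfers of size $\calO(z_h\Ndelta)$'' onto a repeatedly-selected $\hat s$. That mechanism fails: after initialization a coordinate can already sit at distance essentially $\kappa z_h\Ndelta$ from $\by^*_h$ while still being admissible, so its remaining slack to the boundary of $\calY^{\hat s}_{\kappa z_h\Ndelta}$ can be arbitrarily small, and piling an additional $\calO(Sz_h\Ndelta)$ of drift in the wrong direction would expel it no matter how $\kappa$ compares to $S$. What actually closes this step — and what the paper proves via its auxiliary lemma on the potential $\Delta(s,a)=\max\{0,\,y'_h(s,a)-y^*_h(s,a)-\kappa z_h\Ndelta,\,y^*_h(s,a)-\kappa z_h\Ndelta-y'_h(s,a)\}$ — is that the selection rules are \emph{directional}: $\hat s$ is only chosen when its current coordinates have verified one-sided slack (e.g.\ $y'_h(\hat s,1)>y^*_h(\hat s,1)+2z_h\Ndelta$ immediately before a decrease of at most $2z_h\Ndelta$), so every individual transfer keeps $\Delta$ non-increasing at every coordinate and drives it to zero at the state currently being repaired. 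The constant $\kappa>1+6S$ enters only in the existence argument: if some coordinate is more than $\kappa z_h\Ndelta$ out of place, the budget identity $\alpha-2Sz_h\Ndelta\le\sum_s y'_h(s,1)\le\alpha$ forces some other state to carry the slack that the selection rule demands. You would need to replace the buffer-absorption step with this monotone-potential argument (together with the per-case nonnegativity checks it enables) to complete the proof.
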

\begin{proof}
The first step is to show that the algorithm is feasible, i.e. $\hat s$ defined in Case 2 and Case 3 always exists.  To prove this, we first observe that after initialization, 
\begin{equation}
    \alpha-2Sz_h \Ndelta\leq \sum_s y'_h(s,1)\leq \alpha \label{eq:pull}
\end{equation}
because 
\begin{align*}
  0 \leq \sum_s y_h(s,1)-\sum_sy'_h(s,1) & \ = \sum_s y_h(s,1)-\sum_s\min \{y_h(s,1), x'(s)\} \\
   & \ \leq \sum_{s:y_h(s,1)>x'(s)} x(s)-x'(s)\leq S\|\bx-\bx'\|_\infty \\
   & \ \leq 2Sz_h \Ndelta.
\end{align*}

We now analyze the feasibility of each case:
\begin{itemize}
    \item {\bf Case 2.1}: Note that  $y'_h(s,0)>y^*_h(s,0)+\kappa z_h\Ndelta$ implies 
    \begin{align*}
        y'_h(s,1) = x'(s)-y'_h(s,0) & \ \leq x^*_h(s) + z_h\Ndelta-y^*_h(s,0)-\kappa z_h \Ndelta \\
        & \ = y^*_h(s,1)-(\kappa-1)z_h\Ndelta.
    \end{align*} Now suppose $y'_h(\hat{s}, 1)\leq y_h^*(\hat s, 1)+2z_h \Ndelta$ for all $\hat s\not=s$ then 
    $$\sum_s y'_h(s,1)\leq \alpha +2Sz_h \Ndelta - (\kappa-1)z_h\Ndelta,$$ which contradicts \eqref{eq:pull} since $\kappa>1+6S$. We note that $$y'_h(\hat s, 1)>2z_h \Ndelta$$ and $\epsilon =\Delta(s,0)\leq 2z_h \Ndelta$ according to Lemma \ref{lem:delta} stated and proven below, so $y'_h(\hat s, \cdot)\geq {\bf 0}$ after the adjustment.   

    \item {\bf Case 2.2}: Note that  $y'_h(s,0)<y^*_h(s,0)-\kappa z_h\Ndelta$ implies 
    \begin{align*}
        y'_h(s,1) = x'(s)-y'_h(s,0) & \ \geq x^*_h(s)-z_h\Ndelta-y^*_h(s,0)+\kappa z_h \Ndelta \\
        & \ = y^*_h(s,1)+(\kappa-1)z_h\Ndelta.
    \end{align*} Now suppose $y'_h(\hat{s}, 1)\geq y_h^*(\hat s, 1)-3z_h \Ndelta$ for all $\hat s\not=s,$ then 
    $$\sum_s y'_h(s,1)\geq \alpha + (\kappa-1)z_h\Ndelta-3Sz_h \Ndelta,$$ which contradicts \eqref{eq:pull}. We note that $$y'_h(\hat s, 0)=x'(\hat s)-y'_h(\hat s, 1)\geq x^*_h(\hat s)-z_h \Ndelta - y_h^*(\hat s, 1)+3\Ndelta h=y_h^*(\hat s, 0)+2z_h \Ndelta$$ and $\epsilon =\Delta(s,0)\leq 2z_h \Ndelta$ according to Lemma \ref{lem:delta}, so $y'_h(\hat s, \cdot)\geq {\bf 0}$ after the adjustment. 

    \item {\bf Case 3:} Note that in this case $y^*_h(s, 0)=0$ because otherwise, 
    \begin{align*}
      x'(s)\geq x_h^*(s)-z_h \Ndelta & \ = y_h^*(s,1)+y_h^*(s,0)-z_h\Ndelta \\
      & \ \geq y_h(s,1)-\kappa z_h\Ndelta +y_h^*(s,0)-z_h\Ndelta \\
      & \ >y_h(s,1),
    \end{align*}
     which is a contradiction. Therefore given $\alpha<1,$ there exists $\hat s\not=s$ such that $y^*_h(\hat s,0)>0.$ Note that $y'_h(s,1)=x'(s)\geq x_h^*(s)-z_h\Ndelta \geq y_h^*(s,1)-z_h\Ndelta$. Now Suppose no $\hat s=s$ satisfies condition (1) or condition (2). Then for $\hat{s}$ such that $y^*_h(\hat s, 0)=0,$ 
    $$y'_h(\hat s, 1)=x'(\hat s)-y'_h(\hat s, 0)\geq x_h^*(\hat s)-z_h\Ndelta -2z_h\Ndelta =y_h^*(\hat s, 1)-3z_h\Ndelta,$$ which implies that 
    \begin{align*}
      & \ y'_h(s,1)+\sum_{\hat s\not= s}y'_h(\hat s, 1) \\
    = & \ y'_h(s,1)+\sum_{\hat s\not= s, y^*_h*(\hat s, 0)=0}y'_h(\hat s, 1)+\sum_{\hat s\not= s, y^*_h*(\hat s, 0)>0}y'_h(\hat s, 1)\\
    > & \ \alpha - z_h\Ndelta +\frac{1}{2}\kappa z_h\Ndelta-(S-2)3z_h\Ndelta = \alpha+\left(\frac{1}{2}\kappa -3S+5\right)z_h\Ndelta,
    \end{align*} which contradicts \eqref{eq:pull}. Note in this case, we increase $y'_h(\hat s, 1)$ and decrease $y'_h(\hat s, 0),$ so if $\hat s$ satisfies condition (1), then $\by'_h(\hat s, \cdot)\geq {\bf 0}$ after adjustment because $\epsilon=y_h(s,1)-x'(s)\leq x(s)-x(s')\leq 2z_h\Ndelta$. If $\hat s$ satisfies condition (2), then 
    \begin{align*}
      y'_h(\hat s,0) = x'(\hat s)-y'_h(\hat s,1) & \ \geq x^*_h(\hat s)-h\Ndelta -y^*_h(\hat s, 1)-\frac{1}{2}\kappa z_h \Ndelta \\
      & \ = y_h^*(\hat s, 0)-\left(\frac{1}{2}\kappa+1\right)z_h\Ndelta \\
      & \ =\Theta(1),
    \end{align*}
    so $\by'_h(\hat s, \cdot)\geq {\bf 0}$ after adjustment. 
\end{itemize}
Since the algorithm is always feasible, according to the design of the algorithm, we have $$\sum_s y'_h(s,1)=\alpha,$$ i.e., $\by'_h\in {\cal Y}_{\bx'}.$

The second step of the proof is to show that $$\|\by_h-\by'_h\|_\infty\leq (S+1)\|\bx-\bx'\|_\infty$$ and $$\by'_h\in{\cal Y}_{\kappa z_h\Ndelta}.$$ 
Define $$\Delta(s,a)=\max\{0,y'_h(s,a)-y^*_h(s,a)-\kappa z_h\Ndelta, \; y^*_h(s,a)-\kappa z_h\Ndelta-y'_h(s,a)\},$$ which measures the distance of $y'_h(s,a)$ towards the constrained action set ${\cal Y}_{\kappa z_h \Ndelta.}$  In Lemma \ref{lem:delta}, we will show that ${\bf \Delta}={\bf 0}$ after the adjustment step, which implies that $\by'_h\in{\cal Y}_{\kappa z_h \Ndelta},$ i.e. action $\by'_h$ is in the feasible action set. 

We note that $|y_h(s,\cdot)-y'_h(s,\cdot)|\leq |x(s)-x'(s)|$ for all $s$ because 
 \begin{itemize}
    \item If $y'_h(s,1)=y_h(s,1),$ then $y'_h(s,0)-y_h(s,0)=x'(s)-y_h(s,1)-y_h(s,0)=x'(s)-x(s).$ 
    \item If $y'_h(s,1)=x'(s),$ which occurs $x'(s)<y_h(s,1),$ then $0\leq y_h(s,1)-y'_h(s,1)\leq x(s)-x'(s)$ and $0\leq y_h(s,0)-y'_h(s,0)=x(s)-y_h(s,1)\leq x(s)-x'(s).$ 
\end{itemize}
Since after visiting state $s$, each element of $y'_h$ changes by at most either $\Delta(s,a)\leq \|\bx-\bx'\|_\infty$ or $|y_h(s,1)-x'(s)|\leq \|\bx-\bx'\|_\infty$ according to Lemma \ref{lem:delta}. Therefore, 
$$\|\by_h-\by'_h\|_\infty\leq (S+1)\|\bx-\bx'\|_\infty.$$
\end{proof}

Recall that $\Delta(s,a)=\max\{0, \; y'_h(s,a)-y^*_h(s,a)-\kappa z_h\Ndelta, \; y^*_h(s,a)-\kappa z_h\Ndelta-y'_h(s,a)\}$ and $$\|\bx-\bx'\|_\infty\leq \|\bx-\bx^*\|_\infty+\|\bx'-\bx^*\|_\infty\leq  2z_h\Ndelta.$$
\begin{lemma}
\begin{itemize}
    \item ${\Delta}(s, 1)=0$ and $\Delta(s, 0)\leq |x(s)-x'(s)|$ for all $s$ after the initiation step. 
    \item $\Delta(s,a)$ is non-increasing for all $(s,a)$ at each step of the adjustment phase. 
    \item After state $s$ is visited during the adjustment phase, $\Delta(s,0)=\Delta(s,1)=0.$
\end{itemize} \label{lem:delta}
\end{lemma}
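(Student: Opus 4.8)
The plan is to prove the three assertions in the listed order, treating the first as a base case and the last two as a single joint induction over the steps of the adjustment phase. Throughout I write $\Delta(s,a)=\max\{0,\,|d_a|-\kappa z_h\Ndelta\}$ with $d_a:=y'_h(s,a)-y^*_h(s,a)$, since the three-term maximum in the definition collapses to this form.

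For the first assertion I would argue directly from the initialization rule $y'_h(s,1)=\min\{y_h(s,1),x'(s)\}$, splitting into the two cases $y_h(s,1)\le x'(s)$ and $y_h(s,1)>x'(s)$. In the first case $y'_h(s,1)=y_h(s,1)$, so $\Delta(s,1)=0$ is immediate from $\by_h\in\calY_{\kappa z_h\Ndelta}$. In the second case $y'_h(s,1)=x'(s)$, and I would sandwich $y^*_h(s,1)-z_h\Ndelta\le x'(s)<y^*_h(s,1)+\kappa z_h\Ndelta$ using $\norminf{\bx'-\bx^*}\le z_h\Ndelta$, $x^*_h(s)\ge y^*_h(s,1)$, and $y_h(s,1)\le y^*_h(s,1)+\kappa z_h\Ndelta$, which again yields $\Delta(s,1)=0$. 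For $\Delta(s,0)$ I would invoke the identity $|y'_h(s,0)-y_h(s,0)|\le|x(s)-x'(s)|$ already derived inside the body of Lemma~\ref{lem:action-mapping-construction}, combined with $|y_h(s,0)-y^*_h(s,0)|\le\kappa z_h\Ndelta$, so that $|d_0|\le|x(s)-x'(s)|+\kappa z_h\Ndelta$ and hence $\Delta(s,0)\le|x(s)-x'(s)|$.

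For the remaining two assertions I would run one induction over the adjustment steps $t=1,2,\dots$, proving at step $t$ simultaneously that every $\Delta(s',a)$ is non-increasing and that the visited state $s$ ends with $\Delta(s,0)=\Delta(s,1)=0$. The structural invariant I exploit is that the adjustments preserve $y'_h(s,0)+y'_h(s,1)=x'(s)$, so $d_0+d_1=x'(s)-x^*_h(s)$ with $|x'(s)-x^*_h(s)|\le z_h\Ndelta$. Because each $\Delta\ge 0$ and the base case gives $\Delta(s,1)=0$ after initialization, non-increasingness propagated from steps $<t$ guarantees $\Delta(s,1)=0$ at the start of step $t$; via $d_0+d_1=x'(s)-x^*_h(s)$ this forces $x'(s)-x^*_h(s)>0$ whenever Sub-case~1 triggers (and $<0$ for Sub-case~2). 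In Case~2 the coordinate $y'_h(s,0)$ is moved exactly onto the band boundary, so $\Delta(s,0)$ decreases to $0$, while the matching change to $y'_h(s,1)$ sends $d_1$ to $(x'(s)-x^*_h(s))-\kappa z_h\Ndelta$, whose magnitude is $<\kappa z_h\Ndelta$ precisely because of the sign of $x'(s)-x^*_h(s)$ just deduced, so $\Delta(s,1)$ stays $0$. For the helper state $\hat s$, I would combine its selection thresholds ($d_1(\hat s)>2z_h\Ndelta$ in Sub-case~1, $d_1(\hat s)<-3z_h\Ndelta$ in Sub-case~2, and the two conditions in Case~3, one of which also forces $y^*_h(s,0)=0$ for the visited Case~3 state) with the step-size bound $|\epsilon|\le 2z_h\Ndelta$ — itself following from $|\epsilon|=\Delta(s,0)\le|x(s)-x'(s)|$ in Sub-cases~1/2 and $\epsilon=y_h(s,1)-x'(s)\le x(s)-x'(s)$ in Case~3 — to show the incremented/decremented coordinate of $\hat s$ either stays strictly inside its band or moves monotonically toward it, so $\Delta(\hat s,\cdot)$ cannot increase. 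Coordinates of states other than $s,\hat s$ are untouched.

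The main obstacle is this helper-state bookkeeping: one must check that each threshold built into the choice of $\hat s$ is exactly calibrated against the worst-case $|\epsilon|\le 2z_h\Ndelta$ and the slack $z_h\Ndelta$ in $d_0+d_1$, and that a qualifying $\hat s$ always exists. Existence is precisely the feasibility claim proved inside Lemma~\ref{lem:action-mapping-construction}, which depends only on the first assertion of this lemma, so I would order the arguments as: prove the first assertion unconditionally, borrow only the existence of $\hat s$, and then close the joint induction for the second and third assertions — thereby avoiding circularity. A secondary subtlety is that a state may be modified first as some $\hat s$ and only later visited as $s$; this is exactly why the proof must track $\Delta$ through the entire run and rely on the non-increasing property, rather than comparing each coordinate only to its initialization value.
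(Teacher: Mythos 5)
Your proposal is correct and follows essentially the same route as the paper: a direct verification of the initialization bounds (split on whether $y'_h(s,1)=y_h(s,1)$ or $=x'(s)$), followed by a case-by-case check that each adjustment zeroes out the visited state's $\Delta$ and cannot increase any other $\Delta$, with the helper-state thresholds calibrated against $|\epsilon|\le 2z_h\Ndelta$. Your explicit joint-induction framing and the use of the invariant $d_0+d_1=x'(s)-x^*_h(s)$ to pin down the sign of $x'(s)-x^*_h(s)$ in Sub-cases 1/2 are minor streamlinings of the paper's slightly more ad hoc coordinate-by-coordinate bounds, not a different argument.
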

\begin{proof}
Note that after the initialization, 
if $y'_h(s,1)=y_h(s,1),$ then $\Delta(s,1)=0$ because $\by_h \in {\cal Y}_{\kappa z_h \Ndelta},$ and 
$$y'_h(s,0)=x'(s)-y_h(s,1)=x'(s)-x(s)+y_h(s,0),$$ which implies that $\Delta(s,0)\leq |x(s)-x'(s)|$ because $\by_h \in {\cal Y}_{\kappa z_h \Ndelta}.$ 

If $y'_h(s,1)=x'(s)<y_h(s,1),$ then $$y_h(s,1)>y'_h(s,1)\geq x_h^*(s) - z_h\Ndelta \geq y^*_h(s,1) - z_h\Ndelta,$$ so $\Delta(s,1)=0.$ Furthermore, $$0=y'_h(s,0)=y_h(s,0)-y_h(s,0)=y_h(s,0)-x(s)+y_h(s,1)\geq y_h(s,0)-(x(s)-x'(s)),$$ so $\Delta(s,0)\leq |x(s)-x'(s)|$ because $\by\in {\cal Y}_{\kappa z_h \Ndelta}.$

We will now show $\Delta(s,a)$ is non-increasing in the adjustment step by analyzing each case. 
\begin{itemize}
    \item {\bf Case 2.1:} For $s,$ $\Delta(s,0)$ decreases by $\epsilon=\Delta(s,0),$ so $\Delta(s,0)=0.$ Since 
    $$y'_h(s,1)=x'(s)-y'_h(s,0)\leq x_h^*(s) + z_h\Ndelta - y_h^*(s,0)-\kappa z_h \Ndelta=y^*_h(s,1)-(\kappa-1)z_h\Ndelta,$$ we still have $\Delta(s,1)=0$ after increasing $y'_h(s,1)$ by $\Delta(s,0)\leq 2z_h\Ndelta$. For $\hat s$, we have $\Delta(\hat{s},1)=0$ after decreasing $y'_h(\hat{s},1)>y_h^*(s,1)+2z_h\Ndelta$ by $\Delta(s,0).$ Since $$y'_h(\hat s, 0)=x'(\hat s)-y'_h(\hat s,1)\leq x^*_h(s)+z_h \Ndelta - y_h^*(s,1)-2z_h\Ndelta =y^*_h(s,0)-z_h \Ndelta,$$ increasing its value by $\Delta(s,0)$ only decrease $\Delta(\hat s, 0).$ 

    \item {\bf Case 2.2:} For $s,$ $\Delta(s,0)$ decreases by $\epsilon=\Delta(s,0),$ so $\Delta(s,0)=0.$ Since 
    $$y'_h(s,1)=x'(s)-y'_h(s,0)\geq x_h^*(s)-z_h\Ndelta - y_h^*(s,0)+\kappa z_h \Ndelta=y^*_h(s,1)+(\kappa-1)z_h\Ndelta,$$ we still have $\Delta(s,1)=0$ after decreasing $y'_h(s,1)$ by $\Delta(s,0)\leq 2z_h\Ndelta$. For $\hat s$, we have $\Delta(\hat{s},1)=0$ after increasing $y'_h(\hat{s},1)<y_h^*(s,1)-3z_h\Ndelta$ by $\Delta(s,0).$ Since $$y'_h(\hat s, 0)=x'(\hat s)-y'_h(\hat s,1)\geq x^*_h(s)-z_h \Ndelta -y_h^*(s,1)+3z_h\Ndelta=y^*_h(s,0)+2z_h \Ndelta,$$ decreasing its value by $\Delta(s,0)$ only decrease $\Delta(\hat s, 0).$ 

    \item {\bf Case 3:}   For $\hat s$ that satisfies condition (1), decreasing the value of $y'_h(\hat s, 0)$ decreases $\Delta(\hat s, 0)$ because $y_h^*(\hat s, 0)=0.$ Furthermore, $$y'_h(\hat s, 1)=x'(\hat s )-y'_h(\hat s, 0)\leq x_h^*(\hat s ) + z_h\Ndelta - 2z_h\Ndelta =y_h^*(\hat s, 1) - z_h\Ndelta,$$ so $\Delta(\hat s, 1)$ remains zero after increasing its value by no more than $|x(s)-x'(s)|\leq 2z_h\Ndelta.$ 
    
    For $\hat s$ that satisfies condition (2), we have $\Delta(\hat{s},1)=0$ after increasing $y'_h(\hat{s},1)<y_h^*(s,1)+\frac{1}{2}\kappa z_h \Ndelta$ by $|y_h(s,1)-x'(s)|\leq |x(s)-x'(s)|\leq 2 z_h \Ndelta.$ Since 
    \begin{align*}
      y'_h(\hat s, 0) = x'(\hat s)-y'_h(\hat s,1) & \ \geq x^*_h(s)-z_h \Ndelta-y_h^*(s,1)-\frac{1}{2}\kappa z_h\Ndelta \\
      & \ = y^*_h(s,0)-\left(1+\frac{1}{2}\kappa\right)z_h\Ndelta,
    \end{align*}
    decreasing its value by no more than $|x(s)-x'(s)|\leq 2 z_h\Ndelta$ can only decrease $\Delta(\hat s, 0).$ 
\end{itemize}

\end{proof}

\subsection{Proof of Lemma \ref{lem:ed} (Evaluation difference)}

The proof is almost identical, but simpler, to that of Lemma \ref{lem:vd}, and is based on backward induction. The only difference is 
\begin{align*}
V^N_{\tilde{\pi}^{N,*}}(\bx,h)-\tilde{V}^N_{\tilde{\pi}^{N,*}}(\bx,h)={Q}^N_{\tilde{\pi}^{N,*}}(\bx, \tilde{\pi}^{N,*}(\bx,h), h)-\tilde{Q}^N_{\tilde{\pi}^{N,*}}(\bx, \tilde{\pi}^{N,*}(\bx,h),h),
\end{align*} 
because we are considering the same policy which is evaluated in two systems.

\subsection{High probability bounds}  \label{sec:high-prob-bound}

\begin{restatable}[High probability bound]{lemma}{HighProbaBound} \label{lem:high-proba-bound}
\begin{enumerate}
  \item  \label{lem-high-proba-item-1}  When applying a policy $\pi\in \pidelta$ on the $N$-system following state transition \eqref{eq:rv-Nsys}, its state trajectory $\bX^{\pi}_h$, $1 \le h \le H$, satisfies:
\begin{equation*}
  \proba{\|\bX^\pi_h-\bx^*_h\|_\infty \leq z_h \Ndelta, \; \forall h} = 1 - \OlogN.
\end{equation*}
  \item \label{lem-high-proba-item-2}  When applying a policy $\tilde{\pi} \in \pidelta$ on the Gaussian stochastic system following state transition \eqref{eq:X-c}, its state trajectory $\Xdiff^{\tilde{\pi}}_h$ and action $\bY_h = {\tilde{\pi}}(\tilde{\bX}^{{\tilde{\pi}}}_h)$, $1 \le h \le H$ satisfy:
  \begin{align*}
    & \ \mathbb{P} \Big( \tilde{\bX}^{{\tilde{\pi}}}_{h+1}=\sum_{s,a} Y_h(s,a)\pp_h(\cdot \mid s,a)+\frac{\bZ_h}{\sqrt{N}}  \mid  \|\Xdiff^{\tilde{\pi}}_h-\bx^*_h\|_\infty \leq z_h \Ndelta \Big) \\
    = & \ \mathbb{P} \Big( \sum_{s,a} Y_h(s,a)\pp_h(\cdot \mid s,a)+\frac{\bZ_h}{\sqrt{N}} \geq \bzero  \mid  \|\Xdiff^{\tilde{\pi}}_h-\bx^*_h\|_\infty \leq z_h \Ndelta \Big) \\
    = & \  1 - \OlogN,
  \end{align*}
  and
   \begin{align*}
    \proba{\|\Xdiff^{\tilde{\pi}}_h-\bx^*_h\|_\infty \leq z_h \Ndelta, \; \forall h} = 1 - \OlogN \geq 1-\frac{\tilde{C}}{N^{\log N}},
  \end{align*} 
  where $\tilde{C}>0$ is a constant independent of $N.$
\end{enumerate}
\end{restatable}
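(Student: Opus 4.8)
The plan is to prove both items by a forward induction over the horizon $h$, decomposing the rare event that the trajectory ever leaves its neighborhood into per-step excursions and then union-bounding over the constant number $H$ of steps. The common engine for a single step is this: conditioned on $\norminf{\bx_h - \bx^*_h} \le z_h\Ndelta$, any policy in $\pidelta$ plays an action $\bY_h$ with $\norminf{\bY_h - \by^*_h} \le \kappa z_h\Ndelta$, so the conditional mean of the next state, $\phi_h(\bY_h)$ from \eqref{ea:function-phi}, satisfies $\norminf{\phi_h(\bY_h) - \bx^*_{h+1}} = \norminf{\phi_h(\bY_h) - \phi_h(\by^*_h)} \le \kappa L_h z_h\Ndelta$ since $\bx^*_{h+1} = \phi_h(\by^*_h)$ and $\phi_h$ has Lipschitz constant $L_h$. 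The recursion $z_{h+1} = \sqrt{S}(\kappa L_h z_h + 1)$ leaves a margin $z_{h+1} - \kappa L_h z_h \ge \sqrt{S} \ge 1$, so the next state leaves the $z_{h+1}\Ndelta$-neighborhood only if the random fluctuation about its mean exceeds $(z_{h+1}-\kappa L_h z_h)\Ndelta \ge \Ndelta = 2\log N/\sqrt{N}$ in $\ell_\infty$-norm.

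For Item~\ref{lem-high-proba-item-1} I would write $N\Xsys_{h+1}(s')$ as a sum of $N$ independent $[0,1]$-valued Bernoulli indicators (one per arm) with conditional mean $N\phi_h(\bY_h)(s')$, and apply Hoeffding's inequality coordinatewise with a union bound over $\calS$. At threshold $\Ndelta$ this gives per-step excursion probability $\le 2S\exp(-2N\Ndelta^2) = 2S\,N^{-8\log N} = \OlogN$. The base case is exact because $\Xsys_1 = \xini = \bx^*_1$, and summing the per-step bounds over $h$ through a first-exit decomposition closes this item.

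For Item~\ref{lem-high-proba-item-2} I would first prove the ``no projection'' statement. Because each $\bSigma_h(s,a)$ in \eqref{eq:covariance-matrix} has zero row sums, $\bZ_h$ lies almost surely in $\{\bv : \sum_{s'} v(s') = 0\}$, so $\sum_{s'}(\phi_h(\bY_h) + \bZ_h/\sqrt{N})(s') = 1$ automatically and only coordinatewise nonnegativity can fail. For a coordinate with $\bx^*_{h+1}(s') \ge \delta_0 := \min\{\bx^*_{h+1}(s') : \bx^*_{h+1}(s') > 0\}$, the mean obeys $\phi_h(\bY_h)(s') \ge \delta_0/2$ for large $N$, and a univariate Gaussian tail bound on $Z_h(s')/\sqrt{N}$ gives failure probability $\calOe$. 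For a coordinate with $\bx^*_{h+1}(s') = 0$, every summand $y^*_h(s,a)\pp_h(s'\mid s,a)$ vanishes, which by \eqref{eq:covariance-matrix}--\eqref{eq:Gamma-definition} forces $(\Gamma_h)_{s's'} = 0$; hence $Z_h(s') = 0$ almost surely while $\phi_h(\bY_h)(s') \ge 0$, so nonnegativity holds surely. A union bound over $\calS$ yields the stated $1 - \OlogN$. Given this, the trajectory bound mirrors Item~\ref{lem-high-proba-item-1}: on the no-projection event $\Xdiff_{h+1} = \phi_h(\bY_h) + \bZ_h/\sqrt{N}$, so an excursion forces $\norminf{\bZ_h} > 2\log N$; since $(\Gamma_h)_{s's'} \le \frac14\sum_{s,a} y^*_h(s,a) = \frac14$, the Gaussian tail gives $\proba{|Z_h(s')| > 2\log N} \le 2\exp(-8\log^2 N) = \OlogN$. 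Adding the $\OlogN$ projection probability and union-bounding over $\calS$ and over the $H$ steps gives $1 - \OlogN$; the explicit $\tilde{C}/N^{\log N}$ bound follows by absorbing $H$, $2S$, and the inequality $N^{-8\log N} \le N^{-\log N}$ into a single constant.

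The main obstacle is the degenerate coordinates of the Gaussian system, i.e.\ $\bx^*_{h+1}(s') = 0$: here the mean sits only $\tilde{\Theta}(1/\sqrt{N})$ from the boundary, so a nonzero-variance Gaussian would push it negative with non-negligible probability and break the no-projection claim. The resolution is exactly the decision to build $\Gamma_h$ from $\by^*$ through \eqref{eq:covariance-matrix}--\eqref{eq:Gamma-definition}, which makes the variance vanish precisely on these coordinates; checking this variance-vanishing identity and confirming its compatibility with the margin bookkeeping for the recursive sequence $z_h$ is the step needing the most care.
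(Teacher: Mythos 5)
Your proposal is correct and follows essentially the same route as the paper's proof: Hoeffding plus a union bound with the margin $z_{h+1}\ge \kappa L_h z_h+1$ for the $N$-system, and for the Gaussian system the zero-row-sum observation reducing the projection event to coordinatewise nonnegativity, the vanishing-variance identity $\Gamma_h(s',s')=0$ whenever $x^*_{h+1}(s')=0$, and Gaussian tail bounds at threshold $\sqrt{N}\Ndelta=2\log N$ combined over the $H$ steps. The only cosmetic difference is that for coordinates with $x^*_{h+1}(s')>0$ you bound the nonnegativity failure directly by a Gaussian tail at constant scale $\delta_0/2$, whereas the paper deduces it deterministically on the event that the noise is below $\Ndelta$; both yield the claimed $\OlogN$.
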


\paragraph{Proof of Item~\ref{lem-high-proba-item-1}.}

We recall the definition of the policy class $\pidelta$:
\begin{align*}
& \pidelta :=   \\
& \big\{ \pi: \|\pi(\bx,h)-\by^*_h\|_\infty\leq \kappa z_h \Ndelta,  \ \hbox{ if }  \|\bx-\bx^*_h\|_\infty \leq z_h \Ndelta; \\
& \qquad \hbox{follow a predefined policy},  \ \hbox{ if }  \|\bx-\bx^*_h\|_\infty > z_h \Ndelta \big\}.
\end{align*}
where
\begin{equation} \label{eq:recursive-relation}
  z_1 = 1; \; z_{h+1} = \sqrt{S} \ (\kappa L_h z_h + 1), \; 1 \le h \le H-1.
\end{equation}

In the following analysis, we fix a policy $\pi \in \pidelta$, and define the event $\calE$ when we apply $\pi$ on the $N$-system for $H$ steps:
\begin{equation}\label{eq:event-calE}
  \calE := \left\{ \forall h: \norminf{\bX^{\pi}_h - \bx^*_h} \le z_h \Ndelta \right\}. 
\end{equation}
Our goal is to prove the following high probability bound:
\begin{equation}  \label{eq:high-proba-bound}
  \p(\overline{\calE}) = \OlogN.
\end{equation}

For $1 \le h \le H-1$, to ease the notation let us write $\bx = \bX^{\pi}_h$ and $\by_h = \pi(\bx,h)$. There are $N y(s,a)$ arms in state $s$ whose action is $a$, and each of these arms makes a transition to state $s'$ with probability $\pp_h(s' \mid s,a)$, independently of each other. Hence each coordinate $s'$ of the vector $\bX^{\pi}_{h+1}$ can be written as a sum of $N$ random variables as:
\begin{equation*}
  X^{\pi}_{h+1}(s') =  \frac{1}{N} \sum_{n=1}^{N} 1_{ \left\{ U_n \, \le \, \pp_h(s' \mid s_n,a_n) \right\} },
\end{equation*}
where the $U_n$'s are a total number of $N$ i.i.d. uniform $[0,1]$'s, and $(s_n,a_n)$ is the state-action pair of the $n$-th arm from the aggregated $(\bx,\by)$, for $1 \le n \le N$. Note that
\begin{equation*}
  \expect{ X^{\pi}_{h+1}(s') \mid \by} = \phi_h(\by)(s').
\end{equation*}
Hence, let us write 
\begin{equation*}
  \bE_h := \bX^{\pi}_{h+1} - \phi_h(\by).
\end{equation*}
By using Hoeffding's inequality on $N$ independent Bernoulli random variables, together with a union bound, we deduce that for $\theta > 0$:
\begin{align} \label{eq:hoeffding-bound}
  \proba{\norminf{\bE_h} > \theta} = \proba{\max_{1 \le s' \le S} |E_h(s')| > \theta }  \le 2S \exp\left(-2N \theta^2\right).
\end{align}
Consider the event
\begin{equation}\label{eq:event_calE-prime}
  \calE' := \left\{ \forall 1 \le h \le H-1: \norminf{\bE_h} \le \Ndelta \right\}. 
\end{equation}
Now the claim in \eqref{eq:high-proba-bound} follows from two facts:
(i) $\calE' \subseteq \calE$; (ii) $\proba{\overline{\calE'}} = \OlogN$. 

Fact (i) follows from the definition of the sequence $z_h$ via a recursive argument. Indeed, conditional on $\calE'$, and suppose that $\norminf{\bx - \bx^*_h} \le z_h \Ndelta$ and $\norminf{\by_h - \by^*_h} \leq \kappa z_h \Ndelta$, then
\begin{align*}
  \norminf{\bX^{\pi}_{h+1} - \bx^*_{h+1}} = \norminf{ \phi_{h}(\by) + \bE_h - \phi_h(\by^*_h) } \le \kappa L_h z_h \Ndelta + \Ndelta \le z_{h+1} \Ndelta.
\end{align*}
Fact (ii) follows from \eqref{eq:hoeffding-bound} by taking $\theta = \Ndelta= \frac{2 \log N}{\sqrt{N}}$ and a union bound:
\begin{equation*}
  \proba{\overline{\calE'}} \le \sum_{h=1}^{H} \proba{\norminf{\be(h)} > \Ndelta} \le  2HS e^{-2N \Ndelta^2} = \frac{2HS}{N^{8 \log N}} = \OlogN.
\end{equation*}

\paragraph{Proof of Item~\ref{lem-high-proba-item-2}.}

By its construction in \eqref{eq:covariance-matrix} and \eqref{eq:Gamma-definition}, each row of $\Gamma_h(\by^*_h)$ sums to $0$, so for $\bZ_h \simd \calN(\bzero, \Gamma_h(\by^*))$, the sum of its coordinates follows a Gaussian distribution with zero mean and zero variance, i.e. is equal to $0$ almost surely. 

Now let us consider a policy ${\tilde{\pi}} \in \pidelta$ and write $\bY_h = {\tilde{\pi}}(\tilde{\bX}^{{\tilde{\pi}}}_h)$. We prove the first equality by showing that the following two events coincide:
\begin{equation} \label{eq:equivalence-of-events}
  \left\{ \Xdiff^{{\tilde{\pi}}}_{h+1} = \phi_h(\bY_h)+\frac{\bZ_h}{\sqrt{N}} \right\} = \left\{ \phi_h(\bY_h)+\frac{\bZ_h}{\sqrt{N}} \ge \bzero \right\}.
\end{equation}
Indeed, if $\phi_h(\bY_h)+\frac{\bZ_h}{\sqrt{N}} \ge \bzero$, then $\phi_h(\bY_h)+\frac{\bZ_h}{\sqrt{N}}$ already lies in the simplex $\Delta^S$, and hence $\Xdiff^{{\tilde{\pi}}}_{h+1} = \phi_h(\bY_h)+\frac{\bZ_h}{\sqrt{N}}$. Conversely, if $\Xdiff^{{\tilde{\pi}}}_{h+1} = \proj{\Delta^S}{\phi_h(\bY_h)+\frac{\bZ_h}{\sqrt{N}}} = \phi_h(\bY_h)+\frac{\bZ_h}{\sqrt{N}}$, then clearly $\phi_h(\bY_h)+\frac{\bZ_h}{\sqrt{N}} \ge \bzero$.

We next prove that for $1 \le h \le H-1$, the following two inequalities hold:
\begin{align}  \label{eq:middle-step-1}
 \proba{ \|\tilde{\bX}^{{\tilde{\pi}}}_{h+1}-\bx_{h+1}^*\|_\infty \leq z_{h+1} \Ndelta \mid \| \tilde{\bX}^{{\tilde{\pi}}}_h-\bx_h^*\|_\infty\leq z_h \Ndelta} =  \ 1 - \OlogN,
\end{align}
and with $\bY_h = {\tilde{\pi}}(\tilde{\bX}^{{\tilde{\pi}}}_h)$ which by definition of ${\tilde{\pi}}$ satisfies $\bY_h \in \calY_{\bx} \cap \calY_{\kappa z_h \Ndelta}$, that
\begin{align} \label{eq:middle-step-2}
 \proba{ \phi_h(\bY_h)+\frac{\bZ_h}{\sqrt{N}}\geq \bzero \mid \|\tilde{\bX}^{{\tilde{\pi}}}_h-\bx_h^*\|_\infty\leq z_h \Ndelta} 
 =  \ 1 - \OlogN.
\end{align}
Recall that $\bZ_h \simd \calN(\bzero, \Gamma_h)$. Let us assume subsequently that $\max_{s'} \Gamma_h(s',s')>0$. Otherwise, when $\max_{s'} \Gamma_h(s',s')=0$, the vector $\bZ_h={\bf 0}$ almost surely, so \eqref{eq:middle-step-1} and \eqref{eq:middle-step-2} hold trivially. 

For convenience, let us define the following events:
\begin{align*}
  \calE_h & \ := \left\{ \norminf{\frac{\bZ_h}{\sqrt{N}}} \le \Ndelta \right\}, \; \hbox{ for } 1 \le h \le H-1; \\
  \calB_h & \ := \left\{ \phi_h(\bY_h)+\frac{\bZ_h}{\sqrt{N}}\geq \bzero \right\}, \; \hbox{ for } 1 \le h \le H-1; \\
  \calD_h & \ := \left\{ \|\tilde{\bX}^{{\tilde{\pi}}}_h-\bx_h^*\|_\infty\leq z_h \Ndelta \right\}, \; \hbox{ for } 1 \le h \le H.
\end{align*}

The proof is based on the following three key ingredients, which will be proven later:
\begin{enumerate}
  \item \begin{equation}  \label{eq:Gaussian-tail-bound}
          \proba{ \overline{\calE_h}  \mid \calD_h } = \OlogN.
        \end{equation}
  \item \begin{equation}  \label{eq:ingredient-2}
          \proba{\calD_{h+1} \mid \calE_h \cap \calD_h}=1.
        \end{equation}
  \item \begin{equation}   \label{eq:ingredient-3}
          \proba{\calB_{h} \mid \calE_h \cap \calD_h}=1.
        \end{equation}
\end{enumerate}

Based on these results, \eqref{eq:middle-step-1} follows by noting that
\begin{align*}
  & \proba{\|\tilde{\bX}^{{\tilde{\pi}}}_{h+1}-\bx_{h+1}^*\|_\infty \leq z_{h+1} \Ndelta \mid \| \tilde{\bX}^{{\tilde{\pi}}}_h-\bx_h^*\|_\infty\leq z_h \Ndelta} \\
 = & \ \proba{\calD_{h+1} \mid \calD_h} \\
 = & \ \proba{\calD_{h+1} \mid \calE_h \cap \calD_h} \proba{\calE_h \mid \calD_h} + \proba{\calD_{h+1} \mid \calE_h^c \cap \calD_h} \proba{\calE_h^c \mid \calD_h} \\
 \ge & \ \proba{\calD_{h+1} \mid \calE_h \cap \calD_h} \proba{\calE_h \mid \calD_h} \\
 =_{(a)} & \ \proba{\calE_h \mid \calD_h} \\
=_{(b)} & \ \ 1 - \OlogN,
\end{align*}
where equality (a) follows from \eqref{eq:ingredient-2} and equality (b) follows from \eqref{eq:Gaussian-tail-bound}.

As for \eqref{eq:middle-step-2}, we have
\begin{align*}
  & \proba{\phi_h(\bY_h)+\frac{\bZ_h}{\sqrt{N}}\geq \bzero \mid \| \tilde{\bX}^{{\tilde{\pi}}}_h-\bx_h^*\|_\infty\leq z_h \Ndelta} \\
 = & \ \proba{\calB_{h} \mid \calD_h} \\
 = & \ \proba{\calB_{h} \mid \calE_h \cap \calD_h} \proba{\calE_h \mid \calD_h} + \proba{\calB_h \mid \calE_h^c \cap \calD_h} \proba{\calE_h^c \mid \calD_h} \\
 \ge & \ \proba{\calB_{h} \mid \calE_h \cap \calD_h} \proba{\calE_h \mid \calD_h} \\
 =_{(a)} & \ \proba{\calE_h \mid \calD_h} \\
=_{(b)} & \ \ 1 - \OlogN,
\end{align*}
where equality (a) follows from \eqref{eq:ingredient-3} and equality (b) follows from \eqref{eq:Gaussian-tail-bound}.

Since $\tilde{\bX}^{\tilde{\pi}}_1=\bx^*_1,$ which is the initial state, so $\proba{\calD_1}=1$. Now suppose $\proba{\calD_h}=1 - \OlogN$. Then from  \eqref{eq:middle-step-1}, 
\begin{align*}
\proba{\calD_{h+1}}\geq &\proba{\calD_{h+1} \mid \calD_{h}}\proba{\calD_h}=\left(1 - \OlogN\right)\left(1 - \OlogN\right)=1 - \OlogN.
\end{align*}
Then, using the union bound, we obtain
\begin{align*}
    & \proba{\|\Xdiff^{\tilde{\pi}}_h-\bx^*_h\|_\infty \leq z_h \Ndelta, \; \forall h} \\
  = & \ \proba{\calD_H \cap \calD_{H-1} \cdots \cap \calD_1}  \\
  \geq & \ 1- \sum_h \proba{\bar \calD_h}\\
  = & \ 1-\sum_h \OlogN\\
  = & \ 1-\OlogN. 
\end{align*}

\paragraph{Proof of Claim~\eqref{eq:Gaussian-tail-bound}.}

Since events $\calE_h$ and $\calD_h$ are independent,
\begin{align*}  
 \proba{ \overline{\calE_h}  \mid \calD_h } 
= & \ \proba{\norminf{\frac{\bZ_h}{\sqrt{N}}} > \Ndelta} \nonumber \\
  = & \ \proba{\max_{1 \le s' \le S} | Z_h(s')| > \sqrt{N} \Ndelta }  \nonumber \\
\leq & \sum_{s'} \proba{| Z_h(s')| > \sqrt{N} \Ndelta }  \nonumber \\
\leq_{(a)} & \sum_{s':\Gamma_h(s',s')\not=0} 2\exp\left(-\frac{N \Ndelta^2}{2\Gamma_h(s',s')}\right)  \nonumber \\
\le & \ 2S \exp\left(-\frac{N \Ndelta^2}{2 \max_{s'} \Gamma_h(s',s')}\right) \\
\le_{(b)} & \ \frac{2S}{N^{2 \log N}} \\
= & \ \OlogN,
\end{align*}
where inequality (a) follows from the exponential bound for the tail distribution of Gaussian random variables, and inequality (b) is because $\Ndelta= \frac{2 \log N}{\sqrt{N}}$, and 
\begin{equation*}
   \Gamma_h(s',s') = \sum_{s,a}y_h^*(s,a)\pp_h(s' \mid s,a)(1-\pp_h(s' \mid s,a)) \le \sum_{s,a}y_h^*(s,a)\pp_h(s' \mid s,a) = x^*_{h+1}(s') \le 1.
\end{equation*}

\paragraph{Proof of Claim~\eqref{eq:ingredient-2}.}

Conditioned on the event $\calE_h$ and the event $\calD_h$, since $\|\Xdiff^{{\tilde{\pi}}}_{h} - \bx^*_{h} \|_\infty \leq z_h \Ndelta$, by construction of our policy ${\tilde{\pi}}$, we have $\norminf{\bY_h - \by^*_h} \leq \kappa z_h \Ndelta$. Hence
\begin{align} 
   \norminf{\Xdiff^{{\tilde{\pi}}}_{h+1} - \bx^*_{h+1}} = & \ \norminf{ \proj{\Delta^S}{\phi_h(\bY_h) + \frac{\bZ_h}{\sqrt{N}}} - \phi_h(\by^*) } \nonumber \\
 =_{(a)} & \  \norminf{ \proj{\Delta^S}{\phi_h(\bY_h) + \frac{\bZ_h}{\sqrt{N}}} - \proj{\Delta^S}{\phi_h(\by^*)}} \nonumber \\
 \le & \ \norme{ \proj{\Delta^S}{\phi_h(\bY_h) + \frac{\bZ_h}{\sqrt{N}}} - \proj{\Delta^S}{\phi_h(\by^*)}}_2 \nonumber \\
 \le_{(b)} & \ \norme{ \phi_h(\bY_h) + \frac{\bZ_h}{\sqrt{N}} - \phi_h(\by^*) }_2 \nonumber \\
 \le & \ \sqrt{S} \norminf{ \phi_h(\bY_h) + \frac{\bZ_h}{\sqrt{N}} - \phi_h(\by^*)} \nonumber \\
 \le_{(c)} & \ \sqrt{S} \ (\kappa z_h L_h \Ndelta + \Ndelta) = z_{h+1} \Ndelta.  \label{eq:middle-step-3}
\end{align}
Here equality (a) is because $\phi_h(\by^*) \in \Delta^S$; inequality (b) is because the projection mapping $\bx \to \proj{\Delta^S}{\bx}$ is 1-Lipschitz under the $\norm{\cdot}_2$ norm; inequality (c) is by definition of the event $\calE_h$ and the sequnce $z_h$.

\paragraph{Proof of Claim~\eqref{eq:ingredient-3}.}

\textbf{Case I: $x^*_{h+1}(s')=0$.}
We note that when $$x^*_{h+1}(s')=\sum_{s,a} y^*_h(s,a)\pp_h(s'\mid s,a)=0,$$ we have $$y^*_h(s,a)\pp_h(s'\mid s,a)=0, \; \forall (s,a),$$ which means $Z_h(s') \equiv 0$ because $\var{Z_h(s')} = 0$. Hence the transition in the $s'$-th coordinate of \eqref{eq:X-c} becomes deterministic, and we have
\begin{equation}  \label{eq:deterministic-transition-case}
  \sum_{s,a} Y_h(s,a)\pp_h(s' \mid s,a) \ge 0.
\end{equation}

\textbf{Case II: $x^*_{h+1}(s')>0$.} Since we have previously proven that $\proba{\calD_{h+1} \mid \calE_h \cap \calD_h}=1$, we apply \eqref{eq:middle-step-3} to see that for all state $s'$ such that $x^*_{h+1}(s')>0$, we have
\begin{equation*}
  \abs{\tilde{X}^{{\tilde{\pi}}}_{h+1}(s') - x^*_{h+1}(s')} \le z_{h+1} \Ndelta \le z_H \delta_{N},
\end{equation*}
so that
\begin{equation}  \label{eq:stochastic-transition-case}
  \tilde{X}^{{\tilde{\pi}}}_{h+1}(s') \geq x^*_{h+1}(s')- z_H \delta_{N} > 0.
\end{equation} when $N$ is sufficient large because $z_H\delta_N=\Theta(\frac{\log N}{N}).$ Therefore, we have 
\begin{align*} 
  \tilde{X}^{{\tilde{\pi}}}_{h+1}(s')=&\max\left\{\sum_{s,a} Y_h(s,a)\pp_h(s' \mid s,a)+\frac{Z_h(s')}{\sqrt{N}}, 0\right\}\\
  =&\sum_{s,a} Y_h(s,a)\pp_h(s' \mid s,a)+\frac{Z_h(s')}{\sqrt{N}}>0.
\end{align*}

By combining \eqref{eq:deterministic-transition-case} and \eqref{eq:stochastic-transition-case} we deduce that $\proba{\calB_{h} \mid \calE_h \cap \calD_h}=1$.

\section{Proof of Theorem~\ref{thm:lower-bounds}} 
\label{append:proof-of-sqrt-N-gap}

\LowerBounds*

\begin{figure}[h]
  \centering
  \includegraphics[width=0.5\linewidth]{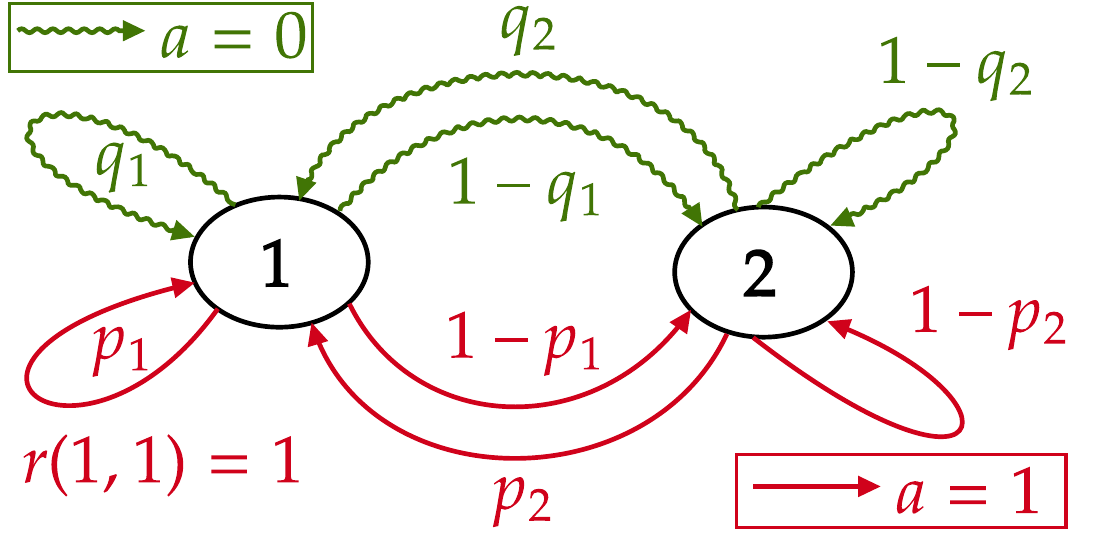}
  \caption{The one-armed MDP with two states. The only non-zero reward is under action $1$ in state $1$.}
  \label{fig:MDP}
\end{figure}

\subsubsection*{Proof of Theorem~\ref{thm:lower-bounds}.}
We prove the theorem by giving a set of RMAB instances and show that for each of them,
\begin{align*}
    \VNopt(\xini,1) - \VNLPres(\xini,1) &= \Theta(1 / \sqrt{N}),\\
    \VLP(\xini,1) - \VNopt(\xini,1) &= \Theta(1 / \sqrt{N}).
\end{align*}
We omit the $(\xini,1)$ for each value function in this proof for conciseness.

\paragraph{Constructed RMAB instances.}
This set of RMAB instances generalizes the example discussed in Section~\ref{sec:example}.
In particular, consider two-state RMABs with horizon $H=2$ and pulling budget $\alpha=0.5$.
The rewards are given by $r_1(1,1) = r_2(1,1)=1$, with all other $(h,s,a)$-tuples being $r_h(s,a)=0$. 
Let
\begin{align*}
    \pp_1(1 \mid 1,1) &= p_1, & \pp_1(1 \mid 1,0) &= q_1, \\
    \pp_1(1 \mid 2,1) &= p_2, & \pp_1(1 \mid 2,0) &= q_2, 
\end{align*}
where we assume that
\begin{equation}
    q_1+p_2-p_1-q_2 >1.
\end{equation}
One can verify that the example in Section~\ref{sec:example} satisfies this condition.
This one-armed MDP is illustrated in Figure~\ref{fig:MDP}. 
In each $N$-system, the initial condition at $h=1$ is: $N/2$ arms are in state $1$, and the other $N/2$ arms are in state $2$.

\paragraph{Optimal value function.}
Similar to the example in Section~\ref{sec:example}, the $N$-system problem can be written as
\begin{equation}
    \VNopt = \max_{0 \le Y_1(1,1) \le 0.5} Y_1(1,1) + \expect{\min \left\{0.5,X_2(1)\right\}},
\end{equation}
which we rewrite into the equivalent form below for notational convenience
\begin{equation}  \label{eq:N-sys-simple-formulation}
  \VNopt = \max_{\beta\colon 0\le\beta\le 0.5, N\beta\in\mathbb{N}} \beta + \expect{\min \left\{0.5, \frac{G(N,\beta)}{N} \right\}},
\end{equation}
where we use $\beta$ is the action $Y_1(1,1)$ and $G(N,\beta)=NX_2(1)$ given $Y_1(1,1)=\beta$.
Note that $G(N,\beta)$ can be written as
\begin{equation*}
    G(N,\beta)=U^{(1,1)}(N,\beta)+U^{(1,0)}(N,\beta)+U^{(2,1)}(N,\beta)+U^{(2,0)}(N,\beta),
\end{equation*}
where
\begin{align*}
    U^{(1,1)}(N,\beta)\sim\textrm{Binomial}(N\beta,p_1),\quad &U^{(1,0)}(N,\beta)\sim\textrm{Binomial}(N(0.5-\beta),q_1),\\
    U^{(2,1)}(N,\beta)\sim\textrm{Binomial}(N(0.5-\beta),p_2),\quad&U^{(2,0)}(N,\beta)\sim\textrm{Binomial}(N\beta,q_2),
\end{align*}
and they are independent.

\paragraph{Fluid LP.}
Based on this $N$-system problem, we can see that the fluid LP of this problem is
\begin{equation}
    \VLP=\max_{0\le\beta\le 0.5}\beta+\min\{0.5,g(\beta)\},
\end{equation}
where
\begin{equation*}
    g(\beta)=\expect{\frac{G(N,\beta)}{N}}=0.5(q_1+p_2)-(q_1+p_2-p_1-q_2)\beta,
\end{equation*}
and recall that $q_1+p_2-p_1-q_2>q_1+p_2-p_1-q_2-1>0$.
One can verify that this LP has a unique solution, and the optimal solution, denoted as $\beta^*_{\textrm{LP}}$, satisfies that $g(\beta^*_{\textrm{LP}})=0.5$.
Thus $\VLP=\beta^*_{\textrm{LP}}+0.5$.

\paragraph{Optimal value function and Q-function in the $N$-system and the Gaussian stochastic system.}
We now connect the optimal value function and Q-function in the $N$-system with those in the Gaussian stochastic system.
The Q-function of the $N$-system can be written as
\begin{equation}
    Q^N(\beta)=\beta + \expect{\min \left\{0.5, \frac{G(N,\beta)}{N} \right\}},
\end{equation}
where $0\le\beta\le 0.5$ and $N\beta$ is a nonnegative integer,
and then
\begin{equation*}
    \VNopt=\sup_{\beta\colon 0\le\beta\le 0.5, N\beta\in\mathbb{N}}Q^N(\beta).
\end{equation*}
Similarly, for the Gaussian stochastic system, the Q-function can be written as
\begin{equation}
    \tilde{Q}^N(\beta)=\beta + \expect{\min \left\{0.5, \frac{wZ}{\sqrt{N}}+g(\beta)\right\}},
\end{equation}
where $Z$ follows a standard Gaussian distribution and
\begin{equation}\label{eq:w}
  w = \sqrt{\beta^*_{\mathrm{LP}}p_1(1-p_1) + (0.5-\beta^*_{\mathrm{LP}})q_1(1-q_1) + (0.5-\beta^*_{\mathrm{LP}})p_2(1-p_2) + \beta^*_{\mathrm{LP}}q_2(1-q_2)}.  
\end{equation}
Then
\[
\VNdiffopt=\sup_{0\le\beta\le 0.5}\tilde{Q}^N(\beta).
\]

Note that prior work \citep[Theorem 1]{gast2023linear} has shown that $\VNopt-\VNLPres\le \VLP-\VNLPres=O(1/\sqrt{N})$.
So we only need to show $\VNopt-\VNLPres=\Omega(1/\sqrt{N})$.
Since the fluid LP satisfies the Uniqueness Assumption~\ref{ass:opt-lp-distance}, by Lemma~\ref{prop:sufficient-condition-of-assumption} and Lemma~\ref{lem:vd}, we know that
\begin{align*}
|Q^N(\beta)-\tilde{Q}^N(\beta)|&=\tilde{\calO}(1/N),\\
|\VNopt-\VNdiffopt|&=\tilde{\calO}(1/N).
\end{align*} 
Furthermore, since all fluid policies in $\pisub$ takes action $y_1(1,1)=\beta^*_{\mathrm{LP}}$ at time step $1$, we have
$$\VNLPres \leq {Q}^N(\beta^*_{\mathrm{LP}})= \tilde{Q}^N(\beta^*_{\mathrm{LP}})+\tilde{\calO}(1/N),$$
and thus $$\VNopt-\VNLPres\ge \VNdiffopt-\tilde{Q}^N(\beta^*_{\mathrm{LP}})+\tilde{\calO}(1/N).$$
Therefore, to prove the theorem, it suffices to prove that 
\begin{align*}
    \VNdiffopt - \tilde{Q}^N(\beta^*_{\mathrm{LP}})&= \Omega(1 / \sqrt{N}),\\
    \VLP - \VNdiffopt &= \Theta(1 / \sqrt{N}).
\end{align*}

\paragraph{Equivalent forms of the Q-functions.}
With slight abuse of notation, we now rewrite the Q-functions of the $N$-system and the Gaussian stochastic system into an equivalent form and use this form in the remainder of the proof.
We first rewrite the action $\beta$ as $\beta=\beta^*_{\textrm{LP}}+\frac{c}{\sqrt{N}}$.
Then for the $N$-system,
\begin{equation}
    Q^N(c)=\beta^*_{\textrm{LP}}+\frac{c}{\sqrt{N}}+ 0.5+\expect{\min \left\{0, \frac{G(N,\beta^*_{\textrm{LP}}+\frac{c}{\sqrt{N}})-0.5N}{N} \right\}},
\end{equation}
and thus
\begin{align*}
    \sqrt{N}\left(Q^N(c)-\beta^*_{\textrm{LP}}-0.5\right)
    &=c+\expect{\min \left\{0, \frac{G(N,\beta^*_{\textrm{LP}}+\frac{c}{\sqrt{N}})-0.5N}{\sqrt{N}} \right\}}\\
    &=c+\expect{\min\left\{0,\hat{G}(N, c)\right\}},
\end{align*}
where
\begin{equation*}
  \hat{G}(N,c) := \frac{G(N,\beta^*_{\mathrm{LP}}+\frac{c}{\sqrt{N}}) - 0.5 N}{\sqrt{N}}.
\end{equation*}
Note that by the central limit theorem, for any fixed constant $c$ independent of $N$, 
\begin{align*}
\hat{G}(N,c) &=  \frac{G(N,\beta^*_{\mathrm{LP}}+\frac{c}{\sqrt{N}}) - \expect{G(N,\beta^*_{\mathrm{LP}}+\frac{c}{\sqrt{N}})}}{\sqrt{N}}-c(q_1+p_2-p_1-q_2)\\
&\mspace{-36mu}\cvd wZ-c(q_1+p_2-p_1-q_2),
\end{align*}
where $Z$ follows a standard Gaussian distribution and $w$ is the same as the one defined in \eqref{eq:w}.

Similarly, for the Gaussian stochastic system,
\begin{align*}
    \tilde{Q}^N(c)&=\beta^*_{\textrm{LP}}+\frac{c}{\sqrt{N}}+ 0.5+\expect{\min \left\{0, \frac{wZ}{\sqrt{N}}+g\left(\beta^*_{\textrm{LP}}+\frac{c}{\sqrt{N}}\right)-0.5\right\}}\\
    &=\beta^*_{\textrm{LP}}+\frac{c}{\sqrt{N}}+ 0.5+\expect{\min \left\{0, \frac{wZ}{\sqrt{N}}-\frac{c}{\sqrt{N}}(q_1+p_2-p_1-q_2)\right\}},
\end{align*}
and thus
\begin{align*}
    \sqrt{N}\left(\tilde{Q}^N(c)-\beta^*_{\textrm{LP}} - 0.5\right)&=c+\expect{\min \left\{0, wZ-c(q_1+p_2-p_1-q_2)\right\}}.
\end{align*}

\subsection{Proof of $\VNopt - \VNLPres = \Theta(1/\sqrt{N})$ in Theorem~\ref{thm:lower-bounds}}
We just showed that it suffices to prove that $\VNdiffopt - \tilde{Q}^N(0) = \Omega(1 / \sqrt{N})$.
We first obtain the value of $\VNdiffopt$.
Note that
\begin{align*}
    \VNdiffopt &=\sup_{c\colon 0\le\beta^*_{\textrm{LP}}+\frac{c}{\sqrt{N}}\le 0.5}\tilde{Q}^N(c)
    \le \sup_{c\colon c\in\mathbb{R}}\tilde{Q}^N(c).
\end{align*}
We now solve for the $c\in\mathbb{R}$ that maximizes $\tilde{Q}^N(c)$.  
Let us use a change of variable $\tau=c(q_1+p_2-p_1-q_2)/w$ for convenience. Then
\begin{align}
    &\mspace{23mu}\sqrt{N}\left(\tilde{Q}^N(c)-\beta^*_{\textrm{LP}} - 0.5\right)\big/w\nonumber\\
    &=c/w+\expect{\min \left\{0, Z-c(q_1+p_2-p_1-q_2)/w\right\}}\nonumber\\
    &=-c(q_1+p_2-p_1-q_2-1)/w+
    \expect{\min \left\{ c(q_1+p_2-p_1-q_2)/w, Z\right\} }\nonumber\\
    &=-\tau\frac{q_1+p_2-p_1-q_2-1}{q_1+p_2-p_1-q_2}
    +\frac{1}{\sqrt{2 \pi}} \left( \int_{-\infty}^{\tau} x  e^{-\frac{x^2}{2}} dx + \int_{\tau}^{\infty} \tau e^{-\frac{x^2}{2}} dx \right) \nonumber\\
    &= -\tau \frac{q_1+p_2-p_1-q_2-1}{q_1+p_2-p_1-q_2}+\frac{1}{\sqrt{2 \pi}} \left( - e^{-\frac{\tau^2}{2}} +\tau\int_{\tau}^{\infty} e^{-\frac{x^2}{2}} dx \right).\label{eq:gap-Qtilde-VLP}
\end{align}
Taking the derivative with respect to $\tau$ gives
\begin{align}
    &\mspace{23mu}\frac{d}{d\tau}\left(-\tau \frac{q_1+p_2-p_1-q_2-1}{q_1+p_2-p_1-q_2}+\frac{1}{\sqrt{2 \pi}} \left( - e^{-\frac{\tau^2}{2}} +\tau\int_{\tau}^{\infty} e^{-\frac{x^2}{2}} dx \right) \right)\nonumber\\
    &=-\frac{q_1+p_2-p_1-q_2-1}{q_1+p_2-p_1-q_2}+\frac{1}{\sqrt{2 \pi}} \left(\tau e^{-\frac{\tau^2}{2}}-\tau e^{-\frac{\tau^2}{2}}+\int_{\tau}^{\infty} e^{-\frac{x^2}{2}} dx \right)\nonumber\\
    &=-\frac{q_1+p_2-p_1-q_2-1}{q_1+p_2-p_1-q_2}+\frac{1}{\sqrt{2 \pi}}\int_{\tau}^{\infty} e^{-\frac{x^2}{2}} dx.\label{eq:deriv-tau}
\end{align}
It is easy to see that there is a unique $\tau^*>0$ that makes the derivative equal to $0$ and maximizes $\tilde{Q}^N(c)$.
Let $c^*=w\tau^*/(q_1+p_2-p_1-q_2)$ be the corresponding maximizer.

Note that $c^*$ is a positive real number independent of $N$ and one can verify that $\beta^*_{\textrm{LP}}<0.5$ under our assumption of the parameters.
Thus, we have $0\le\beta^*_{\textrm{LP}}+\frac{c}{\sqrt{N}}\le 0.5$ for all large enough $N$.
This implies that in fact for all large enough $N$,
\begin{align*}
    \VNdiffopt &=\sup_{c\colon 0\le\beta^*_{\textrm{LP}}+\frac{c}{\sqrt{N}}\le 0.5}\tilde{Q}^N(c)
    =\tilde{Q}^N(c^*).
\end{align*}

With this, we derive an explicit form of $\VNdiffopt-\tilde{Q}^N(0)$ below.
Using calculations similar to those in \eqref{eq:gap-Qtilde-VLP}, we get
\begin{align*}
    &\sqrt{N}\left(\tilde{Q}^N(c^*)-\tilde{Q}^N(0)\right)\big/w\\
    &=-\tau^* \frac{q_1+p_2-p_1-q_2-1}{q_1+p_2-p_1-q_2}+\frac{1}{\sqrt{2\pi}}\left(1-e^{-\frac{(\tau^*)^2}{2}}+\tau^*\int_{\tau^*}^{\infty}e^{-\frac{x^2}{2}}dx\right)\\
    &=\frac{1}{\sqrt{2\pi}}\left(1-e^{-\frac{(\tau^*)^2}{2}}\right),
\end{align*}
which is a positive number independent of $N$.
Therefore, 
\begin{align*}
    \VNdiffopt - \tilde{Q}^N(0)=\Theta(1/\sqrt{N}).
\end{align*}

\paragraph{Remark on the optimal policy for the Gaussian stochastic system.}
Note that the maximizer $c^*$ gives the optimal policy for the Gaussian stochastic system, $\tilde{Y}^*_1(1,1)=\beta^*_{\textrm{LP}}+\frac{c^*}{\sqrt{N}}$. 
The maximizer $c^*$ can be obtained by solving for $\tau^*$, which is the solution to the equation that the derivative in \eqref{eq:deriv-tau} is equal to $0$.
This equation can be numerically solved.
For the example in Section~\ref{sec:example}, we plugged the specific values $p_1 = 0.2$, $p_2 = 0.7$, $q_1 = 0.9$, $q_2 = 0.25$ and got the numerical solution
$c^*=c^*_{\text{d}} = 0.3940$ (recorded with $4$ digits of precision).

\subsection{Proof of $\VLP - \VNopt = \Theta(1/\sqrt{N})$ in Theorem~\ref{thm:lower-bounds}}   \label{append:additinal-details-solving-example}

Recall that we showed that it suffices to prove that $\VLP - \VNdiffopt = \Theta(1 / \sqrt{N}).$
We have also showed that $\VLP=\beta^*_{\textrm{LP}}+0.5$ and that $\VNdiffopt=\tilde{Q}^N(c^*)$ for all large enough $N$.
Then using calculations similar to those in \eqref{eq:gap-Qtilde-VLP} gives
\begin{align*}
    &\mspace{23mu}
    \sqrt{N}(\VLP - \VNdiffopt)/w\\
    &=\sqrt{N}\left(\beta^*_{\textrm{LP}} + 0.5-\tilde{Q}^N(c)\right)\big/w\\
    &= \tau^* \frac{q_1+p_2-p_1-q_2-1}{q_1+p_2-p_1-q_2}+\frac{1}{\sqrt{2 \pi}} \left( e^{-\frac{(\tau^*)^2}{2}} -\tau^*\int_{\tau^*}^{\infty} e^{-\frac{x^2}{2}} dx \right)\\
    &=\frac{1}{\sqrt{2 \pi}} e^{-\frac{(\tau^*)^2}{2}},
\end{align*}
which is a positive constant independent of $N$.
Therefore, $\VLP - \VNdiffopt = \Theta(1 / \sqrt{N})$, which completes the proof.

\section{Proof of Theorem \ref{thm:improvement}}  \label{append:proof-of-performance-improvement}

\PerformanceImprovement*

Let us first recall the construction of the $1/\sqrt{N}$-scale SP in Section~\ref{sec:numerical-experiments}. For SP~\eqref{eq:problem-formulation-Gaussian}, we have introduced the following $1/\sqrt{N}$-scale vectors $(\bc_h, \bd_h)_{h \in \left\{ 1,2,\dots,H\right\}}$: 
\begin{align*}
{\tilde \bY}_h := & \ \by^*_h+\frac{\bc_h}{\sqrt{N}}   \\
\bd_{h+1} := & \ \proj{\sqrt{N}(\Delta^S - \bx^*_{h+1})}{\sum_{s,a} c_h(s,a)\pp_h(\cdot \mid s,a)+{\bZ_h}}  \\
\tilde{\bX}_{h+1} := & \ \bx^*_{h+1}+\frac{\bd_{h+1}}{\sqrt{N}} 
\end{align*}
Substituting this into Problem~\eqref{eq:problem-formulation-Gaussian}, we obtained
\begin{alignat}{2}  
    & \quad \underset{\pi_c\in \Npic}{\mathrm{max}} \quad \sum_{h=1}^{H}  \mathbb{E} \left[ \br_h \bc_h^\top \right] \nonumber \\
    \hbox{s.t.} & \quad \sum_{s} c_h(s, 1) = 0 \ \text{and} \ \bc_h(\cdot, 0) + \bc_h(\cdot, 1) = \bd_h, \quad 1 \le h \le H, \nonumber \\
    & \quad c_h(s, a) \geq - \sqrt{N} y_h^*(s,a), \quad 1 \le h \le H,   \nonumber  \\
    & \quad \bd_1 = \bzero, \quad \quad \bd_{h+1} = \proj{\sqrt{N}(\Delta^S - \bx^*_{h+1})}{\sum_{s,a} c_h(s,a)\pp_h(\cdot \mid s,a)+{\bZ_h}},  \quad 1 \le h \le H-1,  \nonumber
    \end{alignat} 
    where $\Npic$ is the set of policies that maps $\bd_h$ to $\bc_h$ such that if $\|\bd_h\|_\infty \leq z_h \Ndelta\sqrt{N}$, then $\|\bc_h\|_\infty \leq \kappa z_h \Ndelta\sqrt{N}$, and follows a predefined policy otherwise:
    \begin{align*}
    \Npic := \big\{ \pi_c: & \|\pi_c(\bd_h,h)\|_\infty\leq \kappa z_h \Ndelta \sqrt{N},  \ \hbox{ if }  \|\bd_h\|_\infty \leq z_h \Ndelta \sqrt{N}; \\
    & \hbox{follow a predefined policy},  \ \hbox{ if }  \|\bd_h\|_\infty > z_h \Ndelta \sqrt{N} \big\}.
    \end{align*}
Denote by
\begin{equation*}
  v^N_{\pi_c}(\bd_h, h) := \sum_{h'=h}^{H}  \mathbb{E} \left[ \br_{h'} \pi_c(\bd_{h'}, h')^\top \right] 
\end{equation*}
as the value function of policy $\pi_c$ in state $\bd_h$ and step $h$, and let $q^N_{\pi_c}(\bd_h,\bc_h,h)$ to be the corresponding Q-function. 
Note that there is a one-to-one correspondence between $\pidelta$ and $\Npic$: for policy $\pi \in  \pidelta$ and its corresponding $\pi_c \in \Npic$, their value functions satisfy
\begin{equation*}
  \tilde{V}^N_\pi(\bx, h)=\VLP(\bx^*_h,h)+\frac{v^N_{\pi_c}(\bd, h)}{\sqrt{N}},
\end{equation*}
where $\bx = \bx^*_h + \bd/\sqrt{N}$. Moreover, the actions are related via $\pi(\bx,h) = \by^*_h + \pi_c(\bd,h)/\sqrt{N}$.

Recall the definition of the policy class $\pisub$ as
\begin{align*}   
\pisub = \big\{ \pi:  \|\pi(\bx_h,h) - \by^*_h \|_\infty\leq \kappa \|\bx_h - \bx^*_h\|_\infty, \; \forall h \ge 1 \big\}. 
\end{align*}

Note that from the definition of $v^N_{\pi_c},$ we have 
\begin{align*}  
   \tilde{V}^N_{\tilde{\pi}^{N,*}}(\bx_{ini}, 1) = \VLP(\bx_{ini},1) +\frac{{v}^{{N}}_{\tilde{\pi}_c^{{N},*}}(\bzero, 1) }{\sqrt{N}},
\end{align*} and for $\pi\in\pisub$,
\begin{align*}  
    \tilde{V}^N_{\pi}(\x_{ini}, 1) = \VLP(\bx_{ini},1)+\frac{v^N_{\pi_c}(\bzero, 1)}{\sqrt{N}}. 
\end{align*}

Now given $\pi,$ we construct a modified policy $\pi'$ such that $\pi'(\bx, h)=\pi(\bx,h)$ for $\bx$ such that  $\|\bx-\bx^*_h\|_\infty \leq z_h \Ndelta$ and $\pi'(\bx, h)=\pip(\bx, h),$ where $\pip$ is the predetermined policy when defining $\pidelta$. We therefore have $\pi'\in\pidelta$ and $\pi$ and $\pi'$ coincide when their trajectories satisfy  $\|\tilde \bX^{\pi'}_h-\bx^*_h\|_\infty = \|\tilde \bX^{\pi}_h-\bx^*_h\|_\infty \leq z_h \Ndelta$ for all $h.$ Therefore, from Item~\ref{lem-high-proba-item-2} of Lemma~\ref{lem:high-proba-bound}, we have 
\begin{equation}  \label{eq:value-gap-1}
  v^N_{\pi_c}(\bzero, 1) - v^N_{\pi'_c}(\bzero, 1) \le  2 r_{\max}H\sqrt{N}\frac{\tilde{C}}{N^{\log N}}.
\end{equation}

Let ${\tilde{\pi}^{{N},*}_c}$ be an optimal policy in $\Npic$ for Problem~\eqref{eq:problem-formulation-Gaussian-new} that corresponds to the locally-SP-optimal policy $\tilde{\pi}^{N,*}$ for Problem~\eqref{eq:problem-formulation-Gaussian}. Since $\pi'(\bx_{ini})=\pi(\bx_{ini})= \by^*_1$ and by construction $\pi'\in\pidelta$, we have 
\begin{equation*}
  v^N_{\pi'_c}(\bzero, 1)\leq {q}^{N}_{{\tilde{\pi}_c^{{N},*}}}({\bf0},{\bf0}, 1),
\end{equation*}
which from \eqref{eq:value-gap-1} implies that
\begin{equation*}
  v^N_{\pi_c}(\bzero, 1)\leq {q}^{N}_{{\tilde{\pi}_c^{{N},*}}}({\bf0},{\bf0}, 1)+\frac{2 r_{\max}H\tilde{C}}{N^{\log N-0.5}}.
\end{equation*}
Therefore, we conclude that
\begin{align}  \label{eq:gap-condition-I}
   \tilde{V}^N_{\tilde{\pi}^{N,*}}(\bx_{ini}, 1)- \tilde{V}^N_{\pi}(\x_{ini}, 1) = \frac{{v}^{{N}}_{\tilde{\pi}_c^{{N},*}}(\bzero, 1) - v^N_{\pi_c}(\bzero, 1)}{\sqrt{N}}\geq \frac{\epsilon-\frac{2 r_{\max}H\tilde{C}}{N^{\log N-0.5}}}{\sqrt{N}}\geq  \frac{\epsilon }{2\sqrt{N}}, 
\end{align}  
where the last inequality holds for any $N$ such that 
\begin{equation*}
  N^{\log N-0.5}\geq \frac{4 r_{\max}H\tilde{C}}{\epsilon}.
\end{equation*}

We now invoke Lemma \ref{lem:ed}, which is proved for policy $\tilde{\pi}^{N,*}$, and the observation that the same result holds for any policy $\pi\in\pisub$, because by definition $\pi$ is a Lipschitz policy so its value function is Lipschitz. We then have 
\begin{equation}\label{eq:gap-condition-II}
  \abs{\tilde{V}^N_{\tilde{\pi}^{N,*}}(\x_{ini}, 1) - V_{\tilde{\pi}^{N,*}}^N(\xini, 1)} = \tON,
\end{equation}
as well as
\begin{equation}\label{eq:gap-condition-III}
  \abs{\tilde{V}^N_{\pi}(\x_{ini}, 1) - V_{\pi}^N(\xini, 1)} = \tON.
\end{equation}

The claim of the theorem is then established by combining Equations \eqref{eq:gap-condition-I}, \eqref{eq:gap-condition-II} and \eqref{eq:gap-condition-III}.

\section{More details of numerical experiments}   \label{append:numerics-and-practical}

This appendix is a collection of more details on various numerical experiments conducted to confirm the theoretical results of this paper. It is structured as follows: 
\begin{itemize}
    \item Appendix~\ref{append:proportion-degenerate-and-uniqueness} studies how likely an RMAB instance is degenerate or satisfies the Uniqueness Assumption~\ref{ass:opt-lp-distance}.  
    \item Appendix~\ref{append:parameters-RMAB} displays the parameters and implementation details of the RMAB examples used in Section \ref{sec:numerical-experiments}. 
\end{itemize}

All numerical experiments in this paper were conducted on a personal laptop equipped with a 13th Gen Intel(R) i9-13980HX CPU. We note that our experiments are based on solving reasonable size linear programs, for which well-established solution packages are available. We implement the solutions using the Python package \textbf{PuLP} and the \textbf{Gurobi LP solver}.

\subsection{Degeneracy and Uniqueness Assumption~\ref{ass:opt-lp-distance} in RMAB instances}   \label{append:proportion-degenerate-and-uniqueness}

We conducted a numerical study on how likely a randomly generated RMAB instance is degenerate (Definition~\ref{def:non-degenerate-condition}) and the corresponding LP~\eqref{eq:problem-formulation-LP} satisfies the Uniqueness Assumption~\ref{ass:opt-lp-distance}. 

In this experiment, an RMAB instance is generated as follows: Each parameter is sampled via i.i.d.\ $\exp(1)$ and normalized properly as in the initial condition $\xini$ and in the transition kernels $\pp_h$. We fix $\alpha=0.4$ and $H=5$. We considered two scenarios: ``fully-dense'' where all entries of a transition kernel are positive; ``half-sparse'' where half of the entries of each row of a transition kernel are $0$. We varied the number of sizes per arm $S=5,10,15,20$ and tested the degeneracy and the Uniqueness Assumption~\ref{ass:opt-lp-distance} over $10,000$ such RMAB instances, and recorded the numbers in percentage. The results are presented in Table~\ref{tab:proportion-degenerate-and-uniqueness}.

\begin{table}[H]
    \centering
    \caption{Proportion of RMAB instances satisfying degeneracy and Uniqueness Assumption~\ref{ass:opt-lp-distance}}
    \label{tab:proportion-degenerate-and-uniqueness}
    \begin{tabular}{@{\hskip 2pt}p{0.25\linewidth}@{\hskip 2pt}p{0.12\linewidth}@{\hskip 3pt}p{0.22\linewidth}@{\hskip 2pt}p{0.4\linewidth}@{\hskip 2pt}}
        \toprule
        \textbf{Transition Kernel} & \textbf{$S$} & \textbf{Degenerate} & \textbf{Satisfy Uniqueness Assumption~\ref{ass:opt-lp-distance}} \\
        \midrule
        Fully-dense & 5  & 11.2\%  & 100\% \\
        Fully-dense & 10 & 8.7\%   & 100\% \\
        Fully-dense & 15 & 6.1\%   & 100\% \\
        Fully-dense & 20 & 5.1\%   & 100\% \\
        \midrule
        Half-sparse & 5  & 51.3\%  & 100\% \\
        Half-sparse & 10 & 33.3\%  & 100\% \\
        Half-sparse & 15 & 28.1\%  & 100\% \\
        Half-sparse & 20 & 20.3\%  & 100\% \\
        \bottomrule
    \end{tabular}
\end{table}

We note that overall, degenerate RMABs are a significant proportion among all instances, especially in the half-sparse setting. In addition, all these randomly generated RMABs satisfy the Uniqueness Assumption~\ref{ass:opt-lp-distance}.

\subsection{Parameters and implementation details of the RMAB examples used in Section \ref{sec:numerical-experiments}}  \label{append:parameters-RMAB}

The RMAB examples used in Section \ref{sec:numerical-experiments} model a machine maintenance problem. 
In these RMAB examples, we set $H = 5$ and $\alpha = 0.4$. We consider arms representing machines each described by a $10$ states MDP constructed from two distinct machine types, each having $5$ states. The initial state of an arm determines its machine type. Consequently, the transition kernels are structured into two block matrices of size $5$. Such a block-structured kernel enables the modeling of multiple machine types under the assumption of homogeneous arms. In other words, the homogeneous RMAB model can be used for heterogeneous systems with a finite number of types. All numerical parameters provided below are recorded with $4$ digits of precision. Note that we can add a sufficiently large common constant to all rewards to offset them, making the values non-negative, as per our reward convention discussed in the main paper.

\begin{align*}
  & \pp(\cdot \mid \cdot, 0) = \\
& \begin{bmatrix}
0.5415 & 0.4585 & 0 & 0 & 0 & 0 & 0 & 0 & 0 & 0 \\
0.5471 & 0.2265 & 0.2265 & 0 & 0 & 0 & 0 & 0 & 0 & 0 \\
0.7067 & 0 & 0.1467 & 0.1467 & 0 & 0 & 0 & 0 & 0 & 0 \\
0.8578 & 0 & 0 & 0.0711 & 0.0711 & 0 & 0 & 0 & 0 & 0 \\
0.9214 & 0 & 0 & 0 & 0.0786 & 0 & 0 & 0 & 0 & 0 \\
0 & 0 & 0 & 0 & 0 & 0.6396 & 0.3604 & 0 & 0 & 0 \\
0 & 0 & 0 & 0 & 0 & 0.5694 & 0.2153 & 0.2153 & 0 & 0 \\
0 & 0 & 0 & 0 & 0 & 0.6453 & 0 & 0.1773 & 0.1773 & 0 \\
0 & 0 & 0 & 0 & 0 & 0.7007 & 0 & 0 & 0.1496 & 0.1496 \\
0 & 0 & 0 & 0 & 0 & 0.7097 & 0 & 0 & 0 & 0.2903
\end{bmatrix},
\end{align*}

\begin{align*}
  & \pp(\cdot \mid \cdot, 1) = \\
& \begin{bmatrix}
1 & 0 & 0 & 0 & 0 & 0 & 0 & 0 & 0 & 0 \\
0.7337 & 0.2663 & 0 & 0 & 0 & 0 & 0 & 0 & 0 & 0 \\
0.7265 & 0 & 0.2735 & 0 & 0 & 0 & 0 & 0 & 0 & 0 \\
0.6146 & 0 & 0 & 0.3854 & 0 & 0 & 0 & 0 & 0 & 0 \\
0.6054 & 0 & 0 & 0 & 0.3946 & 0 & 0 & 0 & 0 & 0 \\
0 & 0 & 0 & 0 & 0 & 1 & 0 & 0 & 0 & 0 \\
0 & 0 & 0 & 0 & 0 & 0.6037 & 0.3963 & 0 & 0 & 0 \\
0 & 0 & 0 & 0 & 0 & 0.6004 & 0 & 0.3996 & 0 & 0 \\
0 & 0 & 0 & 0 & 0 & 0.7263 & 0 & 0 & 0.2737 & 0 \\
0 & 0 & 0 & 0 & 0 & 0.6138 & 0 & 0 & 0 & 0.3862
\end{bmatrix},
\end{align*}

\[
\bx_{\text{ini}} = \begin{bmatrix}
0 & 0.5 & 0 & 0 & 0 & 0 & 0.5 & 0 & 0 & 0
\end{bmatrix},
\]

\[
\br(\cdot,0) = \begin{bmatrix}
0 & -5.4707 & -7.0669 & -8.5784 & -9.2141 & 0 & -5.6942 & -6.4534 & -7.0074 & -7.097
\end{bmatrix}.
\]

In the first set of experiments, we use 
\begin{align*}
  & \br(\cdot,1) = \\
  & \begin{bmatrix}
-1.9963 & -2.085 & -2.035 & -2.0661 & -1.9581 & -1.994 & -1.9647 & -2.2478 & -2.0468 & -2.2821
\end{bmatrix},
\end{align*}
which results in a fluid LP with a unique optimal solution.
In the second set of experiments, we use 
\[
\br(\cdot,1) = \begin{bmatrix}
-2 & -2 & -2 & -2 & -2 & -2 & -2 & -2 & -2 & -2
\end{bmatrix},
\]
which leads to a fluid LP with multiple optimal solutions.

The LP-based policy we compared with is the LP-update policy (also referred to as the LP policy with resolving) proposed in \citep{Brown2020IndexPA, gast2023linear}. These studies reported that LP-update is one of the best-performing LP-based policies for finite-horizon RMABs. As mentioned earlier, we used EDDP \citep{lan2022complexity} to obtain the SP-based policy. The predefined policy in Line~12 of Algorithm~\ref{algo:SP-based} is specified as follows: If, during the execution of the algorithm, the $N$-system state deviates significantly from the initially selected optimal LP solution — where ``significant'' is defined as any coordinate exceeding a threshold of $20$ as defined by the policy class $\pic,$  a new LP is resolved using the current initial state of the $N$-system and the remaining horizon, similar to the LP-update/resolving policy, and then the first action from the new LP is applied.

\end{document}